\title
{Uncovering a graph}
\date{21 December, 2023 
(typeset \today{} \klockan)}   
\author{Svante Janson}
\thanks{Supported by the Knut and Alice Wallenberg Foundation}
\address{Department of Mathematics, Uppsala University, PO Box 480,
SE-751~06 Uppsala, Sweden}
\email{svante.janson@math.uu.se}
\newcommand\urladdrx[1]{{\urladdr{\def~{{\tiny$\sim$}}#1}}}
\subjclass[2020]{05C80, 60C05; 60F17}  
\numberwithin{equation}{section}
\renewcommand\le{\leqslant}
\renewcommand\ge{\geqslant}
\theoremstyle{plain}
\newtheorem{theorem}{Theorem}[section]
\newtheorem{lemma}[theorem]{Lemma}
\newtheorem{proposition}[theorem]{Proposition}
\newtheorem{corollary}[theorem]{Corollary}
\theoremstyle{definition}
\newcommand\xqed[1]{%
    \leavevmode\unskip\penalty9999 \hbox{}\nobreak\hfill
    \quad\hbox{#1}}
\newtheorem{exampleqqq}[theorem]{Example}
\newenvironment{example}{\begin{exampleqqq}}
  {\xqed{$\triangle$}\end{exampleqqq}}
\newtheorem{remarkqqq}[theorem]{Remark}
\newenvironment{remark}{\begin{remarkqqq}}
  {\xqed{$\triangle$}\end{remarkqqq}}
\newtheorem{problem}[theorem]{Problem}
\newtheorem*{ack}{Acknowledgement}
\theoremstyle{remark}
\newenvironment{romenumerate}[1][-10pt]{
\addtolength{\leftmargini}{#1}\begin{enumerate}
 }{\end{enumerate}}
\newcounter{oldenumi}
{\setcounter{oldenumi}{\value{enumi}}
\begin{romenumerate} \setcounter{enumi}{\value{oldenumi}}}
{\end{romenumerate}}
\newcounter{thmenumerate}
\newcounter{xenumerate}   
\newcommand\xfootnote[1]{\unskip\footnote{#1}$ $} 
\newcommand\pfitemx[1]{\par#1:}
\newcommand\pfitemref[1]{\pfitemx{\ref{#1}}}
\newcommand{\refT}[1]{Theorem~\ref{#1}}
\newcommand{\refTs}[1]{Theorems~\ref{#1}}
\newcommand{\refC}[1]{Corollary~\ref{#1}}
\newcommand{\refL}[1]{Lemma~\ref{#1}}
\newcommand{\refLs}[1]{Lemmas~\ref{#1}}
\newcommand{\refR}[1]{Remark~\ref{#1}}
\newcommand{\refS}[1]{Section~\ref{#1}}
\newcommand{\refSs}[1]{Sections~\ref{#1}}
\newcommand{\refSS}[1]{Section~\ref{#1}}
\newcommand{\refP}[1]{Proposition~\ref{#1}}
\newcommand{\refE}[1]{Example~\ref{#1}}
\newcommand{\refApp}[1]{Appendix~\ref{#1}}
\newcommand\marginal[1]{\marginpar[\raggedleft\tiny #1]{\raggedright\tiny#1}}
\newcommand\SJ{\marginal{SJ} }
\newcommand\REM[1]{{\raggedright\texttt{[#1]}\par\marginal{XXX}}}
\newcommand\XREM[1]{\relax}
\xdef\klockan{\the\count1.0\the\count255}
\xdef\klockan{\the\count1.\the\count255}\fi
\DeclareMathOperator*{\sumx}{\sum\nolimits^{*}}
\newcommand{\sumin}{\sum_{i=1}^n}
\newcommand\set[1]{\ensuremath{\{#1\}}}
\newcommand\bigset[1]{\ensuremath{\bigl\{#1\bigr\}}}
\newcommand\xpar[1]{(#1)}
\newcommand\bigpar[1]{\bigl(#1\bigr)}
\newcommand\Bigpar[1]{\Bigl(#1\Bigr)}
\newcommand\biggpar[1]{\biggl(#1\biggr)}
\newcommand\lrpar[1]{\left(#1\right)}
\newcommand\bigsqpar[1]{\bigl[#1\bigr]}
\newcommand\sqpar[1]{[#1]}
\newcommand\cpar[1]{\{#1\}}
\newcommand\abs[1]{\lvert#1\rvert}
\newcommand\bigabs[1]{\bigl\lvert#1\bigr\rvert}
\def\rompar(#1){\textup(#1\textup)}    
\def\xexp(#1){e^{#1}}
\newcommand\ceil[1]{\lceil#1\rceil}
\newcommand\floor[1]{\lfloor#1\rfloor}
\newcommand\setn{\set{1,\dots,n}}
\newcommand\ntoo{\ensuremath{{n\to\infty}}}
\newcommand\Ntoo{\ensuremath{{N\to\infty}}}
\newcommand\Mtoo{\ensuremath{{M\to\infty}}}
\newcommand\bmin{\land}
\newcommand\upto{\nearrow}
\newcommand\punkt{\xperiod}    
\newcommand\iid{i.i.d\punkt}    
\newcommand\ie{i.e\punkt}
\newcommand\eg{e.g\punkt}
\newcommand\cf{cf\punkt}
\newcommand{\as}{a.s\punkt}
\newcommand{\tend}{\longrightarrow}
\newcommand\dto{\overset{\mathrm{d}}{\tend}}
\newcommand\pto{\overset{\mathrm{p}}{\tend}}
\newcommand\eqd{\overset{\mathrm{d}}{=}}
\newcommand\op{o_{\mathrm p}}
\newcommand\Op{O_{\mathrm p}}
\newcommand\opx{\op^*}
\newcommand\Opx{\Op^*}
\newcommand\bbR{\mathbb R}
\newcommand\bbN{\mathbb N}
\newcounter{CC}
\newcounter{cc}
\newcommand\E{\operatorname{\mathbb E}{}}
\renewcommand\P{\operatorname{\mathbb P{}}}
\newcommand\Var{\operatorname{Var}{}}
\newcommand\Cov{\operatorname{Cov}}
\newcommand\Po{\operatorname{Po}}
\newcommand\Bin{\operatorname{Bin}}
\newcommand\Ge{\operatorname{Ge}}
\newcommand\ga{\alpha}
\newcommand\gb{\beta}
\newcommand\gd{\delta}
\newcommand\gD{\Delta}
\newcommand\gf{\varphi}
\newcommand\gl{\lambda}
\newcommand\gs{\sigma}
\newcommand\gS{\Sigma}
\newcommand\gU{\Upsilon}
\newcommand\eps{\varepsilon}
\renewcommand\phi{\xxx}  
\newcommand\cF{\mathcal F}
\newcommand\cG{\mathcal G}
\newcommand\cL{{\mathcal L}}
\newcommand\cP{\mathcal P}
\newcommand\cT{{\mathcal T}}
\newcommand\tM{\widetilde M}
\newcommand\tY{{\tilde Y}}
\newcommand\indic[1]{\boldsymbol1\cpar{#1}}
\newcommand\qw{^{-1}}
\newcommand\qww{^{-2}}
\newcommand\qq{^{1/2}}
\newcommand\qqw{^{-1/2}}
\newcommand\intot{\int_0^t}
\newcommand\oi{\ensuremath{[0,1]}}
\newcommand\oio{\ensuremath{[0,1)}}
\newcommand\ooo{[0,\infty)}
\newcommand\dd{\,\mathrm{d}}
\newcommand{\gsf}{$\gs$-field}
\newcommand\lhs{left-hand side}
\newcommand\rhs{right-hand side}
\newcommand\GW{Galton--Watson}
\newcommand\GWt{\GW{} tree}
\newcommand\cGWt{conditioned \GW{} tree}
\newcommand\Uoi{\mathsf{U}(0,1)}
\newcommand\xoo{_1^\infty}
\newcommand\nn{^{(n)}}
\newcommand\discrete[1]{\dot{#1}}
\newcommand\iB{\discrete{B}}
\newcommand\iK{\discrete{K}}
\newcommand\iL{\discrete{L}}
\newcommand\iT{\discrete{T}}
\newcommand\iW{\discrete{W}}
\newcommand\iX{\discrete{X}}
\newcommand\iY{\discrete{Y}}
\newcommand\iZ{\discrete{Z}}
\newcommand\igs{\discrete{\gs}}
\newcommand\bcZ{Z}
\newcommand\II{\widetilde I}
\newcommand\III{\widecheck{I}}
\newcommand\NN{\widetilde N}
\newcommand\QQ{\widetilde Q}
\newcommand\RR{\widetilde R}
\renewcommand\SS{\widetilde S}
\newcommand\TT{\widetilde T}
\newcommand\ZZ{\widetilde Z}
\newcommand\gSS{\widetilde \gS}
\newcommand\NNN{\widecheck N}
\newcommand\QQQ{\widecheck Q}
\newcommand\RRR{\widecheck R}
\newcommand\SSS{\widecheck S}
\newcommand\TTT{\widecheck T}
\newcommand\XXXX{\widecheck X}
\newcommand\YYY{\widecheck Y}
\newcommand\ZZZ{\widecheck Z}
\newcommand\gSSS{\widecheck \Sigma}
\newcommand\ctime{con\-tin\-u\-ous-time} 
\newcommand\dtime{dis\-crete-time}
\newcommand\BR{W^\circ}
\newcommand\fnt{{\floor{nt}}}
\newcommand\sK{\mathsf{K}}
\newcommand\sC{\mathsf{C}}
\newcommand\sP{\mathsf{P}}
\newcommand\sQ{\mathsf{Q}}
\newcommand\sR{\mathsf{R}}
\newcommand\sS{\mathsf{S}}
\newcommand\sN{\mathsf{N}}
\newcommand\tr{'}
\newcommand\MMM{M_1}
\newcommand\gamm{\gamma}
\newcommand\gamx{\gamma_*}
\newcommand\hd{\hat d}
\newcommand\hD{\widehat D}
\newcommand\hgD{\widehat \gD}
\newcommand\doo{d_*} 
\newcommand\dx{d_*}
\newcommand\chix{\chi_*}
\newcommand\hchi{\widehat\chi}
\newcommand\hchix{\widehat\chi_*}
\newcommand\bd{\bar d}
\newcommand\maxdx{\Delta}
\newcommand\pmaxdx{\maxdx}
\newcommand\nnq{\relax}
\newcommand\bt{\mathbf t}
\newcommand{\Polya}{P\'olya}
\newcommand\CS{Cauchy--Schwarz}
\newcommand\CSineq{\CS{} inequality}
\newcommand\ER{Erd\H os--R\'enyi}
\begin{document}

\begin{abstract} 
Uncover the vertices of a given graph, deterministic or random, in random order;
we consider both a discrete-time and a continuous-time version.
We study the evolution of the number of visible edges, and show convergence
after normalization to a Gaussian process. This problem was studied by
Hackl, Panholzer, and Wagner for the case when the graph is a random
labelled tree; we generalize their result to more general graphs, including
both other classes of random and non-random trees, and denser graphs.
The results are similar in all cases, but some differences can be seen
depending on the size of the average degree and of the variance of the
vertex degrees.
\end{abstract}

\maketitle

\section{Introduction}\label{S:intro}

Let $G$ be a (finite)
graph, deterministic or random, and uncover its vertices one by
one, in uniformly random order;
we say that a vertex becomes \emph{visible} when it is uncovered.
This yields a growing sequence of (random) induced subgraphs of $G$, and we are
interested in the evolution of this sequence.
In particular, we study in this paper
the evolution of the number of edges in these subgraphs,
regarded as a stochastic process.
More precisely, we consider 
a sequence of graphs $G_n$ with order $|G_n|=n$, 
and study the asymptotic behaviour of this stochastic process
as \ntoo{},
under suitable conditions.
(See \refS{Snot} for more details, and for definitions of notation used below.)
The methods extend to the number of other small subgraphs, see \refS{Ssmall}.

This question (among others)
was studied by \citet{HPW} for the case when $G$ is a 
random labelled tree. They showed that the stochastic process 
given by the number of visible edges, 
after suitable rescaling, 
converges to a continuous Gaussian process, which resembles a Brownian
bridge but with a somewhat different distribution; see \refE{Elabelledtree}.
Our main result 
is that this extends to a wide class of deterministic and
random trees and graphs, see \refS{Smain}.

\begin{remark}
  If $G$ is a random graph with vertex set $[n]=\setn$, we may alternatively
  consider uncovering the vertices in the given order.
(Actually, this is the formulation used in \cite{HPW}.)
For a random graph with a distribution that is invariant under permutations
of the vertices (and in particular for the random tree in \cite{HPW}), 
this is obviously equivalent to taking the vertices in
random order, and we will for convenience use only the formulation above.
\end{remark}

We will consider two versions of the problem. In the first, the vertices are
uncovered at fixed times (as in \cite{HPW}); 
in the second, they are uncovered at random times
which are independent for different vertices. The two versions are related
by a simple random change of time. 
We find it interesting to give results for both versions, 
and see their similarities and
differences under different conditions on the graphs $G_n$.

The second version means that at every given time $t$, each vertex is uncovered
with some probability $p=p(t)$ independently of all other vertices; in other
words, this is site percolation on the graph $G$, regarded as a stochastic
process where $p$ increases from 0 to 1.
There is a large literature on site percolation on various finite and
infinite graphs; much of it concerns global properties, but we do not know
any references studying the local properties studied here.

Our method of analysis is based on the second version, with random times.
(The method in \cite{HPW} for random labelled trees is very different, and
is based on a remarkable exact formula for a multivariate generating function.)
The main part of our proofs are done for the case when 
the graph  $G$ is deterministic and the uncovering times are random.
By standard methods we then randomize and get results for random $G$, and
also derandomize and get results for fixed uncovering times.

Our method is a variant of methods used since a long time for the study of
\ER{} random graphs. Recall that Erd\H os and R\'enyi in their seminal
papers \cite{ER1959,ER1960} on random graphs considered the sequence of graphs
obtained 
by uncovering the edges of the complete graph $\sK_n$ in random order;
the problem studied here is thus the ``dual'' vertex analogue 
(for an arbitrary graph $G$). As is well known, it is often easier to
consider the random time version of the \ER{} random graph process, where 
edges are added (or uncovered) at independent, uniform random times.
(This process was introduced by \citet{Ste1969,Ste1970}, although there with
exponential times.) See further e.g.\ \cite[p.~4]{JLR}.
We will here use a vertex version of a method used for these random graph
processes in \cite{SJ79,SJ94}; the method is based on a martingale limit
theorem for \ctime{} martingales by \citet{JS}.

Notation and some other preliminaries are given in \refS{Snot}.
The main theorems are stated in \refS{Smain}.
A number of examples are given in \refS{Sex}, both for their own sake and to
illustrate various features of the results. 
Proofs are given in \refSs{Sprel2}--\ref{Spf2};
\refS{Sprel2} contain further preliminaries: \refS{Spf1} contains the basic
technical work including a decomposition of the \ctime{} process 
using some martingales that are defined and studied there.
The proofs are then completed in a rather straightforward manner in \refS{Spf2}.
\refS{Scomp} gives,
as a corollary, 
for the case of trees a result on the number of components in
the visible subgraph.
\refS{Ssmall} briefly discusses extensions to the number of other small
subgraphs.

The appendices give some background results used in the main part of the paper,
for which we have not found any references; the results in the appendices
are stated in rather general forms for future reference.
\refApp{AA} shows how results for the \dtime{} version can be obtained from
\ctime{}   result.
\refApp{Atrees} shows results on vertex degrees for some random trees that are
used in examples in \refS{Sex}.

\begin{ack}
  I thank Stephan Wagner for interesting discussions. 
\end{ack}

\section{Notation and preliminaries}\label{Snot}
\subsection{General (mainly standard) notation}\label{Snot1}
We denote the  size (number of elements) of a (finite) set $A$
by $|A|$.

If $H$ and $G$ are graphs, then $\hom(H,G)$ denotes the number of homomorphisms
$H\to G$, i.e., the number of (labelled, not necessarily induced) 
copies of $H$ in $G$.

$\sC_n$ is a cycle with $n$ vertices, $\sP_n$ is a path with $n$ vertices,
$\sK_{\ell ,m}$ is a complete bipartite graph with $\ell+m$ vertices.

$C$ denotes unspecified constants that may vary from one occurrence to the
next. 

Unspecified limits are as \ntoo.

We use standard $O$ and $o$ notation; furthermore, $a_n\ll b_n$ means the same
as $a_n=o(b_n)$.

We use $\pto$ for convergence in probability, and $\dto$ for convergence in
distribution of random variables.
Moreover,
if $X_n$ is a sequence of random variables
and $a_n$ is a sequence of positive numbers, 
then $X_n=\op(a_n)$ means that
$X_n/a_n\pto0$ as \ntoo,
and
$X_n=\Op(a_n)$ means that 
for every $\eps>0$, there exists $C<\infty$ such that
$\P\bigpar{|X_n|>Ca_n}<\eps$
for all $n$.
(This is called that $X_n/a_n$ is \emph{bounded in probability} or
\emph{tight}.) 
For stochastic processes $(X_n(t))_{t\in J}$ defined on some interval $J$, we
write $X_n(t)=\opx(a_n)$ and $X_n(t)=\Opx(a_n)$ when
$\sup_{t\in J}|X_n(t)|=\op(a_n)$ and $\sup_{t\in J}|X_n(t)|=\Op(a_n)$,
respectively.

$\cL(X)$ denotes the distribution of a random variable $X$.

$\BR(t)$ denotes a Brownian bridge, 
i.e., a continuous Gaussian process on
$\oi$ with mean 0 and covariance function 
\begin{align}\label{brb}
  \Cov\bigpar{\BR(s),\BR(t)}=
s(1-t),
\qquad 0\le s\le t\le 1.
\end{align}

For typographical reasons, we write vectors as row vectors.
The transpose of $v$ is denoted $v'$.
The covariance matrix $\Cov(X)$ of a  random vector $X$, for simplicity
assumed centred, is thus 
$\E\xpar{XX'}$, and similarly
$\Cov(X,Y)=\E\xpar{XY'}$
for two centred random vectors $X$ and $Y$.

\subsection{Notation for our problem}
Let $G$ be a deterministic or random graph  
with vertex set $V(G)=[n]=\setn$ and edge set
$E=E(G)$. 
(Thus the number of vertices is $n$ and the number of edges is $|E|$.)
As usual, we denote (potential) edges by $ij$, where $i,j\in[n]$ 
(with $i\neq j$)
are the
endpoints. We sometimes write $i\sim j$ instead of $ij\in E$.

In the first version  (\dtime) of our problem,
we uncover the vertices in uniformly random order
as $v_1,\dots,v_n$; we say that vertex $v_k$ becomes visible at time $k$,
and we let $\iL_k$ be the number of edges visible at time $k$
(meaning that both endpoints are visible),
\xfootnote{To distinguish the two versions,
we use a dot in our notation  $\iL$ 
for the \dtime{} process, and also later 
for \ctime{} limits $\iZ$ of such processes, 
and other quantities related to $\iL$.
}
\ie,
\begin{align}\label{a1}
  \iL_k:=\abs{\bigset{(i,j):1\le i<j\le k\text{ and } v_iv_j\in E}},
\qquad 0\le k\le n.
\end{align}

In the second version (random times), we instead give each vertex $i$ a
random time $T_i$ when it becomes visible;
we assume that $T_1,\dots,T_n$ are independent and have the uniform
distribution  $\Uoi$. We let $L(t)$ be the number of edges visible at time
$t$, \ie,
\begin{align}\label{a2}
  L(t):=\abs{\bigset{ij\in E: T_i\le t \text{ and } T_j\le t}},
\qquad 0\le t\le 1.
\end{align}

In the case when $G$ is a random graph, we assume that the random permutation
$v_1,\dots,v_n$ and the random times $T_1,\dots,T_n$ are independent of $G$.

We note that there is a natural coupling of the two versions.
The random times $T_1,\dots,T_n$ are \as{} distinct, and we will tacitly assume
in the sequel that this is the case.
We may then let $v_k$ be the $k$th vertex that becomes visible; this yields
a uniformly random permutation of the vertices as required above.
We will always assume that we have coupled the two versions in this way.

To express this coupling in formulas, let
\begin{align}\label{nt}
  N(t):=\sumin \indic{T_i\le t},
\qquad 0\le t\le 1,
\end{align}
\ie, the number of vertices visible at time $t$.
Furthermore, let 
\begin{align}\label{tauk}
\tau_k:=\inf\set{t:N(t)\ge k} ,
\qquad k=1,\dots,n, 
\end{align}
\ie, the time
when the $k$th vertex becomes visible. 
Then
\begin{align}\label{a5}
  \iL_k = L(\tau_k).
\end{align}
We will do most of the analysis for $L(t)$, and then use \eqref{a5} to derive
corresponding results for $\iL_k$.

We introduce some further notation.
We  denote the degree of vertex $i$ by $d_i$, and 
let as usual
\begin{align}
  \gD:=\max_{1\le i\le n}d_i.
\end{align}
(Although we will for emphasis also write $\max_i d_i$ sometimes.)
We further define
\begin{align}
  \bd&:=\frac{1}{n}\sumin d_i,\label{bd}
\\\label{chi}
\chi&:=\frac{1}{n}\sumin d_i^2,
\end{align}
\ie, the first and second moments of the degree
of a randomly
chosen vertex in $G$.

In particular,
we note that
\begin{align}\label{di}
|E|=\frac12\sumin d_i = \frac{n\bd}2.
\end{align}

Our theorems are stated as limits for a sequence $G\nn$ of graphs as above,
with $V(G\nn)=[n]$. We then add a superscript ${}\nn$ to the notation for
all variables relating to $G\nn$; however, this may be omitted
when it is clear from the context.

\subsection{The Skorohod topology}\label{SSSkor}

We state our main results as convergence of (\ctime) stochastic processes,
defined on $\oi$.
In the proofs we will also show auxiliary results with convergence of
stochastic processes defined on the half-open interval $\oio$.
All our \ctime{} stochastic processes will be right-continuous with left
limits everywhere; such functions are often called \emph{c\`adl\`ag}.

We denote left limits by $f(t-):=\lim_{s\upto t} f(s)$,
and jump sizes by $\gD f(t):=f(t)-f(t-)$.

In general, for any interval $J\subseteq\bbR$, let $D(J)$ be the space of
{c\`adl\`ag} functions $f:J\to\bbR$.
We equip $D(J)$, as usual, with the \emph{Skorohod topology};
a general definition is given in \cite[\S2]{SJ94} but is a bit technical,
and for our purposes it suffices to note that the topology is Polish 
(i.e., can be defined by a separable and complete metric), 
and that if $f_n,f\in D(J)$ ($n\in\bbN$) and $f$ is continuous, then
$f_n\to f$ in $D(J)$ (\ie, in the Skorohod topology) if and only if $f_n\to
f$ uniformly on every compact subset of $J$.
(All limits considered below will be continuous; thus the Skorohod topology
can be seen as a substitute for the uniform topology, which is non-separable
and has technical problems with measurability, see \cite[\S18]{Billingsley}.)
See also \eg{} \cite{Billingsley, JS, Kallenberg} for details. (These
references treat only $J=\oi$ or $J=\ooo$; the latter is equivalent to $\oio$
by a change of time).

More generally, we may also define the space $D(J)$ for vector-valued
functions; this enables us to talk about joint convergence in $D(J)$ of
several processes.

Note that convergence in $D\oi$ is substantially stronger than
convergence in $D\oio$.
We will use both. When nothing is said explicitly, 
we mean convergence in $D\oi$.

\section{Main results}\label{Smain}

We state our main results in this section. Proofs are given in \refS{Spf2}.
We use the notation in \refS{Snot}; in particular, recall that
$E\nn:=E(G\nn)$.
We state the results in three different theorems, with different conditions
on the vertex degrees. Actually, the first two theorems (\refTs{TA} and
\ref{TB})
are special cases of
the third theorem (\refT{TC}), but we have chosen to present (and prove)
them separately, in order to illustrate different features of the results
(and proofs); this also gives slightly simpler statements of the first two
theorems.

We begin with the sparse case, with $|E\nn|=O(n)$.
This includes the random labelled tree studied in \cite{HPW}.
Moreover, the sparse case is some sense the most interesting case,
where (as we will see below) 
different contributions to the result turn out to be of the same
order, and therefore interact.

\begin{theorem}\label{TA}
 Let $G\nn$ be a sequence of deterministic or random graphs with 
$V(G\nn)=[n]$.
Assume also that for some (non-random) constants $\dx,\chix\in\ooo$, 
we have, as \ntoo,
\begin{gather}
\bd\nn:=
\frac{1}{n}\sumin d\nn_i=\frac{2|E\nn|}{n}\pto\dx,\label{ta1}
\\
\chi\nn:=
\frac{1}{n}\sumin \bigpar{d\nn_i}^2\pto\chix,\label{ta2}
\\
n\qqw\max_i d\nn_i \pto0.\label{ta3}
\end{gather}
\begin{romenumerate}

\item \label{TAd}
Then, in $D\oi$,
\begin{align}\label{tad}
  n\qqw\bigpar{\iL\nn_\fnt-t^2|E\nn|}
\dto
\iZ(t),
\end{align}
where $\iZ(t)$ is a continuous Gaussian process on $\oi$ with $\E \iZ(t)=0$
and covariance function,
for $ 0\le s\le t\le1$,
\begin{align}\label{tad2}
 \Cov\bigpar{\iZ(s),\iZ(t)}
=
\igs(s,t):=
\frac{\dx}{2}{s^2}{(1-t)^2} + \gamx s^2t(1-t)
,\end{align}
where $\gamx:={\chix-\dx^2}$.

\item \label{TAc}
Similarly, in $D\oi$,
\begin{align}\label{tac}
  n\qqw\bigpar{L\nn(t)-t^2|E\nn|}
\dto
Z(t),
\end{align}
where $Z(t)$ is a continuous Gaussian process on $\oi$ with $\E Z(t)=0$
and covariance function,
for $ 0\le s\le t\le1$,
\begin{align}\label{tac2}
 \Cov\bigpar{Z(s),Z(t)}
=
\gs(s,t):=
\frac{\dx}{2}{s^2}{(1-t)^2} + \chix s^2t(1-t)
.\end{align}
\end{romenumerate}
\end{theorem}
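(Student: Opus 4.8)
The plan is to prove the continuous-time statement \ref{TAc} first, for \emph{deterministic} $G\nn$, and then to obtain the random-$G$ case by conditioning and the \dtime{} statement \ref{TAd} by de-randomization. Write $X_i(t):=\indic{T_i\le t}$ and $\tX_i(t):=X_i(t)-t$, so that $\E X_i(t)=t$ and $\E L(t)=t^2|E|$ \emph{exactly}; thus $t^2|E|$ is the correct centring in the \ctime{} version. Expanding $X_iX_j=(\tX_i+t)(\tX_j+t)$ and summing over $ij\in E$ gives the exact identity $L(t)-t^2|E|=W(t)+tU(t)$, where $W(t):=\sum_{ij\in E}\tX_i(t)\tX_j(t)$ and $U(t):=\sum_i d_i\tX_i(t)$. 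The key observation is that $Y_i(t):=\tX_i(t)/(1-t)=(X_i(t)-t)/(1-t)$ is, for each $i$, a martingale on $\oio$ in the filtration generated by the uncovering, and that the $Y_i$ are independent. Since distinct vertices are uncovered at distinct times, $Y_i$ and $Y_j$ have no common jumps, so their quadratic covariation vanishes and integration by parts shows that both $\cW(t):=\sum_{ij\in E}Y_i(t)Y_j(t)$ and $\tU(t):=\sum_i d_iY_i(t)$ are martingales on $\oio$; moreover $W=(1-t)^2\cW$ and $tU=t(1-t)\tU$.

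Next I would apply the functional central limit theorem for \ctime{} martingales (the version of \cite{JS} used in \cite{SJ94}) to the vector martingale $n\qqw(\cW,\tU)$ on each interval $[0,1-\eps]$, that is, in $D\oio$. This requires (a) convergence in probability of the predictable (co)variations and (b) asymptotic negligibility of the jumps. A direct computation gives $\E\langle Y_i\rangle_t=t/(1-t)$, whence $\langle n\qqw\tU\rangle_t\pto\chix\,t/(1-t)$ and $\langle n\qqw\cW\rangle_t\pto(\dx/2)\,t^2/(1-t)^2$, while the predictable cross-variation tends to $0$; here \eqref{ta1} and \eqref{ta2} fix the limiting coefficients, and the concentration of these variations around their means, together with the negligibility of the maximal jump (the jump of $\tU$ at $T_i$ equals $d_i/(1-T_i)$), both follow from \eqref{ta3}. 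The limit is therefore a continuous Gaussian martingale, and applying the (time-dependent, deterministic) linear map $(w,u)\mapsto(1-t)^2w+t(1-t)u$ gives $n\qqw(L(t)-t^2|E|)\dto Z(t)$ in $D\oio$ with covariance \eqref{tac2}. Since the limit is continuous and pinned at $Z(1)=0$ (because $L(1)-|E|=0$ identically), I would upgrade $D\oio$-convergence to $D\oi$-convergence by a separate tightness estimate near $t=1$, bounding $n\qqw\sup_{t\ge1-\eps}|L(t)-t^2|E||$, for which a general extension result in the appendices is available.

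For random $G\nn$ satisfying \eqref{ta1}--\eqref{ta3} in probability, I would condition on $G\nn$: the conditional law converges to the Gaussian limit whenever the (now deterministic) sequence satisfies the hypotheses, and since this limit depends on $G\nn$ only through $\bd\pto\dx$ and $\chi\pto\chix$, a standard conditioning/mixing argument promotes this to unconditional convergence. Finally, for \ref{TAd} I would use the coupling $\iL_k=L(\tau_k)$ of \eqref{a5}, where $\tau_k$ inverts the counting process $N$ of \eqref{nt}. Since $N(t)/n\pto t$ uniformly, $\tau_\fnt\to t$ and $\iL\nn_\fnt$ is a time-change of $L\nn$; passing to joint convergence of $n\qqw(L\nn(t)-t^2|E\nn|)$ and $n\qqw(N(t)-nt)$ and carrying out the change of time (as in \refApp{AA}) shows that fixing the number of visible vertices removes exactly the fluctuation of $N$, which contributed variance $\dx^2s^2t(1-t)$. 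This converts the coefficient $\chix$ in \eqref{tac2} into $\gamx=\chix-\dx^2$ in \eqref{tad2}.

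The main obstacle is steps (a)--(b) for the quadratic part $\cW$: identifying and proving the in-probability convergence of its predictable quadratic variation, and verifying the jump (conditional Lindeberg) condition, where \eqref{ta3} is precisely what is needed. A secondary difficulty is the bookkeeping in the de-randomization that produces the $-\dx^2$ correction, together with the passage from $D\oio$ to $D\oi$.
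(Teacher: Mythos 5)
Your proposal is correct and follows essentially the same route as the paper: the exact decomposition $L(t)-t^2|E|=\QQ(t)+t\SS(t)$ with the normalized martingales $\III_i(t)=(I_i(t)-t)/(1-t)$, the Jacod--Shiryaev martingale functional CLT applied to the vector $(\QQQ,\SSS,\NNN)$ on $\oio$ with exactly the limiting (co)variations you state, the upgrade to $D\oi$ via a Doob-type estimate near $t=1$, conditioning for random $G\nn$, and the time-change via \refApp{AA} producing the $-\dx^2s^2t(1-t)$ correction. The only cosmetic difference is that the paper verifies the FCLT hypotheses through the optional quadratic covariations $[\cdot,\cdot]_t$ (expectation converging, variance tending to zero, the latter controlled by \eqref{ta3}) rather than a predictable-variation-plus-Lindeberg formulation, but these amount to the same verification.
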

The condition \eqref{ta3} on the maximum degree is necessary,
see \refE{Ebad1}.

\begin{remark}\label{Rdeg}
  Note that the \lhs{s} of \eqref{ta1} and \eqref{ta2} are the first and
  second moments of the degree distribution in $G\nn$; thus our
  assumptions say that these moments are asymptotically $\dx$ and $\chix$,
  respectively; as a consequence the constant $\gamx$ in \eqref{tad2}
  is the asymptotic variance of the degree distribution.
\end{remark}

A special case is when $G\nn$ is a tree (as in \cite{HPW}).
Then $|E\nn|=n-1$, and thus
\eqref{ta1} always holds with $\dx=2$, so we only have to verify
\eqref{ta2} and \eqref{ta3}; we state this as a corollary.

\begin{corollary}\label{CA}
\;  Let\/ $G\nn$ be a sequence of deterministic or random trees with\/
  $V(G\nn)=[n]$. Assume also that \eqref{ta2} and \eqref{ta3} hold.
Then \eqref{tad}--\eqref{tad2} and \eqref{tac}--\eqref{tac2} hold, with
$\dx=2$, $\gamx=\chix-4$, and $|E\nn|=n-1$ (which may be replaced by $n$).
\end{corollary}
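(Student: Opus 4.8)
The plan is to read this off directly from \refT{TA} by checking that its three hypotheses hold in the tree setting. The one thing not yet assumed is \eqref{ta1}. But any tree on the vertex set $[n]$ has exactly $n-1$ edges, so by \eqref{di} we have
\begin{align}
\bd\nn=\frac{2|E\nn|}{n}=\frac{2(n-1)}{n}\longrightarrow 2 ,\notag
\end{align}
and this holds deterministically (hence trivially in probability) even when $G\nn$ is random, since $|E\nn|=n-1$ is then a.s.\ constant. Thus \eqref{ta1} holds with $\dx=2$. Combined with the assumed \eqref{ta2} and \eqref{ta3}, all hypotheses of \refT{TA} are satisfied, and applying the theorem yields \eqref{tad}--\eqref{tad2} and \eqref{tac}--\eqref{tac2} with $\dx=2$ and $\gamx=\chix-\dx^2=\chix-4$, as claimed.

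The only point requiring a (one-line) argument is the final remark that the centering $t^2|E\nn|$ may be replaced by $t^2n$. First I would note that the two centerings differ by
\begin{align}
n\qqw\bigpar{t^2 n - t^2|E\nn|}=t^2 n\qqw ,\notag
\end{align}
a deterministic function of $t$ that tends to $0$ uniformly on $\oi$ as \ntoo, hence to $0$ in $D\oi$ (indeed in the uniform, and thus the Skorohod, topology). Since the limit process in each of \eqref{tad} and \eqref{tac} is continuous, adding this vanishing deterministic shift to the left-hand side does not change the distributional limit; this is a routine application of Slutsky's lemma in $D\oi$. I do not expect any genuine obstacle here: the corollary is a pure specialization of \refT{TA}, the substance of the work having already been carried out in the theorem itself.
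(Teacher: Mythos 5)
Your proposal is correct and matches the paper's own (one-line) justification: the paper simply observes that a tree has $|E\nn|=n-1$ edges, so \eqref{ta1} holds automatically with $\dx=2$, and then invokes \refT{TA}. Your additional remark that the centering $t^2|E\nn|$ may be replaced by $t^2n$ because the difference is the deterministic term $t^2n\qqw\to0$ uniformly is also exactly the right (and only needed) observation.
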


\begin{remark}\label{R3}
We see from \eqref{tad2}, and in more detail from the proof in \refS{Spf2},
that the limit process $\iZ(t)$ in \eqref{tad} 
can be regarded as consisting of two components: 
the second term in $\igs(s,t)$ comes from the
randomness of the degrees of the vertices that are visible 
and first term comes from additional 
randomness in the structure of the visible subgraph.
In continuous time, the last term in $\gs(s,t)$ in \eqref{tac2} includes
also a term $\dx^2s^2t(1-t)$ coming
from the randomness of the number of visible vertices, which contributes a
third component to the limit.
More precisely,
the proofs show that for finite $n$, we can decompose the processes 
$\iL\nn_\fnt-t^2|E\nn|$ and $L\nn(t)-t^2|E\nn|$ into two or three components 
(+ smaller error terms)
with the origins just described.
Note that in the sparse case these three contributions to the processes 
are of the same order, unlike in other cases
discussed below; this makes the sparse case more complicated, and therefore
is a sense more interesting, than more dense cases.
\end{remark}

We next consider regular graphs. The statement below includes both the
sparse case and denser cases. 
We state the regular case separately, since
the case of regular graphs is special, and
somewhat simpler than others, because there is no randomness in the degrees
of the visible vertices, and thus one of the three contributions 
discussed in \refR{R3} disappears.
The sparse case (with the degree $d\nn$ bounded, which is essentially
equivalent to a constant degree $d\nn$) is a special case of \refT{TA}, but
the conclusions are written in a somewhat different (but equivalent) form.
In denser cases, with $d\nn\to\infty$, note that the normalizing factors in
\eqref{tbd} and \eqref{tbdii} are of different orders.
This is because in the \ctime{} version, the third contribution discussed
in \refR{R3} (which does not appear for the \dtime{} version) 
is of larger order than the others, and thus dominates the limit.
This also means that in the dense case,
the limit for the \ctime{} version (\refT{TB}\ref{TB3}) is rather
uninteresting and determined solely by the number of visible vertices. 

\begin{theorem}\label{TB}\SJ
 Let $G\nn$ be a sequence of deterministic or random graphs with 
$V(G\nn)=[n]$. 
Assume also that each $G\nn$ is regular, with (non-random) degree $d\nn\ge1$,
and that $d\nn=o(n)$.
\begin{romenumerate}
\item\label{TB1}   
Then, in $D\oi$,
\begin{align}\label{tbd}
  (nd\nn)\qqw\bigpar{\iL\nn_\fnt-t^2|E\nn|}
\dto
\iZ(t),
\end{align}
where $\iZ(t)$ is a continuous Gaussian process on $\oi$ with $\E \iZ(t)=0$
and covariance function,
for $ 0\le s\le t\le1$,
\begin{align}\label{tbd2}
 \Cov\bigpar{\iZ(s),\iZ(t)}
=
\igs(s,t):=
\frac{1}{2}{s^2}{(1-t)^2} 
.\end{align}

\item \label{TB2}
If furthermore $d\nn\to \doo\le\infty$, then,
in $D\oi$,
\begin{align}\label{tbdii}
  (n\qq d\nn)\qw\bigpar{L\nn(t)-t^2|E\nn|}
\dto
Z(t),
\end{align}
where $Z(t)$ is a continuous Gaussian process on $\oi$ with $\E Z(t)=0$
and covariance function,
for $ 0\le s\le t\le1$,
\begin{align}\label{tbdii2}
 \Cov\bigpar{Z(s),Z(t)}
=
\gs(s,t):=
\frac{1}{2\doo}{s^2}{(1-t)^2} +s^2t(1-t)
.\end{align}

\item \label{TB3}
In particular, if $d\nn\to\infty$, then
\eqref{tbdii} holds with
$Z(t)=t\BR(t)$ for a Brownian bridge $\BR(t)$,
and thus,
for $ 0\le s\le t\le1$,
\begin{align}\label{tbdii3}
 \Cov\bigpar{Z(s),Z(t)}
=s^2t(1-t)
.\end{align}
\end{romenumerate}
\end{theorem}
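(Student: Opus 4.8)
For a regular graph with degree $d=d\nn$ we have $\bd\nn=d$, $\chi\nn=d^2$ and $|E\nn|=nd/2$. If $d\nn$ stays bounded, then hypotheses \eqref{ta1}--\eqref{ta3} of \refT{TA} hold with $\dx=d$, $\chix=d^2$ and hence $\gamx=0$, so parts \ref{TB1} and \ref{TB2} reduce to \refT{TA} by matching normalizations: since $(nd)\qqw=d\qqw n\qqw$, the rescaled limit $d\qqw\iZ$ of \eqref{tad} has covariance $d\qw\igs(s,t)=\tfrac12 s^2(1-t)^2$, which is \eqref{tbd2}; and since $(n\qq d)\qw=d\qw n\qqw$, the rescaled limit $d\qw Z$ of \eqref{tac} has covariance $d\qww\gs(s,t)=\tfrac1{2\doo}s^2(1-t)^2+s^2t(1-t)$ with $\doo=d$, which is \eqref{tbdii2}. (For \ref{TB1} with a bounded but non-convergent $d\nn$ one passes to subsequences along which $d\nn$ is constant; the limit is unchanged since \eqref{tbd2} does not involve $\doo$.) The genuinely new case, which \refT{TA} does not cover, is $d\nn\to\infty$ (permitted because we assume only $d\nn=o(n)$); this is the focus below.

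For $d\nn\to\infty$ I would work first in continuous time. Put $X_i(t):=\indic{T_i\le t}$ and $\bar X_i(t):=X_i(t)-t$; the $X_i(t)$ are independent $\Be(t)$ variables. Expanding $X_iX_j=(t+\bar X_i)(t+\bar X_j)$, summing over edges, and using regularity (so that $\sum_{ij\in E}(\bar X_i+\bar X_j)=\sum_i d_i\bar X_i=d(N(t)-nt)$) gives the exact centred decomposition
\begin{align}\label{TBdecomp}
  L\nn(t)-t^2|E\nn|
  &= d\,t\bigpar{N(t)-nt}+R(t),
  \qquad R(t):=\sum_{ij\in E\nn}\bar X_i(t)\bar X_j(t).
\end{align}
Splitting the double sum for $\Cov\bigpar{L(s),L(t)}$ according to whether two edges coincide, share a vertex, or are disjoint yields, for $s\le t$, $\Cov\bigpar{L(s),L(t)}=\tfrac{nd}{2}s^2(1-t^2)+nd(d-1)s^2t(1-t)$, and dividing by $nd^2$ and reorganizing reproduces the covariance in \eqref{tbdii2}. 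The same bookkeeping gives $\Var R(t)=|E\nn|\,t^2(1-t)^2=\tfrac{nd}{2}t^2(1-t)^2$ (only coinciding edges contribute to $R$, since a shared vertex leaves an uncorrelated centred factor).

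Dividing \eqref{TBdecomp} by $n\qq d$, the first term becomes $t\,(N(t)-nt)/n\qq$, and Donsker's theorem for the empirical process of the i.i.d.\ uniform times gives $(N(t)-nt)/n\qq\dto\BR(t)$ in $D\oi$; meanwhile $\Var\bigpar{R(t)/(n\qq d)}=t^2(1-t)^2/(2d)\to0$, so with a maximal inequality $R(t)/(n\qq d)=\opx(1)$. Hence the limit is $Z(t)=t\BR(t)$, with covariance $s^2t(1-t)$: this is \ref{TB3}, and also \ref{TB2} in the subcase $\doo=\infty$. For the \dtime{} statement \ref{TB1} I would use the coupling $\iL\nn_k=L\nn(\tau_k)$ of \eqref{a5} together with the transfer result of \refApp{AA}: re-indexing by the number of visible vertices makes the dominant fluctuation $d\,t(N(t)-nt)$ cancel against the correction coming from the time change $t\mapsto\tau_{\fnt}$, leaving only the intrinsic term $R$, whose fluctuations are of order $(nd)\qq$ and yield the covariance \eqref{tbd2}.

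The main obstacle is the functional central limit theorem, at the scale $(nd)\qq$, for the intrinsic process $R$ (equivalently for $\iL\nn_\fnt$) when $d\nn\to\infty$: one must establish asymptotic normality of the finite-dimensional distributions together with tightness in $D\oi$ for a quadratic form in the independent indicators $X_i$, whose successive increments (the number of already-visible neighbours of a newly uncovered vertex) can be as large as $d$. This is precisely where the \ctime{} martingale functional central limit theorem developed in \refS{Spf1} is used, and where the hypothesis $d\nn=o(n)$ enters, namely to verify that the normalized jumps are uniformly negligible, i.e.\ a Lindeberg-type condition.
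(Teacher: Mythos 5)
Your overall strategy coincides with the paper's: your decomposition \eqref{TBdecomp} is exactly the paper's \eqref{b2a} specialized to regular graphs (your $R$ is the paper's $\QQ\nn$, and regularity gives $\SS\nn=d\nn\NN\nn$), the covariance bookkeeping for \eqref{tbdii2} and the estimate $\Var R(t)=|E\nn|\,t^2(1-t)^2$ are correct, the reduction of the bounded-degree case to \refT{TA} is legitimate, and your identification of the joint functional limit theorem for $(R,N)$ at scale $(nd\nn)\qq$ as the substantive step --- handled by \refP{P:JS} applied to the martingales $\QQQ\nn,\NNN\nn$ --- is precisely the content of the paper's \refL{LB}. (The claim $R(t)/(n\qq d\nn)=\opx(1)$ does need care near $t=1$, where one uses Doob's inequality on dyadic blocks as in \refL{Loi}, but your ``maximal inequality'' remark points in the right direction.)

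The one step that does not work as written is the derivation of part \ref{TB1} for $d\nn\to\infty$ ``together with the transfer result of \refApp{AA}''. \refT{TN} requires as hypothesis that the \emph{continuous-time} process converge at the same normalization $a_n$ as the target discrete-time result; at the scale $a_n=(nd\nn)\qqw$ this hypothesis fails when $d\nn\to\infty$, since the term $t\,d\nn\NN\nn(t)$ is of order $n\qq d\nn\gg(nd\nn)\qq$. Nor can the cancellation you describe be extracted from \refT{TN} as stated: its proof Taylor-expands $f(\tau\nn_\fnt)$ to first order with an error $o(b_nn\qqw)=o(n\qq d\nn)$, which is not $o\bigpar{(nd\nn)\qq}$. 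The repair is to recenter by a quantity that becomes deterministic under the time change: the exact identity $L\nn(t)-\frac{d\nn}{2n}N\nn(t)^2=\QQ\nn(t)-\frac{d\nn}{2n}\NN\nn(t)^2$ gives, after substituting $t=\tau\nn_k$, $\iL\nn_k-\frac{d\nn}{2n}k^2=\QQ\nn(\tau\nn_k)-\frac{d\nn}{2n}\NN\nn(\tau\nn_k)^2$, whose second term is $\Op(d\nn)=\op\bigpar{(nd\nn)\qq}$ because $d\nn=o(n)$; one then applies \refT{TN} with $b_n=f=c=0$ (or substitutes $\tau\nn_\fnt$ directly) and finally replaces $\frac{d\nn}{2n}\fnt^2$ by $t^2|E\nn|$ at the cost of another $O(d\nn)=o\bigpar{(nd\nn)\qq}$. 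This is exactly how the paper completes part \ref{TB1}; with that correction your argument goes through.
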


The condition $d\nn=o(n)$ is necessary in \refT{TB}, at least for part
\ref{TB1}, 
see \refE{Ebad1}; see also \refE{Ebad2} where this condition is violated and
a non-normal limit appears.

Finally, we give a more general version.
It is easily seen that \refTs{TA} and \ref{TB} are special cases, with
$\gb_n=n\qq$ and $\gb_n=(nd\nn)\qq$, respectively.
We see also that the sizes of the first two contributions discussed in
\refR{R3} are 
governed by $\gl_1$ and $\gl_2$ in \eqref{tc1}--\eqref{tc2};
any of these may vanish (see \refE{EC}), and then only the other contributes
to the limit for the \dtime{} version.
Similarly, for the \ctime{} version, the third contribution is governed by
$\ga$.
We will see in \refE{EC} that more or less arbitrary combinations of
$\gl_1$, $\gl_2$, and $\ga$ may occur. 
(However, see \refR{Rga} below.)
Hence, different combinations of the
three components discussed in \refR{R3} may dominate in different examples.
In particular, in dense cases, 
for the \ctime{} version
we typically have $\ga=\infty$, 
and then
(\refT{TC}\ref{TCcb}) we have, as
in the regular case, a rather uninteresting limit determined by the number of
visible vertices, which dominates the contributions coming from the structure
of the visible subgraph. Here, however, the condition for this is a little
more complicated.

\begin{theorem}\label{TC}
 Let $G\nn$ be a sequence of deterministic or random graphs with 
$V(G\nn)=[n]$, and let $\gb_n$ be a sequence of positive constants
with $\gb_n=o(n)$.
Assume also that for some (non-random) constants $\gl_1,\gl_2\in\ooo$, 
we have, as \ntoo,
\begin{gather}
\frac{2|E\nn|}{\gb_n^2}=\frac{n\bd\nn}{\gb_n^2}\pto\gl_1,\label{tc1}
\\
\frac{1}{\gb_n^2}\sumin \bigpar{d\nn_i-\bd\nn}^2\pto\gl_2,\label{tc2}
\\
\gb_n\qw\max_i d\nn_i \pto0.\label{tc3}
\end{gather}
\begin{romenumerate}

\item \label{TCd}
Then, in $D\oi$,
\begin{align}\label{tcd}
\gb_n\qw\bigpar{\iL\nn_\fnt-t^2|E\nn|}
\dto
\iZ(t),
\end{align}
where $\iZ(t)$ is a continuous Gaussian process on $\oi$ with $\E \iZ(t)=0$
and covariance function,
for $ 0\le s\le t\le1$,
\begin{align}\label{tcd2}
 \Cov\bigpar{\iZ(s),\iZ(t)}
=
\igs(s,t):=
\frac{\gl_1}{2}{s^2}{(1-t)^2}+
\gl_2s^2t(1-t).
\end{align}

\item \label{TCc}
Suppose further that, for some non-random constant 
$\ga\in[0,\infty]$,
\begin{align}\label{tc4}
 n\qq\bd\nn/\gb_n\pto \ga
.\end{align}
\begin{enumerate}
\item\label{TCca} 
If\/ $0\le \ga<\infty$, 
then, in $D\oi$,
\begin{align}\label{tcc}
  \gb_n\qw\bigpar{L\nn(t)-t^2|E\nn|}
\dto
Z(t),
\end{align}
where $Z(t)$ is a continuous Gaussian process on $\oi$ with $\E Z(t)=0$
and covariance function,
for $ 0\le s\le t\le1$,
\begin{align}\label{tcc2}
 \Cov\bigpar{Z(s),Z(t)}
=
\gs(s,t):=
\frac{\gl_1}{2}{s^2}{(1-t)^2}+
(\gl_2+\ga^2)s^2t(1-t).
\end{align}
\item \label{TCcb}
If\/ $\ga=\infty$, then, 
in $D\oi$,
\begin{align}\label{tcdii}
  (n\qq \bd\nn)\qw\bigpar{L\nn(t)-t^2|E\nn|}
\dto
Z(t),
\end{align}
where 
$Z(t)=t\BR(t)$ for a Brownian bridge $\BR(t)$,
and thus, 
for $ 0\le s\le t\le1$,
\begin{align}\label{tcdii2}
 \Cov\bigpar{Z(s),Z(t)}
=
\gs(s,t):=
s^2t(1-t)
.\end{align}
\end{enumerate}
\end{romenumerate}
\end{theorem}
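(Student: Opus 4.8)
The plan is to prove the \ctime{} statement for deterministic $G\nn$ first, and then obtain the random-graph case by conditioning and the \dtime{} case from the time-change coupling \eqref{a5}. Since the uncovering is independent of $G$, I would condition on $G\nn$ and prove the statement for every deterministic sequence for which the \lhs{s} of \eqref{tc1}--\eqref{tc4} converge to $\gl_1,\gl_2,\ga$ as genuine numerical limits; passing back to random $G\nn$, where these hold only in probability, is then a routine conditioning/subsequence argument. The algebraic heart is an exact decomposition: writing $\xi_i(t):=\indic{T_i\le t}$ and $\eta_i(t):=\xi_i(t)-t$, the $\eta_i$ are independent centred processes with $\Cov\bigpar{\eta_i(s),\eta_i(t)}=s(1-t)$ for $s\le t$, and since $\sum_{ij\in E}(\eta_i+\eta_j)=\sumin d_i\eta_i$,
\[
L(t)-t^2|E|=t\sumin d_i\eta_i(t)+\sum_{ij\in E}\eta_i(t)\eta_j(t)=:A(t)+B(t).
\]
Splitting further $A=A_2+A_3$ with $A_3(t):=t\bd\,(N(t)-nt)$ and $A_2(t):=t\sumin(d_i-\bd)\eta_i(t)$, the three pieces $A_3,A_2,B$ are mutually uncorrelated (first and second chaos are orthogonal, and $\sumin(d_i-\bd)=0$), and a second-moment computation gives exactly the covariances in \eqref{tcc2}, with prefactors $n\bd^2/\gb_n^2\to\ga^2$, $\gb_n^{-2}\sumin(d_i-\bd)^2\to\gl_2$ and $|E|/\gb_n^2\to\gl_1/2$.

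To promote these second-moment identities to convergence of processes, I would realise each piece as a deterministic time-transform of a martingale. Let $M_i(t):=\xi_i(t)-\int_0^t\indic{T_i>s}(1-s)\qw\dd s$ be the compensated indicator martingale. The identity $\indic{T_i>s}(1-s)\qw-1=-\eta_i(s)(1-s)\qw$ turns the definition of $\sumin w_i\eta_i$ into the linear equation $\sumin w_i\eta_i(t)=\sumin w_iM_i(t)-\int_0^t(1-s)\qw\sumin w_i\eta_i(s)\dd s$, which the integrating factor $(1-t)\qw$ solves as
\[
\sumin w_i\eta_i(t)=(1-t)\int_0^t(1-s)\qw\dd\Bigpar{\sumin w_iM_i}(s).
\]
Applying the same calculation to $B$ (using that $\eta_i$ and $\eta_j$ never jump together) gives $B(t)=B^\sharp(t)-\int_0^t 2(1-s)\qw B(s)\dd s$ with $B^\sharp(t):=\int_0^t\sumin\bigpar{\sum_{j\sim i}\eta_j(s-)}\dd M_i(s)$, whence $B(t)=(1-t)^2\int_0^t(1-s)^{-2}\dd B^\sharp(s)$. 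Thus $A_3$, $A_2$, $B$ are the deterministic factors $t(1-t)$, $t(1-t)$, $(1-t)^2$ times martingales driven by the $M_i$, and I would apply the multivariate functional martingale central limit theorem of \cite{JS} to the vector of these transformed martingales, normalised by $\gb_n$.

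The hypotheses enter precisely at this step. From $\langle M_i\rangle(t)=\int_0^t\indic{T_i>s}(1-s)\qw\dd s$ one computes, after the $(1-s)^{-k}$ weighting and a law-of-large-numbers/variance estimate over $i$, that the predictable brackets of the three normalised transformed martingales converge in probability to $\ga^2\,t(1-t)\qw$, $\gl_2\,t(1-t)\qw$ and $\tfrac{\gl_1}{2}\,t^2(1-t)^{-2}$, with all cross-brackets negligible; multiplying back by the deterministic factors reproduces \eqref{tcc2}. The jump (Lindeberg) condition of \cite{JS} asks that the normalised jumps be uniformly small, and this is exactly where \eqref{tc3} and $\ga<\infty$ (hence $\bd=o(\gb_n)$) are needed: on each compact subinterval of $\oio$ every jump is bounded by $C\gD=o(\gb_n)$. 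This yields the joint convergence, with asymptotically independent Gaussian limits, first in $D\oio$; I would then upgrade to $D\oi$ by controlling the endpoint, using monotonicity of $L$ together with $L(1)=|E|$ to bound $\sup_{t\in[1-\eps,1]}\bigabs{L(t)-t^2|E|}$.

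Part~\ref{TCd} I would then read off from the \ctime{} result via the coupling \eqref{a5}, $\iL_{\fnt}=L(\tau_{\fnt})$, as carried out in general in \refApp{AA}. Because $N(\tau_{\fnt})=\fnt$ is deterministic, composing with the inverse time change kills exactly the $N$-driven component $A_3$: the fluctuation of $\tau_{\fnt}$ enters through $|E|\bigpar{\tau_{\fnt}^2-t^2}$ and, after normalisation, contributes $-\ga\,t\BR(t)$, which cancels the $+\ga\,t\BR(t)$ carried by $A_3$, leaving the covariance \eqref{tcd2} without the $\ga^2$ term. For part~\ref{TCcb} ($\ga=\infty$) I would instead normalise the \ctime{} process by $n\qq\bd$; then $A_2$ and $B$ become negligible and only $A_3/(n\qq\bd)=t\,(N(t)-nt)/n\qq\to t\BR(t)$ survives, giving \eqref{tcdii2}. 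I expect the genuine obstacle to be the structural term $B$: showing that its transformed martingale $B^\sharp$ has the asserted bracket and satisfies the Lindeberg condition — so that the limit is Gaussian and not a mixture of squared Gaussians — is the crux of the argument, and is exactly what makes the maximum-degree hypothesis \eqref{tc3} indispensable.
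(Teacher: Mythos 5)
Your route is essentially the paper's own: the decomposition $L(t)-t^2|E|=\QQ(t)+t\RR(t)+t\bd\NN(t)$ (your $B$, $A_2$, $A_3$), the same martingales up to notation (your $(1-t)\qw\sum_iw_i\eta_i(t)$ and $(1-t)^{-2}B(t)$ are exactly the paper's $\RRR,\NNN$ and $\QQQ$, obtained there more directly from the observation that $(I_i(t)-t)/(1-t)$ is itself a martingale, rather than via compensated counting processes and an integrating factor), the same martingale functional CLT from Jacod--Shiryaev, and the same time-change device for part (i) and the randomization/conditioning reductions. The bracket computations and the role of \eqref{tc3} in keeping the second-chaos term Gaussian are correctly identified, though the variance estimates for the bracket of $B^\sharp$ (the paper's Lemmas \ref{LV} and \ref{LU}, which reduce $\Var[\QQQ,\QQQ]_t$ to $\hom(\sC_4,G)\le\sum_id_i^3$) are the real technical core and are only gestured at.

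One step, however, would fail as written: the upgrade from $D\oio$ to $D\oi$. You propose to bound $\sup_{t\in[1-\eps,1]}\bigabs{L(t)-t^2|E|}$ using monotonicity of $L$ and $L(1)=|E|$. Monotonicity only gives, for $t\in[u,1]$,
\begin{align*}
L(u)-|E|\;\le\; L(t)-t^2|E|\;\le\;(1-u^2)|E|,
\end{align*}
so the resulting bound on the supremum is of order $(1-u)|E|$; after dividing by $\gb_n$ this is of order $(1-u)\gl_1\gb_n/2\to\infty$ for every fixed $u<1$ whenever $\gl_1>0$. The deterministic drift $t^2|E|$ moves by far more than $\gb_n$ on $[u,1]$, so endpoint values of $L$ cannot control the fluctuation there; a maximal inequality is required. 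This is precisely what the paper's \refL{Loi} supplies: Doob's inequality on dyadic blocks $[1-2^{-N},1-2^{-N-1}]$ applied to $\QQQ,\RRR,\NNN$, using $\E\abs{\QQQ(t)}^2=O((1-t)^{-2})$ etc., after which the deterministic prefactors $(1-t)^2$ and $t(1-t)$ beat the blow-up. Since you already have $A_2,A_3,B$ as deterministic multiples of martingales, this fix is available inside your framework, but the monotonicity argument must be replaced. A smaller point: part \ref{TCd} is asserted without assuming \eqref{tc4}, so the cancellation of the $N$-driven component cannot be argued as ``$-\ga t\BR(t)$ cancels $+\ga t\BR(t)$'' (which presumes $\ga$ exists and is finite); it must be performed exactly at finite $n$, as the paper does by first centering $L$ at $\frac{\bd}{2n}N(t)^2$ and only then substituting $t=\tau_{\fnt}$.
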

The condition \eqref{tc3} on the maximum degree is necessary for
\refT{TC}\ref{TCd} and \ref{TCca}, see \refE{Ebad1}.

\begin{remark}\label{Rga}
The conditions  \eqref{tc1} and \eqref{tc4} imply 
\begin{align}\label{rga}
\bd\nn\pto\ga^2/\gl_1
\end{align}
unless $\ga=\gl_1=0$.
Hence,
we can have $\gl_1>0$ and $\ga=0$ only when $\bd\nn\pto0$,
which is a rather extreme (and perhaps less interesting) case when
necessarily most vertices are isolated. 

Furthermore,  
the condition \eqref{tc2} may also be written
\begin{align}
  \label{tc2b}
\frac{1}{\gb_n^2}\Bigpar{\sumin \xpar{d\nn_i}^2-n\xpar{\bd\nn}^2}\pto\gl_2.
\end{align}
In particular,
when $\bd\nn\pto0$,  and assuming \eqref{tc1},
\eqref{tc2} is equivalent to
\begin{align}
  \label{tc2x}
\frac{1}{\gb_n^2}\sumin \xpar{d\nn_i}^2\pto\gl_2.
\end{align}
It follows that in the case $\bd\nn\pto0$, we must have $\gl_2\ge\gl_1$.
In particular, if $\gl_1>0$ and $\ga=0$, then $\gl_2\ge\gl_1>0$.
\end{remark}

\begin{remark}\label{RBrown}
  The limit processes $\iZ(t)$ and $Z(t)$ in the theorems above 
all are centred Gaussian, with covariance functions of the type
$\Cov\xpar{Z(s),Z(t)}=s^2\gf(t)$ 
($0\le s\le t\le 1$),
where $\gf(t)=a(1-t)^2+bt(1-t)$ for some $a,b\ge0$.
As noted in a special case in \cite{HPW}, this means that they can be
represented as
\begin{align}
  \gf(t)W\bigpar{t^2/\gf(t)},
\qquad 0\le t<1,
\end{align}
where $W(t)$ is a standard Brownian motion (Wiener process).
\end{remark}

\begin{remark}\label{Rinterpol}
For the \dtime{} version $\iL_k$, we thus extend it to continuous time by
considering $\iL_{\floor t}$, $t\in[0,n]$; we then scale time to $\oi$
and consider $\iL_\fnt$ in our theorems.
As is well known from many other problems,
an alternative 
would be to extend $\iL_k$ to $[0,n]$ by linear
interpolation to a continuous process $\iL_t$ as in \cite{HPW}; 
it follows immediately that the limit results above
hold also if we replace $\iL_\fnt$ by $\iL_{nt}$;
moreover, then the results
could be stated as
convergence (after rescaling) of $\iL_{nt}$
in the space $C\oi$ of continuous functions on $\oi$,
since for continuous processes,
convergence in $C\oi$ is equivalent to the convergence in $D\oi$
considered in the present paper. 
(We have chosen to use $D\oi$ and the formulations 
above with $\iL_\fnt$, at least partly because 
it is convenient to use 
discontinuous processes in our proofs.)
\end{remark}

\section{Examples}\label{Sex}
In our first examples, $G\nn$ is a tree, so we can use \refC{CA}.

\begin{example}\label{Elabelledtree}
  Let us first revisit the case studied by \citet{HPW}, where $G\nn$ is a
 random labelled tree. 
It is well known that the asymptotic degree distribution is 
$1+\Po(1)$,
and it is easily seen that also all moments converge 
(given $G\nn$, in probability),
see \refR{RGWr}.
In particular, \eqref{ta2} holds with $\chix=\E(\xi+1)^2=5$, 
and \eqref{ta3} holds as a consequence of the convergence of the third moment
of the degree distribution, or by the argument in \refSS{SSGW}.
(In fact, more strongly,
$\gD\nn=\op(\log n)$, and a very precise result is known, 
see \cite{MM1991} and \cite[Remark 3.14]{Drmota}.)
See also \refE{EGW} for a generalization.
Consequently, \refC{CA} 
shows that \eqref{tad}--\eqref{tac2} hold, with
$\gamx=\chix-4=1$; thus 
the limits $Z(t)$ and $\iZ(t)$ are centred Gaussian processes with
covariance functions
\begin{align}\label{elab1}
  \Cov\bigpar{\iZ(s),\iZ(t)}&=s^2(1-t)^2+s^2t(1-t)
=s^2(1-t),
\\\label{elab2}
  \Cov\bigpar{Z(s),Z(t)}&=s^2(1-t)^2+5s^2t(1-t).
\end{align}
The limit $\iZ(t)$ with covariance function \eqref{elab1} was found by
\cite[Theorem 3]{HPW}, which inspired the present work.
\end{example}

\begin{example}\label{EGW}
  More generally, let $G\nn$ be a \cGWt{} with $n$ vertices,
defined by conditioning a \GWt{} $\cT$ with offspring
  distributed as a random variable $\xi$ with values in \set{0,1,\dots},
see \refApp{SSGW} and \eg{} \cite{SJ264}. 

Assume that $\E\xi=1$ and $0<\Var\xi<\infty$.
Then the asymptotic outdegree distribution is given by $\xi$
\cite[Theorem 7.11]{SJ264},
and the
asymptotic degree distribution is thus $1+\xi$.
We will verify in \refApp{SSGW}
that \eqref{ta2} and \eqref{ta3} hold, with 
\begin{align}\label{egw1}
\chix&=\E(\xi+1)^2=\Var\xi+4,
\end{align}
\refC{CA} thus applies and yields \eqref{tad}--\eqref{tac2}, with
\begin{align}\label{egw2}
\gamx&=\chix-4=\Var\xi.
\end{align}

Some well known examples of \cGWt{s} are
\begin{romenumerate}
\item 
The  random labelled tree in
\refE{Elabelledtree}, with
$\xi\sim\Po(1)$ and $\gamx=1$.
\item 
The
random binary tree, with 
$\xi\sim\Bin(2,\frac12)$ and $\gamx=\frac12$.
\item 
The random ordered (plane) tree, with
$\xi\sim\Ge(1/2)$ and $\gamx=2$.
\end{romenumerate}
See 
\eg{}  \cite{AldousII}, \cite{Devroye1998}, \cite{Drmota}, 
and \cite[Section 10]{SJ264},
where also further examples are given,
\end{example}

\begin{example}\label{EBST}
  Let $G\nn$ be a random binary search tree.
All outdegrees are 0, 1, or 2, and thus all degrees are 1, 2, or 3.
The proportion of vertices of each type tends in probability to $1/3$,
see \eg{} \cite[Theorem 2]{Devroye1991},
\cite[Section 3.3]{Aldous-fringe}, 
and \cite[Example 6.2]{SJ306}.
Since there is only a finite number of possible vertex degrees, 
this implies immediately that 
\eqref{ta2} holds with $\chix=(1+4+9)/3=14/3$. 
Furthermore, \eqref{ta3} is trivial.
Consequently, \refC{CA} shows that \eqref{tad}--\eqref{tac2} hold, 
with $\gamx=\chix-4=2/3$.
\end{example}

\begin{example}\label{ERRT}
Let $G\nn$ be a random recursive tree.
The asymptotic outdegree distribution is geometric $\Ge(1/2)$,
just as for random ordered trees in \refE{EGW},
see \eg{} 
\cite{MM1988}, 
\cite[Section 3.2]{Aldous-fringe}, 
\cite[Theorem 1]{SJ155},
\cite[Theorem 6.8]{Drmota},
and 
\cite[Example 6.1]{SJ306}.
Furthermore,
if $\xi\in\Ge(1/2)$, then
\eqref{ta2} and \eqref{ta3} hold, with $\chix=\E(\xi+1)^2=\Var\xi+4=6$;
see \refApp{SSRRT} for a detailed verification.
(In fact, $\gD\nn=\Op(\log n)$, see \cite[Theorem 6.12]{Drmota} for a
precise result.)
Consequently, \refC{CA} applies, with $\gamx =\Var\xi=2$,
and yields \eqref{tad}--\eqref{tac2} with exactly the same limits as for 
the random ordered tree in \refE{EGW}.
(This coincidence is thus because the  two types of random trees have the same 
asymptotic degree distribution. In other respects, the trees are quite
different.)
\end{example}

\begin{example}\label{EafH}
Let $G\nn=\sP_n$, a path of length $n$. (Thus $G\nn$ is non-random.)
This case was studied in \cite{SJ198}, in the analysis of a problem by
\cite{afH} which we briefly discuss in \refS{Scomp}.
It is obvious that \eqref{ta1}--\eqref{ta3} hold, with $\dx=2$ and
$\chix=4$.
Hence, \refT{TA} (or \refC{CA}) applies and shows \eqref{tad}--\eqref{tac2},
with  $\gamx=\chix-\dx^2=0$.

Alternatively, let $G\nn=\sC_n$, a cycle of length $n$.
This differs from $\sP_n$ only in a single edge, and thus $\iL_k$ and $L(t)$
differ by at most 1 between the two graphs; consequently, we have the same
limit results for $\sP_n$ and $\sC_n$.
Indeed,
\refT{TB} applies to $\sC_n$ with $\dx=2$, which gives the same results as
just obtained for $\sP_n$ (although written with somewhat different
normalizations). 
\end{example}

\begin{example}\label{Egnm}
Let $G\nn$ be the random graph $G(n,m_n)$, where
$m_n$ are given with $1\ll m_n \ll \binom n2$.
In this case, our problem is essentially trivial, since by definition,
$G(n,m_n)$ has exactly $m_n$ edges, which form a uniformly random subset of
size $m_n$ in the set of all $\binom n2$ possible edges.
By symmetry, we may as well uncover the vertices in order $1,2,\dots$, and
then $\iL\nn_k$ is the number of edges seen in the first $\binom k2$ possible
positions (in the order the possible edge positions are uncovered).
This number thus has a hypergeometric distribution. Moreover, 
the functional limit theorem 
for sampling from a finite population 
\cite[Theorem 24.1]{Billingsley} 
implies easily that, with
$p_n:=m_n/\binom n2\to0$, in $D\oi$,
\begin{align}\label{gnm1}
  \frac{1}{\sqrt{\binom n2 p_n(1-p_n)}}
\Bigpar{\iL\nn_\fnt - \frac{\fnt(\fnt-1)}{n(n-1)} m_n}
\dto \BR(t^2)
\end{align}
and thus
\begin{align}\label{gnm2}
  \frac{1}{\sqrt{m_n}}
\bigpar{\iL\nn_\fnt - t^2 m_n}
\dto \BR(t^2).
\end{align}
The limit process is thus a time-changed Brownian bridge.

As an illustration of our results, we show how this  also follows from
the theorems above, more precisely \refT{TC}. (If
$m_n=\Theta(n)$, we may also use \refT{TA}.)
We take $\gb_n:=\sqrt{m_n}$. We have $\bd\nn=2m_n/n$, and thus \eqref{tc1}
holds trivially with $\gl_1=2$.
We see also that \eqref{tc4} holds with $\ga=2\gl\qq$ if 
$m_n/n\to \gl\in[0,\infty]$.
We claim that \eqref{tc2} holds with $\gl_2=2$, and that \eqref{tc3} holds.
Then \refT{TC} applies, and \eqref{tcd2} yields
\begin{align}
  \igs(s,t)=s^2(1-t)^2+2s^2t(1-t)=s^2(1-t^2),
\qquad 0\le s\le t\le 1,
\end{align}
    which shows that $\iZ(t)\eqd \BR(t^2)$ (as processes). Hence, \eqref{tcd}
    yields \eqref{gnm2}. The corresponding result for $L\nn(t)$ is given by
    \eqref{tcc} or \eqref{tcdii}, depending on $\ga$.

To verify the claims, let $I_{ij}$ be the indicator that there is an edge
$ij$ in $G\nn$. Then
\begin{align}\label{gnm4}
  \sumin d_i^2=\sumin d_i +\sumin d_i(d_i-1) 
= 2m_n + \sumx_{i,j,k}I_{ij}I_{ik},
\end{align}
where $\sumx$ denotes the sum over distinct $i,j,k$.
With our choice $\gb_n=\sqrt{m_n}$,
the condition \eqref{tc2}, in the form \eqref{tc2b}, 
with $\gl_2=2$ is thus equivalent to
\begin{align}\label{gnm5}
m_n\qw\sumx_{i,j,k}I_{ij}I_{ik}-\frac{n}{m_n}(\bd\nn)^2
=
m_n\qw\sumx_{i,j,k}I_{ij}I_{ik}-\frac{4m_n}n
\pto0.  
\end{align}
Denote the triple sum in \eqref{gnm4} and \eqref{gnm5} by $V$.
Note that $V$ is twice the number of copies of $\sP_3$ in $G(n,m_n)$;
the results we need are thus closely related to result on subgraph counts
(for the special case $\sP_3$) in $G(n,m_n)$, and it seems possible 
(at least for some ranges of $m_n$) to derive
what we need from known (and more advanced) such results, 
see \eg{} 
\cite[Theorems 2a--2b]{ER1960},
\cite{Rucinski} (for $G(n,p)$) and 
\cite[Theorem 19]{SJ94},
but we find it easier to show it  directly
by calculating moments.
We have (writing $m=m_n$)
\begin{align}\label{gam6}
  \E V &= n(n-1)(n-2)\P(I_{12}I_{13}=1)
=n(n-1)(n-2)\frac{m}{\binom n2}\frac{m-1}{\binom n2 -1}
\notag\\&
=\frac{4m(m-1)}{n+1}
=\frac{4m^2}{n}+o(m).
\end{align}
A straightforward calculation, which we omit, shows that
$  \Var V = o(m^2)$. Hence,
\begin{align}\label{gam7}
V=\E V + \op(m)  = \frac{4m^2}{n}+\op(m),
\end{align}
which shows \eqref{gnm5} and thus \eqref{tc2}.
Finally, a similar calculation yields
\begin{align}
\E \sumin\Bigpar{d_i(d_i-1)-\frac{\E V}{n}}^2
=\sumin \Var\Bigpar{\sumx_{j,k}I_{ij}I_{ik}}
=o(m^2).
\end{align}
Hence, using also \eqref{gam6} and our assumption $m\ll\binom n2$,
\begin{align}
\gD(\gD-1)
\le\frac{\E V}{n} +
\lrpar{  \sumin\Bigpar{d_i(d_i-1)-\frac{\E V}{n}}^2}\qq
=o(m)+\op(m),
\end{align}
which yields $\gD=\op(m\qq)$ and thus \eqref{tc3}, completing the
verification that \refT{TC} applies and yields \eqref{gnm2}.
\end{example}

\begin{example}\label{Egnp}
Let $G\nn$ be the random graph $G(n,p_n)$, where
$p_n\in(0,1)$ and we assume $n^2p_n\to\infty$ and $p_n\to0$.
Thus, each possible edge appears with probability $p_n$, independently of
each other.

This example is, as the closely related preceding example,
essentially trivial.
When we have uncovered $k$ vertices, the visible subgraph is a
random graph $G(k,p_n)$. Hence, $\iL\nn_k\sim\Bin\bigpar{\binom k2,p_n}$,
and, under our conditions on $p_n$, 
it follows from a version of Donsker's theorem for triangular arrays (easily
proved by \refP{P:JS} below) that
\begin{align}\label{gnp1}
\bigpar{\tbinom n2 p_n}\qqw
\Bigpar{\iL\nn_\fnt-t^2\binom{n}2p_n}\dto W(t^2)
\qquad\text{in $D\oi$},
\end{align}
where $W(t)$ is a Brownian motion on $\oi$. 
Since $E\nn=\iL\nn_n$, it follows that
\begin{align}\label{gnp2}
\bigpar{\tbinom n2 p_n}\qqw
\Bigpar{\iL\nn_\fnt-t^2|E\nn|}\dto W(t^2)-t^2 W(1) = \BR(t^2)
.\end{align}
This follows also from \eqref{gnm2} by conditioning on $|E\nn|$.

As in \refE{Egnm}, we can also see this as an example of \refT{TC}.
If we choose $\gb_n:=(\binom n2 p_n)\qq$, then
\eqref{tc1}--\eqref{tc3} hold with $\gl_1=\gl_2=2$;
this follows from the same result for $G(n,m_n)$ in \refE{Egnm} by
conditioning on $|E\nn|$, or by similar calculations in $G(n,p_n)$.
Hence, \refT{TC} yields \eqref{gnp2}.
In the special case $p_n=\gl/n$, with $\gl\in(0,\infty)$,
we may also use \refT{TA}.
\end{example}

\begin{example}
  \label{EC}
We may construct different examples of \refT{TC} by choosing vertex degrees
$d\nn_1,\dots,d\nn_n$ and then, for example, taking $G\nn$ to be a random
graph with the given degrees. 
(As is well known, $G\nn$ can be constructed by the configuration model,
conditioned to be simple. Of course, we have to choose the degrees such that
a simple graph $G\nn$ exists; in particular we need $\sum_i d\nn_i$ to be even.)

Note that examples of \refT{TC} with $\gl_1>0$, $\gl_2=0$, 
and either $0<\ga<\infty$ or $\ga=\infty$
(see \eqref{rga})
are provided by regular graphs as in \refT{TB},  
and that examples with $\gl_1>0$, $\gl_2>0$, and $0<\ga<\infty$ 
are provided by many instances of
\refT{TA}, see the examples above.

Another interesting case is to choose the degrees such that the average degree
is roughly constant, more precisely $\bd\nn\to\dx$ for some $\dx\in(0,\infty)$,
but the variance of the degrees
\begin{align}
  \gamm\nn:=\frac{1}{n}\sumin \bigpar{d\nn_i-\bd\nn}^2
=\chi\nn-(\bd\nn)^2 \to\infty.
\end{align}
For example, let $\gd_n\to0$ with $n\gd_n\to\infty$, and let all degrees
$d\nn_i$ be 2 except the first $n\gd_n+O(1)$ which are $\floor{1/\gd_n}$.
Then, choosing $\gb_n:=\sqrt{n\gamm\nn}$,
\refT{TC} holds with $\gl_1=0$, $\gl_2=1$, and $\ga=0$. Hence, in this case
the second component in \refR{R3} dominates both the others.
As a result, we have the same covariance in \eqref{tcd2} and \eqref{tcc2},
and thus the same limit in \eqref{tcd} and \eqref{tcc}.

We may also construct an example with $\gl_1>0$ and $\ga=0$, 
which by \refR{Rga} implies $\bd\nn\pto0$ and $\gl_2\ge\gl_1>0$.
We may simply let $G_n$ by the cycle $\sC_{m_n}$ plus $n-m_n$ isolated vertices,
where $m_n\to\infty$ with $m_n=o(n)$, and take $\gb_n:=\sqrt{m_n}$; we omit
the simple verifications.

Finally, for given sequences $a_n$ and $b_n$ of positive integers with
$b_n\le a_n$,
let $n$ be even and let $n/2$ vertices have degree $a_n+b_n$ and the other $n/2$
degree $a_n-b_n$.
Choose $\gb_n:=\sqrt n b_n$; thus \eqref{tc2} holds with $\gl_2=1$.
Let $b_n$ be any sequence such that 
$1\ll b_n \ll \sqrt n$; it is the easy to verify
that in the following three cases,
the conditions \eqref{tc1}--\eqref{tc3} and \eqref{tc4} hold with the
stated parameters:
\begin{romenumerate}
 
\item 
If $a_n=b_n^2$, then $\gl_1=1$, $\gl_2=1$, and  $\ga=\infty$.
\item 
If $a_n=b_n$, then $\gl_1=0$, $\gl_2=1$, and $\ga=1$.
\item 
If $b_n \ll a_n \ll b_n^2$, then $\gl_1=0$, $\gl_2=1$, and $\ga=\infty$.
\end{romenumerate}
\end{example}

\begin{example}\label{Ebad1}
  It is easy to see that condition \eqref{ta3} on the maximum degree
is necessary in \refT{TA}.
In fact, suppose that for some fixed $\gd>0$, there is a vertex, say $i=1$,
with $d_1\ge \gd n\qq$. Since vertex 1 and its neighbours become visible in
uniformly random order, $\gD L(T_1)$ is uniformly distributed on
\set{0,\dots,d_1}; hence, with probability $\ge 1/2$, we have
$\gD L(T_1) \ge d_1/2\ge (\gd/2) n\qq$.
Consequently, $n\qqw L(t)$ has with large probability a macroscopic jump 
(at $T_1$), and thus it cannot converge in distribution to a continuous
process. The same applies to $n\qqw\iL_k$.

The same argument shows that $d\nn=o(n)$ is necessary for \eqref{tbd} in
\refT{TB}, and that \eqref{tc3} is necessary for \eqref{tcd} and \eqref{tcc}
in \refT{TC}.
\end{example}

\begin{remark}
In spite of \refE{Ebad1},
it is for some graphs $G\nn$
possible to obtain a continuous limit in \eqref{tbd}
or \eqref{tcd} even if $d\nn=o(n)$ or $\gD\nn=o(\gb_n)$ fails,
provided we use
linear interpolation of $\iL_k$ as in \refR{Rinterpol} instead of
$\iL_\fnt$. We give one example in \refE{Ebad2}, and note that the limit
obtained there is \emph{not} Gaussian. 
We have not investigated this possibility any further, but it seems that it
reguires $G\nn$ to be regular or almost regular (in a suitable sense);
moreover, we conjecture that limits always will be as in \refE{Ebad2}, and
thus not normal.
\end{remark}

\begin{example}\label{Ebad2}
Let $G\nn=\sK_{n/2,n/2}$, the symmetric complete bipartite graph, where we
assume that $n$ is even. Then $G\nn$ is regular with degree $d\nn=n/2$.
Note that this example does \emph{not} satisfy the condition $d\nn=o(n)$ in
\refT{TB}.
We will show that, indeed, \refT{TC} does not hold, and that we in this
example have non-normal limit distributions.

Colour the vertices of the two parts white and black, respectively; each
edge has thus one white and one black endpoint.

Let $\iW_k$ be the number of uncovered white vertices among the first $k$
uncovered vertices, and let $\iY_k:=\iW_k-k/2$.
Thus, at time $k$ there are $\iW_k=k/2+\iY_k$ visible white vertices
and $\iB_k=k-\iW_k=k/2-\iY_k$ visible black vertices; consequently,
\begin{align}\label{kk1}
  \iL_k=(\tfrac{k}{2}+\iY_k)(\tfrac{k}{2}-\iY_k)
=\tfrac{k^2}{4}-\tY_k^2.
\end{align}

The random variable $\iW_k$ has a hypergeometric distribution.
Moreover, it is well known 
(\eg{} by \cite[Theorem 24.1]{Billingsley})
that, in $D\oi$, as \ntoo,
\begin{align}\label{kk2}
n\qqw(\iW_\fnt-\iB_\fnt)
 = 2n\qqw\iY_\fnt
\dto \BR(t),
\end{align}
where $\BR(t)$  is a Brownian bridge, 
see \refSS{Snot1}.
Consequently, we see from \eqref{kk1} and \eqref{kk2} that, in $D\oi$,
\begin{align}\label{kk3}
n\qw\Bigpar{\iL\nn_\fnt-\frac{\fnt^2}{4}}
=-n\qw(\iY\nn_\fnt)^2
\dto -\frac{1}4\BR(t)^2.
\end{align}
In this case, thus the limit process is a (negative) square of a Gaussian
process, and for a fixed $t\in(0,1)$, the distribution of $\iL_\fnt$ is, after
normalization and change of sign, a $\chi^2$-distribution $\chi^2(1)$.
In particular, the limit distribution is not normal.

Note also that we in \eqref{kk3}
cannot replace $\fnt^2/4$ by $(nt)^2/4=t^2|E\nn|$, as we have in
\eqref{tbd};
the reason is that the jumps in $\iL\nn_k$ and in $\fnt^2$ are of order $n$,
and do not disappear asymptotically with the normalization in \eqref{kk3};
in \eqref{kk3} the jumps of the two terms cancel asymptotically, but we
cannot replace one of the terms with a continuous version.
However, if we
as in \refR{Rinterpol} define
$\iL\nn_t$  for real $t\in[0,n]$ by linear interpolation
between integers, and thus $\iL\nn_t$ is a continuous stochastic process, then
it follows easily from \eqref{kk3} that
\begin{align}
n\qw\bigpar{\iL\nn_{nt}-t^2|E\nn|}
=
n\qw\Bigpar{\iL\nn_{nt}-\frac{n^2t^2}{4}}
\dto 
-\frac{1}4\BR(t)^2
\end{align}
in $D\oi$ (and in $C\oi$, since here all processes are continuous).
\end{example}

\section{Preliminaries}\label{Sprel2}
\subsection{Addition in the Skorohod topology}
Addition is not continuous in $D(J)$ in general, but if
$f_n,f,g_n,g\in D(J)$ with $f_n\to f$ and $g_n\to g$, and furthermore $f$
and $g$ are continuous, then $f_n+g_n\to f+g$.
(This follows immediately from the description in \refSS{SSSkor}.)
As a consequence, we have the following results, 
which often will be used without comment.

\begin{lemma}\label{Ladd}
Let $X$, $Y$, $X_n$ and $Y_n$ ($n\ge1$) be stochastic processes on an interval
$J$, with $X$ and\/ $Y$ continuous a.s.
If\/ $(X_n,Y_n)\dto (X,Y)$  in $D(J)$, 
then $X_n+Y_n\dto X+Y$ in $D(J)$.
\end{lemma}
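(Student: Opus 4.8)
The plan is to exploit the key topological fact recalled just before the statement: although addition is discontinuous on $D(J)$ in general, it becomes (jointly sequentially) continuous at pairs $(f,g)$ where both $f$ and $g$ are continuous. So the whole argument is to reduce the probabilistic convergence $X_n+Y_n\dto X+Y$ to an application of the continuous mapping theorem, with the addition map $\Phi\colon D(J)\times D(J)\to D(J)$, $\Phi(f,g)=f+g$, playing the role of the continuous function.

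First I would set up the continuous mapping theorem in the form suited to almost-everywhere continuous maps. Recall that if $h_n\dto h$ in a metric space $S$ and $\Phi\colon S\to S'$ is measurable with $\P\bigpar{h\in D_\Phi}=0$, where $D_\Phi$ is the set of discontinuity points of $\Phi$, then $\Phi(h_n)\dto\Phi(h)$. Here I take $S=D(J)\times D(J)$ (equipped with the product Skorohod topology, which is Polish, so there are no measurability obstructions), $S'=D(J)$, and $\Phi$ the addition map. The hypothesis gives $(X_n,Y_n)\dto(X,Y)$ in $S$, which is exactly $h_n\dto h$ with $h=(X,Y)$.

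The substantive point is to identify a set containing no discontinuities of $\Phi$ that carries the limit law. By the quoted consequence of the description in \refSS{SSSkor}, if $f_n\to f$ and $g_n\to g$ in $D(J)$ with $f$ and $g$ both continuous, then $f_n+g_n\to f+g$; that is, every pair $(f,g)$ with $f,g$ both continuous lies outside $D_\Phi$. Since $X$ and $Y$ are continuous a.s., the limit $(X,Y)$ takes values in the set $C(J)\times C(J)$ of pairs of continuous functions with probability $1$, and hence $\P\bigpar{(X,Y)\in D_\Phi}=0$. Applying the continuous mapping theorem to $\Phi$ then yields $\Phi(X_n,Y_n)=X_n+Y_n\dto X+Y=\Phi(X,Y)$ in $D(J)$, which is the claim.

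The one step requiring a little care — and the main (minor) obstacle — is the measurability of $\Phi$ and of the exceptional set $D_\Phi$, together with confirming that the stated sequential criterion genuinely shows $C(J)^2\cap D_\Phi=\emptyset$. Since $D(J)$ with the Skorohod topology is Polish, so is the product, and the continuous mapping theorem in its Mann--Wald form applies once $\Phi$ is Borel measurable; addition is Borel here because it is sequentially continuous off a set that is itself Borel. I would therefore just remark that these measurability points are standard (see \cite{Billingsley}) and spend no effort grinding through them, the real content being the continuity of $\Phi$ at pairs of continuous functions, which has already been recorded above.
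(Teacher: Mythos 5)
Your proposal is correct and is essentially the paper's own proof: the paper simply invokes the remark that addition is continuous at pairs of continuous functions together with \cite[Corollary 1, p.~31]{Billingsley}, which is exactly the Mann--Wald continuous mapping theorem for maps continuous almost everywhere with respect to the limit law. Your additional remarks on measurability of the addition map and of $D_\Phi$ are the standard points that the cited corollary of Billingsley takes care of.
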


\begin{proof}
By  the comment above and \cite[Corollary 1, p.~31]{Billingsley}.
\end{proof}

\begin{lemma}\label{L0}
Let $X$, $X_n$ and $Y_n$ ($n\ge1$) be stochastic processes on an interval
$J$, with $X$ continuous a.s.,
and suppose that $X_n\dto X$ in $D(J)$ and $Y_n=\opx(1)$.
Then $X_n+Y_n\dto X$ in $D(J)$.
\end{lemma}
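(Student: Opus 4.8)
Lemma \ref{L0} follows from **Lemma \ref{Ladd}** once we set up the right auxiliary process. Let me sketch the plan.

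The statement to prove is:

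\begin{lemma}
Let $X$, $X_n$ and $Y_n$ ($n\ge1$) be stochastic processes on an interval
$J$, with $X$ continuous a.s.,
and suppose that $X_n\dto X$ in $D(J)$ and $Y_n=\opx(1)$.
Then $X_n+Y_n\dto X$ in $D(J)$.
\end{lemma}

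Here is how I would prove it.

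The plan is to deduce this as a quick consequence of \refL{Ladd}, by treating the zero process as the continuous limit of $Y_n$. First I would introduce the trivial process $Y\equiv0$ on $J$, which is (deterministically) continuous, and take it as a candidate limit for $Y_n$. Since $Y_n=\opx(1)$ means precisely that $\sup_{t\in J}|Y_n(t)|\pto0$, the process $Y_n$ converges to $0$ uniformly on $J$ in probability, hence converges to $0$ in the Skorohod topology on $D(J)$; by the description in \refSS{SSSkor}, uniform convergence to a continuous limit implies Skorohod convergence, and here the limit $0$ is continuous. Thus $Y_n\pto 0$, and in particular $Y_n\dto 0$ in $D(J)$.

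The step that needs a little care is upgrading the two separate convergences $X_n\dto X$ and $Y_n\pto 0$ to \emph{joint} convergence $(X_n,Y_n)\dto(X,0)$ in the product space $D(J)\times D(J)$ (equivalently in $D(J)$ with values in $\bbR^2$). This is where I expect the only real subtlety to lie. The key fact is the standard principle that if one of two sequences converges in distribution and the other converges in probability to a constant, then they converge jointly, with the constant limit independent of the other; this is the Skorohod-space analogue of the converging-together lemma. Concretely, because $Y_n\pto 0$, the pair $(X_n,Y_n)$ has the same asymptotic behaviour as $(X_n,0)$, and $(X_n,0)\dto(X,0)$ follows from $X_n\dto X$ by continuity of the embedding $f\mapsto(f,0)$. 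I would cite the relevant version of this principle (e.g.\ the Cram\'er--Slutsky type result for metric-space-valued random elements, as in \cite[Theorem 4.4]{Billingsley} combined with the product structure), or simply note that convergence in probability to a constant gives joint convergence for free.

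Once joint convergence $(X_n,Y_n)\dto(X,Y)$ with $Y\equiv0$ is established, both $X$ and $Y$ are continuous a.s.\ (indeed $Y$ is identically zero), so \refL{Ladd} applies directly and yields $X_n+Y_n\dto X+Y=X$ in $D(J)$, which is the claim. The proof is therefore essentially a one-line reduction to \refL{Ladd}, with the bulk of the (minor) work being the verification that $Y_n=\opx(1)$ gives $Y_n\pto0$ in $D(J)$ and the passage to joint convergence.
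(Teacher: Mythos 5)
Your proof is correct and follows essentially the same route as the paper: observe that $Y_n=\opx(1)$ gives $Y_n\pto0$ in $D(J)$, upgrade to joint convergence $(X_n,Y_n)\dto(X,0)$ via the converging-together principle (the paper cites the same \cite[Theorem 4.4]{Billingsley}), and conclude by \refL{Ladd}. No differences worth noting.
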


\begin{proof}
Recall that $Y_n=\opx(1)$ means $\sup_{t\in J}|Y_n(t)|\pto0$.
Hence, 
$Y_n\pto 0$ in $D(J)$, and thus
$(X_n,Y_n)\dto (X,0)$
\cite[Theorem 4.4]{Billingsley}.
Consequently, the result follows from \refL{Ladd}.
\end{proof}
%

\begin{remark}
  \refL{Ladd} actually holds assuming only that either $X$ or $Y$ is
  continuous a.s.; similarly, continuity of $X$ is not needed in
  \refL{L0}. We omit the proofs, since we need only the cases above.
\end{remark}

\subsection{Quadratic variation of martingales}

Let $M_t$ be a \ctime{} martingale, defined for $t\in J$ where $J$ is
some interval $[0,b]$ or $[0,b)$ with $0<b\le\infty$.
Assume for convenience $M(0)=0$.

Let $\gD M(t):=M(t) - M(t-)$ be the size of the jump (if any) at $t$.
(For $t\in J$, where we for completeness define $\gD M(0):=0$.)

We will use the \emph{quadratic variation}   $[M,M]_t$ of a \ctime{}
martingale $M$,
and its bilinear version, 
the \emph{quadratic covariation} $[M,M_1]_t$ of two martingales $M$ and $M_1$.
For a general definition 
(which further extends beyond martingales to semimartingales)
see  \eg{} \cite[\S I.4e]{JS} or 
\cite[p.~519]{Kallenberg},
but we only need a simple case:
If the martingale $M(t)$ \as{}
has finite variation over every compact subinterval of $J$ (this holds
trivially for the martingales defined in \eqref{QQQt}--\eqref{NNNt} below, 
since they are piecewise smooth), then its 
quadratic variation is given by, see \eg{}
\cite[Theorem 26.6(viii)]{Kallenberg},
\begin{align}\label{qm1}
  [M,M]_t := \sum_{0\le s\le t}(\gD M(s))^2,
\end{align}
and similarly, for any martingale $\MMM$ on $J$,
\begin{align}\label{qm2}
  [M,\MMM]_t:= \sum_{0\le s\le t}\gD M(s) \gD \MMM(s).
\end{align}
(The sums are formally uncountable, but there is only a countable number of
non-zero terms, since $M(s)$ has at most countably many jumps.)

We recall the basic identity \cite[p.~73, Corollary 3]{Protter}
\begin{align}\label{qm3}
    \E[M,M]_t = \E\abs{M(t)}^2
\end{align}
and, more generally (by polarization), 
provided $\E |M(t)|^2<\infty$ and $\E |\MMM(t)|^2<\infty$,
\begin{align}\label{qm4}
\E{}[M,\MMM]_t:= \E\bigpar{M(t)\MMM(t)}.
\end{align}

\subsection{A martingale convergence theorem}

Our proofs are based on the following convergence theorem for martingales;
it is a special case of a more general theorem by \citet[Theorem VIII.3.12]{JS},
and the present formulation is taken from \cite[Theorem 0]{SJ79} and
\cite[Proposition 2.6]{SJ94}, where proofs are given.
(See also \cite[Proposition 9.1]{SJ154} for a similar version with somewhat
weaker assumptions.) 

\begin{proposition}\label{P:JS}
Let $J$ be an interval $[0,b]$ or $[0,b)$, $0<b\le\infty$.
Assume that for each $n$,
$M\nn(t)=(M\nn_{i}(t))_{i=1}^q$ is a $q$-dimensional martingale on $J$
with $M\nn(0)=0$, and that 
$\Sigma(t)=\xpar{\gs_{ij}(t)}_{i,j=1}^q$, $t\in J$, 
is a  \rompar(non-random) 
continuous matrix-valued function 
such that
for every fixed $t\in J$ and $1\le i,j\le q$, 
we have
\begin{align}\label{pjs1}
\E{} \bigsqpar{M\nn_i, M\nn_j}_t&\to \sigma_{ij}(t),
\\\label{pjs2}
 \Var  [M\nn_i,M\nn_j]\strut_t&\to0.
\end{align}
Then $M\nn\dto M$ in $D(J)$ as $n\to\infty$, 
where
$M$ is a continuous $q$-dimensional Gaussian process with $\E M(t)=0$ and
covariance function
\begin{align}
\Cov\bigpar{M_{i}(s),M_j(t)}
=  \E \bigpar{M_{i}(s)M_j(t)}=\sigma_{ij} (s),
\qquad  s,t\in J \text{ and } s\le t
.\end{align}
\end{proposition}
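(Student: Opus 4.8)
The plan is to obtain the result as an application of the general functional martingale central limit theorem of \citet[Theorem VIII.3.12]{JS}. That theorem delivers convergence in $D(J)$ to a continuous Gaussian process with independent increments once two ingredients are in place: (A) the quadratic covariations converge in probability, for each fixed $t$, to the deterministic continuous limit, $[M\nn_i,M\nn_j]_t\pto\sigma_{ij}(t)$; and (B) the jumps of $M\nn$ are asymptotically negligible, so that the limit is genuinely continuous and not contaminated by a jump (\eg{} compound Poisson) component. The hypotheses \eqref{pjs1}--\eqref{pjs2} are phrased in terms of the mean and variance of $[M\nn_i,M\nn_j]_t$, so the work is to convert them into (A) and, less obviously, into (B).

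Ingredient (A) is immediate: for fixed $t$ and fixed $i,j$, \eqref{pjs1} gives $\E[M\nn_i,M\nn_j]_t\to\sigma_{ij}(t)$ and \eqref{pjs2} gives $\Var[M\nn_i,M\nn_j]_t\to0$, so Chebyshev's inequality yields $[M\nn_i,M\nn_j]_t\pto\sigma_{ij}(t)$. This holds simultaneously for all pairs $i,j$ and for all $t$ in a fixed countable dense subset of $J$.

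The crux is ingredient (B). Here I would exploit that for $i=j$ the process $t\mapsto[M\nn_i,M\nn_i]_t=\sum_{s\le t}(\gD M\nn_i(s))^2$ is non-decreasing, while its limit $\sigma_{ii}(t)$ is continuous. A \Polya-type argument for monotone functions upgrades the pointwise (in probability) convergence from (A) to uniform convergence on compacts, $\sup_{s\le t}\bigabs{[M\nn_i,M\nn_i]_s-\sigma_{ii}(s)}\pto0$; concretely one passes to a subsequence with almost sure convergence on the dense set and invokes the classical fact that monotone functions converging pointwise to a continuous limit converge uniformly. Since $[M\nn_i,M\nn_i]$ is monotone and $\sigma_{ii}$ is continuous, each jump obeys $(\gD M\nn_i(s))^2=\gD[M\nn_i,M\nn_i]_s\le 2\sup_{u\le t}\bigabs{[M\nn_i,M\nn_i]_u-\sigma_{ii}(u)}$, whence $\sup_{s\le t}\abs{\gD M\nn_i(s)}\pto0$. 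This is exactly the asymptotic negligibility of jumps required for a continuous Gaussian limit, and it is the step where the vanishing-variance hypothesis \eqref{pjs2} does its real work.

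With (A) and (B) verified, \citet[Theorem VIII.3.12]{JS} gives $M\nn\dto M$ in $D(J)$, with $M$ a continuous $q$-dimensional Gaussian martingale whose quadratic covariation is the deterministic matrix $\Sigma$. It then remains only to read off the covariance: for $s\le t$ the increment $M_j(t)-M_j(s)$ is orthogonal to $M_i(s)$ by the martingale property, so $\E\bigpar{M_i(s)M_j(t)}=\E\bigpar{M_i(s)M_j(s)}=\E[M_i,M_j]_s=\sigma_{ij}(s)$, the last equality being the limiting form of \eqref{qm4}. The main obstacle is ingredient (B): nothing in the statement bounds the individual jumps directly, and they must instead be recovered from the convergence of the increasing quadratic-variation processes to a continuous deterministic limit.
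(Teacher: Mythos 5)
Your argument is correct and is essentially the paper's own route: the paper does not prove \refP{P:JS} directly but cites \cite[Theorem VIII.3.12]{JS} together with the derivations in \cite[Theorem 0]{SJ79} and \cite[Proposition 2.6]{SJ94}, which proceed exactly as you do --- Chebyshev's inequality upgrades \eqref{pjs1}--\eqref{pjs2} to $[M\nn_i,M\nn_j]_t\pto\sigma_{ij}(t)$, and the monotonicity of $t\mapsto[M\nn_i,M\nn_i]_t$ against the continuous deterministic limit $\sigma_{ii}$ gives the asymptotic negligibility of the jumps needed for a continuous Gaussian limit. Should one prefer the Lindeberg/uniform-integrability form of the jump condition in \cite{JS}, it also comes for free, since \eqref{pjs1} combined with the convergence in probability makes $[M\nn_i,M\nn_i]_t$ convergent in $L^1$ and hence uniformly integrable.
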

(Furthermore, the limit process $M(t)$ is a martingale, 
but we have no use for this extra  property in the present paper.)
Note that by \eqref{qm4}, \eqref{pjs1} can equivalently be
written as
$\E \bigpar{M\nn_i(t) M\nn_j(t)}\to \sigma_{ij}(t)$.

In our applications the martingales will blow up at $t=1$, making it
impossible to use \refP{P:JS} directly on the closed interval $\oi$; 
instead we will use \refP{P:JS} on $\oio$, and then
obtain convergence
on $\oi$ using the following  lemma.

\begin{lemma}\label{Loi}
Suppose that $M\nn(t)$, $n\ge1$, are martingales on $\oio$ such that
$M\nn(t)\dto M(t)$ in $D\oio$ for some continuous stochastic process $M(t)$.
Suppose furthermore that
\begin{align}\label{loi1}
\E \bigabs{M\nn(t)}^2 \le C (1-t)^{-a}
\end{align}
for some $a\ge0$, uniformly in $n\ge1$ and $t\in\oio$. 
Let $b>a/2$ and define $\tM\nn(t):=(1-t)^bM\nn(t)$
and $\tM(t):=(1-t)^bM(t)$ for $t\in\oio$, and $\tM\nn(1)=\tM(1):=0$.
Then, \as, $\tM\nn(1-)=\tM(1-)=0$, and thus \as{} $\tM\nn\in D\oi$ and
$\tM$ is continuous on $\oi$; moreover, 
\begin{align}\label{loi2}
  \tM\nn(t)\dto  \tM(t)
\end{align}
in $D\oi$.
\end{lemma}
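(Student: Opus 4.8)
The plan is to reduce the whole statement to one uniform-in-$n$ tail estimate for the weighted process near the singular endpoint $t=1$, and then to feed that estimate into a standard approximation theorem for weak convergence. Throughout write $\tM\nn(t)=(1-t)^bM\nn(t)$. First observe that $\tM\nn\dto\tM$ in $D\oio$: multiplication by the fixed continuous weight $(1-t)^b$ is a continuous map on $D\oio$ at continuous functions, so this is immediate from the hypothesis $M\nn\dto M$, the a.s.\ continuity of $M$, and the continuous-mapping theorem. The heart of the matter is the claim that there is a function $\psi$ with $\psi(c)\to0$ as $c\to1$ and
\begin{align}
\sup_{n\ge1}\E\sup_{t\in[c,1)}\bigabs{\tM\nn(t)}^2\le\psi(c),\qquad 0\le c<1.
\end{align}
To prove this I decompose $[c,1)$ dyadically into $J_k:=[1-2^{-k},1-2^{-k-1}]$ and apply Doob's $L^2$ maximal inequality (the processes are c\`adl\`ag and square-integrable by \eqref{loi1}; see \eg{} \cite{Kallenberg}) on each $[0,1-2^{-k-1}]$, obtaining $\E\sup_{J_k}\abs{M\nn}^2\le 4C\,2^{(k+1)a}$. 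Multiplying by the bound $(1-t)^{2b}\le 2^{-2kb}$ valid on $J_k$ and summing the geometric series $\sum_k 2^{k(a-2b)}$ — which converges \emph{precisely} because $b>a/2$ — gives the displayed estimate, uniformly in $n$.

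Next I transfer the estimate to the limit. For each compact $[c,1-\eps]\subset\oio$ the functional $f\mapsto\sup_{t\in[c,1-\eps]}\abs{f(t)}^2$ is continuous at continuous $f$, so by $\tM\nn\dto\tM$, the continuous-mapping theorem, and Fatou's lemma (applied after a Skorohod coupling) we get $\E\sup_{t\in[c,1-\eps]}\abs{\tM(t)}^2\le\psi(c)$; letting $\eps\to0$ and using monotone convergence yields the same bound for $\tM$. The endpoint behaviour now follows at once: $\sup_{t\in[c,1)}\bigabs{\tM\nn(t)}$ is nonincreasing in $c$ and tends to $0$ in $L^2$ as $c\to1$, hence its monotone a.s.\ limit, namely $\limsup_{t\to1}\bigabs{\tM\nn(t)}$, is $0$; thus $\tM\nn(1-)=0$ \as, and the same argument gives $\tM(1-)=0$ \as. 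Consequently $\tM\nn\in D\oi$ and $\tM$ is continuous on $\oi$, as asserted.

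For the convergence \eqref{loi2} I upgrade from $D\oio$ to $D\oi$ using the truncation $T_c(f):=f\,\indic{\cdot\le c}$, viewed as a map $D\oio\to D\oi$. Since Skorohod convergence to a continuous function is uniform on compacts, $\norm{T_c(f_n)-T_c(f)}_\infty=\sup_{[0,c]}\abs{f_n-f}\to0$ whenever $f_n\to f$ in $D\oio$ with $f$ continuous, and uniform convergence implies convergence in $D\oi$; hence $T_c$ is continuous at continuous functions and $T_c(\tM\nn)\dto T_c(\tM)$ in $D\oi$ for each fixed $c$. Moreover $\norm{T_c(\tM)-\tM}_\infty=\sup_{t\in(c,1]}\abs{\tM(t)}\to0$ \as as $c\to1$ (by continuity of $\tM$ and $\tM(1)=0$), so $T_c(\tM)\dto\tM$. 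Finally, writing $d$ for the Skorohod metric and using $d(f,g)\le\norm{f-g}_\infty$, we have $d(T_c(\tM\nn),\tM\nn)\le\sup_{t\in[c,1)}\bigabs{\tM\nn(t)}$, so the key estimate and Markov's inequality give $\limsup_n\P\bigpar{d(T_c(\tM\nn),\tM\nn)>\eta}\le\psi(c)/\eta^2\to0$ as $c\to1$, uniformly in $n$. These three facts are exactly the hypotheses of the approximation theorem \cite[Theorem 3.2]{Billingsley}, which yields $\tM\nn\dto\tM$ in $D\oi$.

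The main obstacle is the uniform-in-$n$ control of the weighted process near $t=1$ recorded in the displayed estimate; this is the only place where the martingale structure (through Doob's inequality) and the condition $b>a/2$ enter, and everything afterwards is soft. A secondary point requiring care is that $M$ is \emph{not} assumed to be a martingale, so its tail bound cannot be obtained by applying Doob directly and must instead be imported from the $M\nn$ via the continuous-mapping theorem and Fatou.
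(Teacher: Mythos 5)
Your proof is correct and follows essentially the same route as the paper: the same dyadic decomposition of $[c,1)$ combined with Doob's $L^2$ maximal inequality (this is exactly where $b>a/2$ enters), Fatou's lemma to transfer the uniform tail bound to the limit process and to conclude $\tM\nn(1-)=\tM(1-)=0$ a.s. The only difference is cosmetic: for the final upgrade from $D\oio$ to $D\oi$ the paper invokes the ready-made criterion \cite[Proposition 2.4]{SJ94}, whereas you reprove that step via an explicit truncation operator and the approximation theorem in Billingsley, which is a perfectly valid self-contained substitute.
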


\begin{proof}
  First, for any $N\ge1$,  Doob's inequality 
\cite[Proposition 7.16]{Kallenberg}
and \eqref{loi1} imply
  \begin{align}\label{loi3}
 \E \sup_{1-2^{-N}\le t\le 1-2^{-N-1}}|\tM\nn(t)|^2&
\le 2^{-2bN} \E \sup_{0\le t\le 1-2^{-N-1}}|M\nn(t)|^2
\notag\\&
\le 2^{-2bN} 4\E |M\nn(1-2^{-N-1})|^2
\notag\\&
\le C 2^{aN-2bN}
=C 2^{-(2b-a)N}
,
  \end{align}
and thus
  \begin{align}\label{loi4}
 \E \sup_{1-2^{-N}\le t<1}|\tM\nn(t)|^2&
\le C\sum_{\ell=N}^\infty 2^{-(2b-a)\ell}
\le C 2^{-(2b-a)N}.
  \end{align}
Letting $N\to\infty$ yields, by Fatou's lemma,
  \begin{align}\label{loi5}
\E\limsup_{t\upto1}|\tM\nn(t)|^2&
= \E \lim_{\Ntoo}\sup_{1-2^{-N}\le t<1}|\tM\nn(t)|^2
\le C \lim_{\Ntoo}2^{-(2b-a)N}
\notag\\&=0,
  \end{align}
and thus $\tM\nn(1-):=\lim_{t\upto1}\tM\nn(t)=0$ \as{} as asserted.
Moreover, since \eqref{loi3} holds uniformly in $n$, it follows (by Fatou's
lemma again) that it holds for $\tM(t)$ too, and thus the same argument
shows that \as{} $\tM(1-)=0$, and thus $\tM$ is continuous.

Finally, \eqref{loi4} and Markov's inequality imply that, for any $\eps>0$
and $u\in\oio$,
\begin{align}\label{loi6}
  \sup_n \P\bigpar{\sup_{u\le t<1}|\tM\nn(t)|>\eps}\le C\eps\qww (1-u)^{2b-a},
\end{align}
which tends to 0 as $u\upto1$; hence \cite[Proposition 2.4]{SJ94} applies
and
yields $\tM\nn\dto\tM$ in $D\oi$.
\end{proof}

\begin{remark}\label{Roi}
  \refL{Loi} extends to vector-valued martingales $(M\nn_i(t))_{i=1}^q$,
where we may have separate exponents $a_i$ and $b_i$ (with $b_i>a_i/2\ge0$)
for different components $M\nn_i$, $i=1,\dots,q$.
(Thus, $\tM_i(t):=(1-t)^{b_i}M_i(t)$.)
To see this, note that the proof above shows \eqref{loi6} for each component
$\tM_i\nn$, and it follows that \eqref{loi6} holds for $\tM\nn$, with $2b-a$
replaced by $\min_i(2b_i-a_i)>0$. This and the  convergence $\tM\nn(t)\dto
M\nn(t)$ in $D\oio$ then implies convergence in $D\oi$ just as in the
1-dimensional case in
\cite[Proposition 2.4]{SJ94}.
(For a proof, use \eg{} the Skorohod coupling theorem
\cite[Theorem~4.30]{Kallenberg}; we omit the details.) 
\end{remark}

\section{A decomposition into martingales}\label{Spf1}
The main idea of the proofs is to decompose $L(t)$ as a linear combination
(with coefficients that are deterministic functions of $t$)
of some martingales defined below, and then to show (joint) convergence of
these martingales. 

In this section
we consider a fixed $n$ and construct the martingales and the decomposition
that we use; we also calculate the quadratic (co)variations of the
martingales
and make some estimates of them. 
In \refS{Spf2}, we then let \ntoo{} and show the desired convergence.

We thus assume throughout this section that $G$ is a fixed, deterministic
graph on $[n]$. Thus $E=E(G)$ is a fixed set, and the vertex degrees $d_i$ are
deterministic. 

As said in the introduction, we use a vertex version of the method in
\cite{SJ79,SJ94}.
We define, for $i\in[n]$,
the random function
\begin{align}\label{I}
  I_i(t):=\indic{T_i\le t},
\qquad 0\le t\le 1.
\end{align}
Thus $I_i(t)$ is the indicator of the event that vertex $i$ is visible at
time $t$.
We define further
\begin{align}\label{II}
  \II_i(t)&:=I_i(t)-\E I_i(t)
=I_i(t)-t,
\qquad 0\le t\le 1,
\\\label{III}
  \III_i(t)&:=(1-t)\qw\II_i(t)
=\frac{I_i(t)-t}{1-t},
\qquad 0\le t< 1.
\end{align}
Note that $\II_i(0)=\II_i(1)=0$, and that
\begin{align}\label{eiii}
\E \III_i(t)=\E\II_i(t)=0.  
\end{align}
Furthermore,
\begin{align}\label{e2iii}
  \E\III_i(t)^2=(1-t)\qww\E\II_i(t)^2
=(1-t)\qww\Var I_i(t)
=\frac{t}{1-t},
\end{align}
and, when $i\neq j$, by independence,
\begin{align}\label{coviii}
  \E\bigpar{\III_i(t)\III_j(t)}
=  \E{\III_i(t)}  \E{\III_j(t)}
=0.
\end{align}

Let $\cF_t$ be the \gsf{} generated by $\set{I_i(s):i\in[n]\text{ and }s\le t}$.
The martingales below are
martingales
with respect to the filtration $(\cF_t)_{t}$.

\begin{lemma}\label{LM}
  $\III_i(t)$ is a martingale for $t\in\oio$, for every $i\in[n]$.

More generally, for any sequence $1\le i_1<\dots<i_r\le n$,
the product $\prod_{j=1}^r \III_{i_j}(t)$ is a martingale on $\oio$.
\end{lemma}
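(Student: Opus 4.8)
The plan is to first establish the single-index case by a direct computation of conditional expectations, and then deduce the product case from the independence of the coordinates.

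First I would show that $\III_i(t)$ is an $(\cF_t)$-martingale on $\oio$. Fix $i$ and $0\le s\le t<1$. Conditioned on $\cF_s$ there are two possibilities: either $T_i\le s$, in which case $I_i(u)=1$ for all $u\ge s$; or $T_i>s$, in which case the uniform distribution restricted to $(s,1)$ is again uniform, so $\P\bigpar{T_i\le t\mid \cF_s}=(t-s)/(1-s)$. In the first case $\E\bigpar{\III_i(t)\mid\cF_s}=\frac{1-t}{1-t}=1=\III_i(s)$. In the second case a short computation gives $\E\bigpar{I_i(t)-t\mid\cF_s}=-s(1-t)/(1-s)$, hence $\E\bigpar{\III_i(t)\mid\cF_s}=-s/(1-s)$, while on $\set{T_i>s}$ we have $I_i(s)=0$ and thus $\III_i(s)=-s/(1-s)$ as well. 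So $\E\bigpar{\III_i(t)\mid\cF_s}=\III_i(s)$ in both cases, and integrability holds since $\E\III_i(t)^2=t/(1-t)<\infty$ by \eqref{e2iii}; this gives the martingale property.

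For the product, I would exploit that the $T_i$ are independent. Write $\cF_t^{(i)}$ for the \gsf{} generated by $\set{I_i(u):u\le t}$, so that $\cF_t=\bigvee_{i=1}^n\cF_t^{(i)}$ and the \gsf{}s $\cF_1^{(1)},\dots,\cF_1^{(n)}$ are independent. For distinct $i_1<\dots<i_r$, each factor $\III_{i_j}(t)$ is $\cF_t^{(i_j)}$-measurable, and by the computation above is also a martingale with respect to its own filtration $(\cF_t^{(i_j)})_t$, with $\E\bigpar{\III_{i_j}(t)\mid\cF_s}=\III_{i_j}(s)$. Since the pairs $(\III_{i_j}(\cdot),\cF^{(i_j)})$ are mutually independent and each factor is square-integrable, the standard fact that the conditional expectation of a product of independent integrable factors factorizes over independent \gsf{}s yields
\[
\E\Bigpar{\prod_{j=1}^r\III_{i_j}(t)\ \Big|\ \cF_s}
=\prod_{j=1}^r\E\bigpar{\III_{i_j}(t)\mid\cF_s}
=\prod_{j=1}^r\III_{i_j}(s),
\]
where the factors $\cF_s^{(i)}$ for $i\notin\set{i_1,\dots,i_r}$ drop out because they are independent of $\prod_j\III_{i_j}(t)$. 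Integrability of the product follows from $\E\bigabs{\prod_j\III_{i_j}(t)}=\prod_j\E\bigabs{\III_{i_j}(t)}<\infty$. This is exactly the martingale identity for the product.

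The main obstacle is the rigorous justification of the factorization of the conditional expectation over the independent marginal filtrations: one must check that conditioning the particular product $\prod_j\III_{i_j}(t)$ on the full \gsf{} $\cF_s$ reduces to conditioning on $\bigvee_{j}\cF_s^{(i_j)}$, and then split the expectation across the independent coordinates. This is a routine but slightly delicate measure-theoretic step; everything else is an elementary computation.
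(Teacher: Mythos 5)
Your proposal is correct and follows essentially the same route as the paper: the paper's (very terse) proof likewise rests on the one-step conditional expectation identity $\E\bigpar{\III_i(t)\mid\cF_s}=\III_i(s)$ — which you verify explicitly by splitting on $\set{T_i\le s}$ versus $\set{T_i>s}$ — and then deduces the product case from the independence of the collections $\set{\III_i(t)}_t$ over $i$. Your version simply fills in the computation and the (routine) factorization of the conditional expectation over the independent marginal $\sigma$-fields, both of which are done correctly.
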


\begin{proof}
(After \cite[Lemma 2.1]{SJ94}; see also \cite[Lemma 2.1]{SJ79}.)
It is easy to see that $\III_i$ is a Markov process with
$\E\bigpar{\III_i(t)\mid\cF_s}=\III_i(s)$ when $0\le s\le t<1$,
which implies that $I_i(t)$ is a martingale. 

The final sentence follows because
the collections of random variable $\set{\III_i(t)}_{t\in\oi}$, $i\in[n]$,
are independent of each other.
\end{proof}

We have by \eqref{a2} and \eqref{I}--\eqref{II}
(summing over unordered pairs $ij$)
\begin{align}\label{b1}
  L(t)&=\sum_{ij\in E} I_i(t)I_j(t)
=\sum_{ij\in E} \bigpar{\II_i(t)+t}\bigpar{\II_j(t)+t}
\notag\\&
=\sum_{ij\in E} \II_i(t)\II_j(t)
+\sumin \sum_{j:ij\in E}\II_i(t)t
+\sum_{ij\in E} t^2
\notag\\&
=
\sum_{ij\in E} \II_i(t)\II_j(t)
+t\sumin d_i\II_i(t)+t^2|E|.
\end{align}

We define, for $t\in\oi$,
\begin{align}\label{QQt}
  \QQ(t)&:=\sum_{ij\in E}\II_i(t)\II_j(t),
\\\label{SSt}
\SS(t)&:=\sumin d_i\II_i(t),
\\\label{NNt}
\NN(t)&:=\sumin\II_i(t)=\sumin I_i(t)-nt=N(t)-nt,
\end{align}
and further, for $t\in\oio$,
recalling \eqref{III},
\begin{align}\label{QQQt}
  \QQQ(t)&:=\sum_{ij\in E}\III_i(t)\III_j(t) = (1-t)\qww\QQ(t),
\\\label{SSSt}
\SSS(t)&:=\sumin d_i\III_i(t) =(1-t)\qw\SS(t),
\\\label{NNNt}
\NNN(t)&:=\sumin\III_i(t)=(1-t)\qw\NN(t).
\end{align}
By \refL{LM}, $\QQQ(t)$, $\SSS(t)$ and $\NNN(t)$ are martingales on $\oio$.

We can now rewrite \eqref{b1} as
\begin{align}\label{b2a}
  L(t) &=  \QQ(t) + t \SS(t) + t^2|E|
\\&
= (1-t)^2 \QQQ(t) + t(1-t) \SSS(t) + t^2|E|,\label{b2b}
\end{align}
where the second line is meaningful only for $t\in[0,1)$.

The main idea in our proofs is to 
use the decomposition \eqref{b2a}--\eqref{b2b} 
together with limit theorems for $\QQ$ and $\ZZ$ 
(and $\NN$, for reasons that will be seen later),
or (essentially equivalently) for the martingales
$\QQQ$, $\SSS$ and $\NNN$,
which we obtain from \refP{P:JS} 
and calculations of quadratic (co)variations.

\subsection{Quadratic variations}
To find the quadratic (co)variations,
we note that $I_i(t)$ and $\II_i(t)$ have jumps $+1$ at $t=T_i$; hence
$\III_i(t)$ has a jump $(1-T_i)\qw$ at $T_i$ (and no other jump).
It follows from \eqref{QQQt}--\eqref{NNNt} that $\QQQ(t)$, $\SSS(t)$ and $\NNN(t)$
have jumps only at the points $T_i$, $i=1,\dots,n$, 
and that
\begin{align}
  \gD \QQQ(T_i) &=(1-T_i)\qw\sum_{j\sim i}\III_j(T_i),
\\
\gD \SSS(T_i) &= d_i(1-T_i)\qw,
\\
\gD \NNN(T_i) &= (1-T_i)\qw.
\end{align}
Hence, \eqref{qm1}--\eqref{qm2} yield
\begin{align}\label{qQQ}
[\QQQ,\QQQ]_t &= 
 \sumin \indic{T_i\le t} (1-T_i)\qww \Bigpar{\sum_{j\sim i}\III_j(T_i)}^2,
\\\label{qSS}
[\SSS,\SSS]_t &= \sumin \indic{T_i\le t}d_i^2 (1-T_i)\qww,
\\  \label{qNN}
[\NNN,\NNN]_t &= \sumin \indic{T_i\le t} (1-T_i)\qww.
\\\label{qQS}
[\QQQ,\SSS]_t&=\sumin \indic{T_i\le t} (1-T_i)\qww d_i{\sum_{j\sim i}\III_j(T_i)},
\\\label{qQN}
[\QQQ,\NNN]_t&=\sumin \indic{T_i\le t} (1-T_i)\qww {\sum_{j\sim i}\III_j(T_i)},
\\\label{qSN}
[\SSS,\NNN]_t &= \sumin \indic{T_i\le t}d_i (1-T_i)\qww.
\end{align}

We easily calculate the expectations. Since the $T_i\in\Uoi$ are independent,
we obtain from \eqref{qQQ}--\eqref{qSN}, recalling \eqref{eiii}--\eqref{coviii},
\begin{align}\label{eQQ}
\E [\QQQ,\QQQ]_t &= 
 \sumin \intot (1-s)\qww \E\Bigpar{\sum_{j\sim i}\III_j(s)}^2\dd s
=
 \sumin \intot (1-s)\qww \sum_{j\sim i}\E\III_j(s)^2\dd s
\notag\\&
=\sumin \intot d_i \frac{s}{(1-s)^3}\dd s
=\sumin d_i\cdot \frac12\frac{t^2}{(1-t)^2}
=|E|\frac{t^2}{(1-t)^2}.
\\\label{eSS}
\E [\SSS,\SSS]_t &= \sumin \intot d_i^2 (1-s)\qww\dd s
=\sumin d_i^2\cdot \frac{t}{1-t},
\\\label{eNN}
\E [\NNN,\NNN]_t &= \sumin \intot  (1-s)\qww\dd s
=n\frac{t}{1-t},
\\\label{eQS}
\E [\QQQ,\SSS]_t &= 
 \sumin \intot (1-s)\qww d_i\E{\sum_{j\sim i}\III_j(s)}\dd s
=0,
\\\label{eQN}
\E [\QQQ,\NNN]_t &= 
 \sumin \intot (1-s)\qww \E{\sum_{j\sim i}\III_j(s)}\dd s
=0,
\\\label{eSN}
\E [\SSS,\NNN]_t &= \sumin \intot d_i (1-s)\qww\dd s
=\sumin d_i\cdot \frac{t}{1-t}
=2|E|\frac{t}{1-t}.
\end{align}

We also need estimates of the variances of the quadratic (co)variations.
(We do not bother to calculate the variances exactly, although this clearly
can be done.)
For simplicity, we consider a fixed $t$. (It is easily seen that the
constants below can be taken bounded for $t\in[0,t_0]$ for any $t_0<1$, but
that they blow up as $t\to1$.)

Recall that
$\hom(\sC_4,G)$ is the number of (labelled) copies of $\sC_4$ in $G$.

\begin{lemma}\label{LV}
  For each fixed $t\in\oio$, we have, with constants $C$ that depend on $t$,
\begin{align}\label{vQQ}
\Var [\QQQ,\QQQ]_t &
\le
C\sumin d_i^3.
\\\label{vSS}
\Var [\SSS,\SSS]_t &
\le C \sumin d_i^4,
\\\label{vNN}
\Var [\NNN,\NNN]_t &
\le Cn,
\\\label{vQS}
\Var [\QQQ,\SSS]_t &
\le C \sumin d_i^4,
\\\label{vQN}
\Var [\QQQ,\NNN]_t &
\le C \sumin d_i^2,
\\\label{vSN}
\Var [\SSS,\NNN]_t &
\le C \sumin d_i^2.
  \end{align}
\end{lemma}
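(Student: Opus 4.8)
The plan is to bound each variance in \refL{LV} by expanding the quadratic (co)variation as a sum over vertices $i$ (as in \eqref{qQQ}--\eqref{qSN}), writing it as $\sum_i X_i$ where each $X_i$ depends on $T_i$ and on $\{\III_j(T_i):j\sim i\}$, and then computing $\Var\bigpar{\sum_i X_i}=\sum_{i,i'}\Cov(X_i,X_{i'})$. The key structural fact I would exploit is the independence established in \refL{LM}: the collections $\set{\III_i(t)}_t$ are independent across $i$, and each $\III_j(s)$ has mean $0$ with $\E\III_j(s)^2=s/(1-s)$ by \eqref{eiii}--\eqref{e2iii}. For a fixed $t\in\oio$ all the resulting time-integrals $\int_0^t (1-s)^{-k}\dd s$ are finite constants depending only on $t$, so I would absorb them into the generic constant $C$ and focus purely on the combinatorial (degree-counting) part.

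The three cases \eqref{vSS}, \eqref{vNN}, \eqref{vSN} are the easy ones, since there the summand $X_i=\indic{T_i\le t}\,d_i^{a}(1-T_i)\qww$ is a deterministic function of $d_i$ and $T_i$ alone. Here the $X_i$ are independent (different $T_i$), so $\Var\bigpar{\sum_i X_i}=\sum_i\Var(X_i)\le C\sum_i d_i^{2a}$, giving $C\sum_i d_i^4$, $Cn$, and $C\sum_i d_i^2$ respectively; I would dispatch these first. For \eqref{vSN} one has to note $X_iX_{i'}$ factorizes over independent $T_i,T_{i'}$ so only the diagonal survives.

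The three cases involving $\QQQ$ — namely \eqref{vQQ}, \eqref{vQS}, \eqref{vQN} — are where the real work lies, because the summand now carries the factor $\bigpar{\sum_{j\sim i}\III_j(T_i)}$ (or its square), which couples vertex $i$ to its neighbours and destroys independence across $i$: two terms $X_i,X_{i'}$ can be correlated whenever $i$ and $i'$ share a neighbour, or are neighbours, etc. My plan is to expand $\Cov(X_i,X_{i'})$ and track which products of $\III$-factors have nonzero expectation. Since each $\III_j$ has mean zero and the distinct $\III_j$ are independent, a product $\prod \III_{j_\ell}(s)\III_{k_\ell}(s')$ has nonzero expectation only when every index appears with multiplicity at least two; the surviving configurations are precisely small closed walks, which is why $\hom(\sC_4,G)$ enters the picture as mentioned just before the lemma. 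Concretely, for \eqref{vQN} the summand is $\indic{T_i\le t}(1-T_i)\qww\sum_{j\sim i}\III_j(T_i)$; conditioning on $\set{T_j:j\neq i}$ and integrating over $T_i$ first, one finds $\Cov(X_i,X_{i'})$ is nonzero only when $i,i'$ have a common neighbour or $i\sim i'$, and summing over such pairs yields a bound governed by $\sum_i d_i^2$ (the number of paths of length two, i.e.\ cherries, plus edge contributions), giving $C\sum_i d_i^2$.

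The hardest case, and the one I expect to be the main obstacle, is \eqref{vQQ}, where $X_i=\indic{T_i\le t}(1-T_i)\qww\bigpar{\sum_{j\sim i}\III_j(T_i)}^2$ carries a \emph{squared} neighbour-sum, so that $X_iX_{i'}$ involves up to four $\III$-factors and the pairing analysis is genuinely combinatorial. I would expand $\bigpar{\sum_{j\sim i}\III_j}^2=\sum_{j,j'\sim i}\III_j\III_{j'}$ and, using the mean-zero/independence structure, determine which index patterns across $i$ and $i'$ survive after integrating out $T_i$ and $T_{i'}$. The dominant surviving patterns correspond to $4$-cycles and to degenerate (repeated-index) configurations; the $\sC_4$ contribution is controlled by $\hom(\sC_4,G)\le\sum_i d_i\sumx_{j\sim i}(\cdots)$, which one bounds crudely by $\bigpar{\sum_i d_i^2}^2$ or, more relevantly here, the diagonal and near-diagonal terms produce $\sum_i d_i^3$. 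The key estimate is that the off-diagonal ($\sC_4$) terms are of lower or equal order and get absorbed; I would verify, perhaps via \CSineq{} on the degree sums, that everything is dominated by $C\sum_i d_i^3$. Since the lemma statement explicitly flags $\hom(\sC_4,G)$, I anticipate the clean way to organize \eqref{vQQ} (and \eqref{vQS}) is to bound the cross terms by a constant times $\hom(\sC_4,G)$ plus $\sum_i d_i^3$, and then to observe $\hom(\sC_4,G)=O\bigpar{\sum_i d_i^3}$ is \emph{not} generally true, so the genuine content is showing the $\sC_4$ terms are actually dominated by $\sum_i d_i^3$ here because of the extra $(1-T_i)\qww$ weighting and the mean-zero cancellations — that reconciliation is the delicate point I would treat most carefully.
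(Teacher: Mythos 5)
Your overall architecture matches the paper's: expand each quadratic (co)variation as a sum indexed by vertices (or by adjacent pairs/triples), use that the families $\set{\III_j(t)}_t$ are independent and mean zero so that only index-overlapping cross terms survive, and absorb the time integrals into $C=C(t)$. Your treatment of \eqref{vSS}, \eqref{vNN}, \eqref{vSN} (sums of independent bounded terms) and of \eqref{vQN} (terms $D_{ij}$ independent unless they share an index, giving $C\sumin d_i^2$) is exactly what the paper does.

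The gap is in \eqref{vQQ} (and, untreated by you, \eqref{vQS}). You correctly isolate the $\sC_4$ configurations as the dangerous cross terms, but you then assert that $\hom(\sC_4,G)=O\bigpar{\sumin d_i^3}$ is ``not generally true'' and that the proof must therefore rest on further cancellations coming from the $(1-T_i)\qww$ weights. This is wrong on both counts. First, the inequality $\hom(\sC_4,G)\le\sumin d_i^3$ holds deterministically for \emph{every} graph: dropping one edge of a closed $4$-walk gives an injection into the homomorphisms of $\sP_4$, so
\begin{align*}
\hom(\sC_4,G)\le\hom(\sP_4,G)=\sum_{i\sim j}d_id_j
\le\sum_{i\sim j}\tfrac12\bigpar{d_i^2+d_j^2}=\sumin d_i^3,
\end{align*}
which is the paper's display \eqref{salt} (a special case of Sidorenko's inequality, which the paper also invokes). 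Second, no further cancellation is available to you: for a disjoint union of $4$-cycles one has $\hom(\sC_4,G)=8n=\sumin d_i^3$, and each $\sC_4$ cross term $\E\bigpar{B_{ijk}B_{i'j'k'}}$ is strictly positive (it is an integral of products of the nonnegative covariances $\E\bigpar{\III_j(s)\III_j(s')}$), so the $\sC_4$ contribution genuinely enters $\Var[\QQQ,\QQQ]_t$ at that order and cannot be argued away probabilistically. The missing step is purely graph-theoretic, and without it your proof of \eqref{vQQ} is incomplete; the analogous estimate $\hom(\sK_{1,4},G)=\sumin d_i^4$ (or the corresponding elementary degree computation) is likewise what is needed to finish \eqref{vQS}, where the extra factors $d_i$, $d_{i'}$ raise the exponent from $3$ to $4$.
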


\begin{proof}
To begin with the simplest case, \eqref{qNN} shows that $[\NNN,\NNN]_t$ is the
sum of $n$ independent random variables, each bounded by $(1-t)\qww$. 
Hence each term has variance $\le (1-t)^{-4}$, and thus 
$\Var[\NNN,\NNN]_t\le n(1-t)^{-4}$, which we simplify to \eqref{vNN}.

The same argument also gives \eqref{vSS} and \eqref{vSN}.

Next, we write \eqref{qQQ} as
\begin{align}\label{el1}
  [\QQQ,\QQQ]_t = \sumin\sum_{j\sim i} A_{ij}+\sum_{i,j,k:j\sim i\sim k}B_{ijk},
\end{align}
where in the second sum we assume $j\neq k$, and we let
\begin{align}\label{elab}
  A_{ij}&:=\indic{T_i\le t} (1-T_i)\qww \III_j(T_i)^2,
\\
  B_{ijk}&:=\indic{T_i\le t} (1-T_i)\qww \III_j(T_i)\III_k(T_i) \indic{j\neq k}.
\end{align}
We estimate the variances of the two sums separately. Note that 
(for a fixed $t$)
all $A_{ij}$
and $B_{ijk}$ are uniformly bounded.
Two variables $A_{ij}$ and $A_{i'j'}$ are independent unless they have at
least one common index. Hence, using symmetry,
\begin{align}\label{ela}
  \Var\Bigpar{\sum_{i,j:i\sim j}A_{ij}}
&\le C 
\bigabs{
\set{(i,j,i',j')\in[n]^4:i\sim j,\, i'\sim j',\, \set{i,j}\cap\set{i',j'}\ge1}}
\notag\\&
\le C 
\bigabs{\set{(i,j,j')\in[n]^3:i\sim j,\, i\sim j'}}
\notag\\&
=C\sumin d_i^2.
\end{align}
For $B_{ijk}$, we first note that $\E\xpar{B_{ijk}\mid T_i,T_j}=0$ since 
$\E\xpar{\III_k(T_i)\mid T_i}=0$ by  \eqref{eiii};
thus $\E B_{ijk}=0$. 
Similarly, if, say, $k\notin\set{i',j',k'}$, then, by conditioning on
$T_\ell$ for all $\ell\neq k$, we have
$\E \xpar{B_{ijk}B_{i'j'k'}}=0$.
Hence, if $\E \xpar{B_{ijk}B_{i'j'k'}}\neq0$, then each of
$j,k,j',k'$ equals one of the other five indices. Since we assume that
$i,j,k$ are distinct, as well as $i',j',k'$, this is possible only if either
$\set{i,j,k}=\set{i',j',k'}$ or $\bigabs{\set{i,j,k}\cap\set{i',j',k'}}=2$,
and in the latter case furthermore
$\set{i,j,k}\cap\set{i',j',k'}=\set{j,k}=\set{j',k'}$, and thus $j,i,k,i'$
form a cycle $\sC_4$ in $G$. In the first case, there are at most 6 choices
of $i',j',k'$ for each $(i,j,k)$. Consequently, we obtain, using \eqref{ela},
\begin{align}\label{elb}
  \Var\Bigpar{\sum_{i,j,k:k\sim i\sim j}B_{ijk}}
&\le C 
\bigabs{\set{(i,j,k)\in[n]^3:k\sim i\sim j}}
+ C\hom(\sC_4,G)
\notag\\&
=C\sumin d_i^2  + C\hom(\sC_4,G)
.\end{align}

It follows from \eqref{el1}, \eqref{ela} and \eqref{elb} that
\begin{align}
  \Var [\QQQ,\QQQ]_t &
\le C\sumin d_i^2  + C\hom(\sC_4,G).
\end{align}
The inequality \eqref{vQQ} then follows because by an inequality by
\citet{Sidorenko} (see also \cite{SJ370}), 
\begin{align}
  \hom(\sC_4,G) \le   \hom(\sP_4,G)
\le  \hom(\sK_{1,3},G) \le \sumin d_i^3.
\end{align}
Alternatively, this follows because we have
\begin{align}\label{salt}
  \hom(\sC_4,G) &
\le   \hom(\sP_4,G)
\le  \sum_{i,j:\,i\sim j} d_i d_j
\le  \sum_{i,j:\,i\sim j} \tfrac12\bigpar{d_i^2+ d_j^2}
= \sum_{i,j:\,i\sim j} d_i^2
\notag\\&
= \sumin d_i^3.
\end{align}

Similarly, we write \eqref{qQN} as 
\begin{align}
  \label{luc1}
[\QQQ,\NNN]_t=\sumin\sum_{j\sim i} D_{ij},
\end{align}
where
\begin{align}
  \label{dij}
D_{ij}:= \indic{T_i\le t} (1-T_i)\qww \III_j(T_i)
,\end{align}
noting that
(for a given $t$), the random variables $D_{ij}$
are uniformly bounded; furthermore, 
$D_{ij}$ and $D_{i'j'}$ are independent unless they have at least one common
index.
Consequently, as in \eqref{ela},
\begin{align}\label{el7}
\Var [\QQQ,\NNN]_t = 
  \Var\Bigpar{\sum_{i,j:i\sim j}D_{ij}}
\le C \sumin d_i^2.  
\end{align}

Finally, we have, with $D_{ij}$ as in \eqref{dij},
\begin{align}
  \label{luc2}
[\QQQ,\SSS]_t=\sumin\sum_{j\sim i} d_i D_{ij}, 
\end{align}
and it follows similarly, using symmetry as in \eqref{ela},
\begin{align}\label{eld}
  \Var[\QQQ,\SSS]_t
&\le C \sum_{i\sim j,\, i'\sim j',\, \set{i,j}\cap\set{i',j'}\ge1}d_id_{i'}
\notag\\&
\le C 
\sum_{i,j,j':\,i\sim j,\, i\sim j'} (d_i+ d_j)(d_i+ d_{j'})
.\end{align}
It is easily seen that this sum can be estimated by
$C \sum_{|E(H)|\le 4}\hom(H,G)$,
summing over connected graphs $H$ with at most 4 edges.
Consequently,  
using again Sidorenko's inequality \cite{Sidorenko,SJ370} (or a more
complicated version of \eqref{salt}, which we leave to the reader)
\begin{align}\label{eld2}
  \Var[\QQQ,\SSS]_t &\le 
C \sum_{|E(H)|\le 4}\hom(H,G)
\le C \hom(\sK_{1,4},G)
=C\sumin d_i^4.
\end{align}
This completes the proof of \eqref{vQQ}--\eqref{vSN}.
\end{proof}

We will also use a version where we ``normalize'' $\SS(t)$ and $\SSS(t)$
by subtracting the average degree $\bd$ from the degrees in the definition.
We define, \cf{} \eqref{SSt} and \eqref{SSSt},
\begin{align}\label{RRt}
  \RR(t)&:=\sumin (d_i-\bd)\II_i(t)=\SS(t)-\bd\NN(t),
\\\label{RRRt}
\RRR(t)&:=\sumin (d_i-\bd)\III_i(t) =(1-t)\qw\RR(t)
=\SSS(t)-\bd\NNN(t).
\end{align}
Then $\RRR(t)$ is a martingale on $\oio$, with quadratic (co)variation,
similarly to \eqref{qSS}--\eqref{qQN},
\begin{align}
  \label{qRR}
[\RRR,\RRR]_t &= \sumin \indic{T_i\le t}(d_i-\bd)^2 (1-T_i)\qww,
\\\label{qQR}
  [\QQQ,\RRR]_t&=
\sumin \indic{T_i\le t} (1-T_i)\qww (d_i-\bd){\sum_{j\sim i}\III_j(T_i)},
\\\label{qRN}
  [\RRR,\NNN]_t&=
\sumin \indic{T_i\le t} (1-T_i)\qww (d_i-\bd)
.
\end{align}
Hence, or by \eqref{RRRt} and \eqref{eSS}--\eqref{eSN},
recalling \eqref{bd},
\begin{align}\label{eRR}
\E[\RRR,\RRR]_t &= \sumin (d_i-\bd)^2\cdot \frac{t}{1-t},
\\\label{eQR}
\E[\QQQ,\RRR]_t&=0,
\\\label{eRN}
\E[\RRR,\NNN]_t &= \sumin (d_i-\bd)\cdot \frac{t}{1-t}=0
.\end{align}

\begin{lemma}  \label{LU}
  For each fixed $t\in\oio$, we have, with constants $C$ that depend on $t$,
\begin{align}
\label{vRR}
\Var [\RRR,\RRR]_t &
\le C \sumin (d_i-\bd)^4,
\\\label{vQR}
\Var [\QQQ,\RRR]_t &
\le C \pmaxdx^2\sumin (d_i-\bd)^2,
\\\label{vRN}
\Var [\RRR,\NNN]_t &
\le C \sumin (d_i-\bd)^2
.  \end{align}
\end{lemma}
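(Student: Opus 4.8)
The plan is to mirror the proof of \refL{LV} line by line, since the three quadratic (co)variations \eqref{qRR}, \eqref{qRN}, and \eqref{qQR} have exactly the same structure as \eqref{qSS}, \eqref{qSN}, and \eqref{qQS}, with each degree $d_i$ replaced by the centred degree $d_i-\bd$ (and $\SSS$ replaced by $\RRR$). First I would dispose of \eqref{vRR} and \eqref{vRN}, which need no new idea. By \eqref{qRR} and \eqref{qRN}, both $[\RRR,\RRR]_t$ and $[\RRR,\NNN]_t$ are sums over $i\in[n]$ of terms depending only on the single time $T_i$, and hence are sums of $n$ independent random variables. For a fixed $t$ the $i$-th term of $[\RRR,\RRR]_t$ is bounded by $C(d_i-\bd)^2$ and that of $[\RRR,\NNN]_t$ by $C\abs{d_i-\bd}$; so these terms have variances $\le C(d_i-\bd)^4$ and $\le C(d_i-\bd)^2$, and summing gives \eqref{vRR} and \eqref{vRN}. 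This is verbatim the argument used for \eqref{vNN}, \eqref{vSS}, and \eqref{vSN}.

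The only real work is \eqref{vQR}. Writing \eqref{qQR} as $[\QQQ,\RRR]_t=\sum_i\sum_{j\sim i}(d_i-\bd)D_{ij}$ with $D_{ij}$ as in \eqref{dij}, I would use, exactly as for \eqref{qQS}, that $D_{ij}$ and $D_{i'j'}$ are independent---hence uncorrelated---whenever $\set{i,j}\cap\set{i',j'}=\emptyset$, while for a fixed $t$ all $D_{ij}$ are uniformly bounded. Expanding the variance therefore leaves only the pairs of ordered edges $(i,j),(i',j')$ with overlapping index sets, each covariance bounded by a constant:
\begin{align}
\Var[\QQQ,\RRR]_t \le C\sum_{\substack{i\sim j,\ i'\sim j'\\ \set{i,j}\cap\set{i',j'}\neq\emptyset}} \abs{d_i-\bd}\,\abs{d_{i'}-\bd}.
\end{align}

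The point where \eqref{vQR} departs from \eqref{vQS} is that the centred weights $d_i-\bd$ may be negative, so the homomorphism-count bound of \eqref{eld}--\eqref{eld2} is unavailable; instead I would keep each weight attached to the single centre vertex $i$. Applying $\abs{d_i-\bd}\abs{d_{i'}-\bd}\le\frac12\bigpar{(d_i-\bd)^2+(d_{i'}-\bd)^2}$ and using the symmetry of the summation under swapping the two edges reduces the bound to $C\sum(d_i-\bd)^2$, summed over all quadruples $(i,j,i',j')$ with $i\sim j$, $i'\sim j'$, and overlapping index sets. For a fixed centre $i$ there are $d_i$ choices of $j$, and for each the number of ordered edges $(i',j')$ meeting $\set{i,j}$ is at most $2d_i+2d_j\le 4\maxdx$; hence the number of such quadruples is at most $d_i\cdot 4\maxdx\le 4\maxdx^2$. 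Summing the weight $(d_i-\bd)^2$ over $i$ then gives $\Var[\QQQ,\RRR]_t\le C\maxdx^2\sumin(d_i-\bd)^2$, which is \eqref{vQR}. The main obstacle is thus only the bookkeeping in this last count---in particular seeing that one factor $\maxdx$ comes from bounding the number of neighbours $j$ and the other from the edges meeting $\set{i,j}$; everything else is a direct transcription of the proof of \refL{LV}.
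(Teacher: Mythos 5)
Your proof is correct and follows essentially the same route as the paper's: the same one-line treatment of \eqref{vRR} and \eqref{vRN} via independence of the summands, and the same reduction of $\Var[\QQQ,\RRR]_t$ to $C\sum|d_i-\bd|\,|d_{i'}-\bd|$ over pairs of ordered edges with overlapping vertex sets. The only (harmless) difference is in the final bookkeeping: you apply $|d_i-\bd|\,|d_{i'}-\bd|\le\tfrac12\bigpar{(d_i-\bd)^2+(d_{i'}-\bd)^2}$ first and then count the at most $4\pmaxdx^2$ quadruples per centre $i$, whereas the paper first passes to cherries $j\sim i\sim j'$, expands the product into three sums, and bounds the last one by the \CSineq; both yield $C\pmaxdx^2\sumin(d_i-\bd)^2$.
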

\begin{proof}
  First, \eqref{vRR} and \eqref{vRN} follow from \eqref{qRR} and \eqref{qRN}
since the terms in the sums are
  independent, similarly to \eqref{vSS} and \eqref{vNN} in \refL{LV}.

Next, by \eqref{RRRt}, \eqref{luc1}, and \eqref{luc2},
\begin{align}
  [\QQQ,\RRR]_t
=[\QQQ,\SSS]_t - \bd[\QQQ,\NNN]_t
=\sumin\sum_{j\sim i}(d_i-\bd)D_{ij},
\end{align}
with $D_{ij}$ defined in \eqref{dij}.
Recall that for a fixed $t$, 
the random variables $D_{ij}$
are uniformly bounded, and that
$D_{ij}$ and $D_{i'j'}$ are independent unless they have at least one common
index.
Hence, similarly to \eqref{eld}, it follows that
\begin{align}\label{luc3}
  \Var[\QQQ,\RRR]_t
&\le C \sum_{i\sim j,\, i'\sim j',\, \set{i,j}\cap\set{i',j'}\ge1}
 |d_i-\bd|\,|d_{i'}-\bd|
\notag\\&
\le C 
\sum_{i,j,j':\,i\sim j,\, i\sim j'} (|d_i-\bd|+ |d_j-\bd|)(|d_i-\bd|+ |d_{j'}-\bd|)
\notag\\&
=C\sum_{i}d_i^2|d_i-\bd|^2
+ C \sum_{i,j:i\sim j}d_i|d_i-\bd||d_j-\bd|
+\sum_i\Bigpar{\sum_{j\sim i}|d_j-\bd|}^2
.\end{align}
The first sum on the \rhs{} of \eqref{luc3} is clearly at most 
$\pmaxdx^2\sumin (d_i-\bd)^2$.
The second sum is
\begin{align}
 & \le \maxdx \sum_{i,j:i\sim j}\bigpar{|d_i-\bd|^2+|d_j-\bd|^2}
\notag\\&
=\maxdx\sum_id_i|d_i-\bd|^2+\maxdx\sum_jd_j|d_j-\bd|^2
\notag\\&
\le 2\pmaxdx^2\sum_i|d_i-\bd|^2.
\end{align}
Finally, the third sum is, by the \CSineq,
\begin{align}
 & \le \sum_id_i\sum_{j\sim i}|d_j-\bd|^2
\le \pmaxdx\sum_jd_j|d_j-\bd|^2
\le \pmaxdx^2\sum_j|d_j-\bd|^2.
\end{align}
Combining these estimates, we obtain \eqref{vQR}.
\end{proof}

\section{Proofs of convergence and main theorems}
\label{Spf2}

We are now prepared to show convergence of the martingales defined in
\refS{Spf1}, and then to prove the theorems in \refS{Smain}.
Although \refTs{TA} and \ref{TB} can be proved as special cases of
\refT{TC}, we have chosen to give separate proofs of the three theorems,
using the same general method but with some variations;
this illustrates the differences between the cases.
We begin with the sparse case in \refT{TA}, which shows the main ideas
without unnecessary complications.
We first state a lemma, where we assume that the graphs $G\nn$ are
non-random.
(Thus the assumptions \eqref{ta1}--\eqref{ta3} are replaced by their
non-random counterparts \eqref{la1}--\eqref{la3}.)

In this section we use convergence in both $D\oio$ and $D\oi$.
For processes defined on $\oi$, we will always use convergence in $D\oi$,
even when this is not explicitly said.

\begin{lemma}\label{LA}
Assume that $G\nn$ is a sequence of non-random graphs with $V(G\nn)=[n]$
such that, 
as in \refT{TA},
\begin{gather}
\frac{1}{n}\sumin d\nn_i=\frac{2|E\nn|}{n}\to\dx\in\ooo,\label{la1}
\\
\frac{1}{n}\sumin (d\nn_i)^2\to\chix\in\ooo,\label{la2}
\\
\maxdx\nn:=\max_i d\nn_i = o(n\qq).\label{la3}
\end{gather}
\begin{romenumerate}
  
\item \label{LA1}
  Then, in $D\oio$,
  \begin{align}\label{lao}
    n\qqw\bigpar{\QQQ\nn(t),\SSS\nn(t),\NNN\nn(t)}
\dto \ZZZ(t)=\bigpar{\ZZZ_\sQ(t),\ZZZ_\sS(t),\ZZZ_\sN(t)},
  \end{align}
where $\ZZZ(t)$ is a continuous Gaussian process on $\oio$ with $\E
\ZZZ(t)=0$
and covariance function,
for $ 0\le s\le t<1$,
\begin{align}\label{la4}
 \Cov\bigpar{\ZZZ(s),\ZZZ(t)}
=
 \E\bigpar{\ZZZ(s)\ZZZ(t)\tr}
=
\gSSS(s):=
  \begin{pmatrix}
\frac{\dx}{2}\frac{s^2}{(1-s)^2} & 0 & 0 \\
0 & \chix\frac{s}{1-s}  & \dx \frac{s}{1-s}\\
0 & \dx\frac{s}{1-s} &  \frac{s}{1-s}  
  \end{pmatrix}
.\end{align}

\item\label{LA2}
Similarly, in $D\oi$,
  \begin{align}\label{la}
    n\qqw\bigpar{\QQ\nn(t),\SS\nn(t),\NN\nn(t)}
\dto \ZZ(t)=\bigpar{\ZZ_\sQ(t),\ZZ_\sS(t),\ZZ_\sN(t)},
  \end{align}
where $\ZZ(t)$ is a continuous Gaussian process on $\oi$ with $\E\ZZ(t)=0$
and covariance function,
for $ 0\le s\le t\le1$,
\begin{align}\label{laa}
 \Cov\bigpar{\ZZ(s),\ZZ(t)}
=
\gSS(s,t):=
  \begin{pmatrix}
\frac{\dx}{2}{s^2}{(1-t)^2} & 0 & 0 \\
0 & \chix s(1-t)  & \dx s(1-t)\\
0 & \dx s(1-t) &  s(1-t)  
  \end{pmatrix}
.\end{align}

\end{romenumerate}
\end{lemma}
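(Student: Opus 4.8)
The plan is to obtain part \ref{LA1} directly from the martingale central limit theorem \refP{P:JS}, applied to the $3$-dimensional martingale $M\nn(t):=n\qqw\bigpar{\QQQ\nn(t),\SSS\nn(t),\NNN\nn(t)}$ on $\oio$, and then to deduce part \ref{LA2} from part \ref{LA1} by the extension lemma \refL{Loi} (in its vector form \refR{Roi}), exploiting that $\QQ=(1-t)^2\QQQ$, $\SS=(1-t)\SSS$, and $\NN=(1-t)\NNN$ by \eqref{QQQt}--\eqref{NNNt}. Since \refP{P:JS} treats vector-valued martingales, joint convergence comes for free once the pairwise conditions are checked.

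For part \ref{LA1} I would verify the two hypotheses of \refP{P:JS}. Condition \eqref{pjs1} is immediate from the exact expectations \eqref{eQQ}--\eqref{eSN}: after dividing by $n$, \eqref{eQQ} gives $n\qw\E[\QQQ,\QQQ]_t=\frac{|E\nn|}{n}\frac{t^2}{(1-t)^2}\to\frac{\dx}{2}\frac{t^2}{(1-t)^2}$ by \eqref{la1}; \eqref{eSS} gives $\chi\nn\frac{t}{1-t}\to\chix\frac{t}{1-t}$ by \eqref{la2}; \eqref{eNN} gives $\frac{t}{1-t}$; \eqref{eSN} gives $\frac{2|E\nn|}{n}\frac{t}{1-t}\to\dx\frac{t}{1-t}$; and \eqref{eQS}, \eqref{eQN} vanish identically. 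These are precisely the entries of the matrix function in \eqref{la4}, which is continuous on $\oio$. The substantive step is condition \eqref{pjs2}, where the bounds of \refL{LV} together with the maximum-degree hypothesis \eqref{la3} are decisive. Writing $\sumin (d\nn_i)^2=n\chi\nn$ and $\maxdx=\max_i d\nn_i=o(n\qq)$, I would estimate $n\qww\Var[\QQQ,\QQQ]_t\le Cn\qww\sumin d_i^3\le Cn\qww\maxdx\sumin d_i^2=Cn\qw\maxdx\,\chi\nn\to0$, and similarly $n\qww\Var[\SSS,\SSS]_t$ and $n\qww\Var[\QQQ,\SSS]_t$ are at most $Cn\qww\sumin d_i^4\le Cn\qww\maxdx^2\sumin d_i^2=Cn\qw\maxdx^2\chi\nn\to0$; the remaining three variances, bounded by $Cn$ or $C\sumin d_i^2$, give $O(n\qw)$ after normalization and also vanish. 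Thus \refP{P:JS} yields \eqref{lao} with $\ZZZ$ a continuous centred Gaussian process whose covariance for $s\le t$ is $\gSSS(s)$, since for martingales $\E\bigpar{M_i(s)M_j(t)}=\sigma_{ij}(s)$.

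For part \ref{LA2} I would feed the same martingales $M\nn$ into \refL{Loi} and \refR{Roi}. The required $L^2$-bound \eqref{loi1} follows from \eqref{qm3} and the expectations just computed: uniformly in $n$, $n\qw\E\bigabs{\QQQ\nn(t)}^2\le C(1-t)\qww$ and $n\qw\E\bigabs{\SSS\nn(t)}^2,\ n\qw\E\bigabs{\NNN\nn(t)}^2\le C(1-t)\qw$, so I may take $a_\sQ=2$ and $a_\sS=a_\sN=1$. Choosing the exponents $b_\sQ=2$, $b_\sS=b_\sN=1$ satisfies $b_i>a_i/2$ in each coordinate, and the rescaled processes $(1-t)^{b_i}M_i\nn$ are exactly $n\qqw\QQ\nn$, $n\qqw\SS\nn$, $n\qqw\NN\nn$ by \eqref{QQQt}--\eqref{NNNt}. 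Hence \refR{Roi} gives convergence in $D\oi$ to $\ZZ(t)=\bigpar{(1-t)^2\ZZZ_\sQ(t),(1-t)\ZZZ_\sS(t),(1-t)\ZZZ_\sN(t)}$, which is continuous on $\oi$ with $\ZZ(1)=0$. Its covariance \eqref{laa} then comes from \eqref{la4} by the deterministic scaling: for $s\le t$, e.g.\ $\Cov\bigpar{\ZZ_\sQ(s),\ZZ_\sQ(t)}=(1-s)^2(1-t)^2\frac{\dx}{2}\frac{s^2}{(1-s)^2}=\frac{\dx}{2}s^2(1-t)^2$, while the $(\sS,\sN)$-block of $\gSSS(s)$ is multiplied by $(1-s)(1-t)$, turning each $\frac{s}{1-s}$ into $s(1-t)$, which reproduces $\gSS(s,t)$.

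The main obstacle is the variance step \eqref{pjs2} in part \ref{LA1}: this is the only place where the higher degree moments $\sumin d_i^3$ and $\sumin d_i^4$ enter, and it is exactly where the maximum-degree condition \eqref{la3} is indispensable (without it the normalized processes retain macroscopic jumps, \cf{} \refE{Ebad1}). Everything else is bookkeeping once the exact expectations and the variance bounds of \refL{LV} are in hand.
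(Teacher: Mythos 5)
Your proposal is correct and follows essentially the same route as the paper: \refP{P:JS} applied to the martingale $M\nn(t)=n\qqw(\QQQ\nn,\SSS\nn,\NNN\nn)$, with \eqref{pjs1} read off from \eqref{eQQ}--\eqref{eSN} and \eqref{pjs2} from \refL{LV} together with \eqref{la3}, followed by \refL{Loi}/\refR{Roi} with exponents $a_\sQ=b_\sQ=2$, $a_\sS=b_\sS=a_\sN=b_\sN=1$ for part \ref{LA2}. The only cosmetic difference is that you bound the six variances one by one, whereas the paper subsumes them all under the single estimate $Cn\qww\bigpar{\sumin (d_i\nn)^4+n}$.
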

\begin{proof}
\pfitemref{LA1}
Consider the vector-valued
martingale on $\oio$ defined by
\begin{align}\label{maa}
M\nn(t)=\bigpar{M\nn_\sQ(t),M\nn_\sS(t),M\nn_\sN(t)}
:=n\qqw\bigpar{\QQQ\nn(t),\SSS\nn(t),\NNN\nn(t)}.
  \end{align}
For any fixed $t\in\oio$, \eqref{eQQ}--\eqref{eSN} 
together with \eqref{la1}--\eqref{la2} and \eqref{la4}
show that
the matrix of quadratic covariations
has expectation
\begin{align}\label{mab}
  \E[M\nn,(M\nn)\tr]_t \to \gSSS(t),
\end{align}
while \refL{LV} shows that for each $i,j\in\set{\sQ,\sS,\sN}$ we have, for
the corresponding $\XXXX,\YYY\in\set{\QQQ,\SSS,\NNN}$,
using \eqref{la2}--\eqref{la3},
\begin{align}\label{mac}
  \Var\bigsqpar{M\nn_i,M\nn_j}_t&
= \Var\bigpar{n\qw[\XXXX\nn,\YYY\nn]_t}
\le C n\qww\Bigpar{\sumin \bigpar{d_i\nn}^4+n}
\notag\\&
\le C n\qww \xpar{\maxdx\nn}^2\sumin \bigpar{d_i\nn}^2+Cn\qw
\to0.
\end{align}
Hence, \refP{P:JS} applies and yields the result.

\pfitemref{LA2}
Convergence in $D\oio$ follows immediately from \ref{LA1} and the relations
\eqref{QQQt}--\eqref{NNNt}, defining 
$\ZZ_\sQ(t):=(1-t)^2\ZZZ_\sQ(t)$,
$\ZZ_\sS(t):=(1-t)\ZZZ_\sS(t)$, and
$\ZZ_\sN(t):=(1-t)\ZZZ_\sN(t)$
for $t\in\oio$. We define further $\ZZ_\sQ(1):=\ZZ_\sS(1):=\ZZ_\sN(1):=0$;
continuity of $\ZZ(t)$ at $t=1$ and convergence in $D\oi$ then follows by
\refL{Loi} and \refR{Roi}, taking $a_\sQ=b_\sQ=2$ and $a_\sS=b_\sS=a_\sN=b_\sN=1$
and recalling \eqref{qm3} and \eqref{eQQ}--\eqref{eNN}.
\end{proof}

\begin{remark}\label{RBB}
  In particular, $\ZZ_\sN(t)$ has covariance function $s(1-t)$ for $0\le
  s\le t\le 1$, and is thus a Brownian bridge $\BR(t)$, see \eqref{brb}.
The convergence of $n\qqw\NN\nn(t)=n\qqw(N\nn(t)-nt)$ 
to a Brownian bridge is a well-known fact,
since by \eqref{nt}, $n\qw N\nn(t)$ is the empirical distribution function of
the \iid{} uniformly distributed random variables $T_i$, $i\in[n]$;
see \eg{} \cite[Theorems 16.4 and 13.1]{Billingsley}.
This holds for any graphs $G\nn$ (without any conditions on the degrees), 
since the edges do not affect $N\nn(t)$, and therefore this result for
$\NN\nn(t)$
will return in other
proofs below.
\end{remark}


\begin{proof}[Proof of \refT{TA}]
Assume first that the graphs $G\nn$ are non-random.
In particular, the variables in \eqref{ta1}--\eqref{ta3} are non-random, and 
the limits there are thus usual limits of real numbers.
Hence, \refL{LA} applies. We  prove first the \ctime{} result \ref{TAc}, and
then use it to derive the \dtime{} result \ref{TAd}.

\pfitemx{\emph{$G\nn$ non-random}, \ref{TAc}}
By  \eqref{b2a} and \refL{LA}\ref{LA2},
using also \refL{Ladd},
\begin{align}\label{tb1}
  n\qqw\bigpar{L\nn(t)-t^2|E\nn|}
=
  n\qqw\bigpar{\QQ\nn(t)+t\SS\nn(t)}
\dto 
Z(t):=
\ZZ_\sQ(t)+t\ZZ_\sS(t)
\end{align}
in $D\oi$.
The process $Z(t)$ is clearly continuous and Gaussian, with mean $\E Z(t)=0$
and covariance function, using \eqref{la}--\eqref{laa},
\begin{align}
  \Cov\bigpar{Z(s),Z(t)}&
=  \Cov\bigpar{\ZZ_\sQ(s),\ZZ_\sQ(t)}
+st \Cov\bigpar{\ZZ_\sS(s),\ZZ_\sS(t)}
\notag\\&
=\frac{\dx}2s^2(1-t)^2+\chix s^2t(1-t).
\end{align}
This proves \eqref{tac}--\eqref{tac2}.

\pfitemx{\emph{$G\nn$ non-random}, \ref{TAd}}
The proof just given for \ref{TAc} shows that the result \eqref{tb1} holds
jointly with $n\qqw \NN(t)\dto \ZZ_\sN(t)$.
Hence, we can apply \refT{TN}, with $\iX\nn_k:=\iL_k(t)$, $X\nn(t):=L\nn(t)$,
$W(t)=\ZZ_\sN(t)$,
$a_n=n\qqw$, $b_n=2|E\nn|$, $f(t)=t^2/2$, and thus $c=\lim_\ntoo 2|E\nn|/n=\dx$.
This yields \eqref{tad} with $\iZ(t):=Z(t)-\dx t \ZZ_\sN(t)$,
and \eqref{tnc} yields the covariance function \eqref{tad2}.

\emph{$G\nn$ random}:
Finally, consider the general case when $G\nn$ may be random.
By the Skorohod coupling theorem \cite[Theorem~4.30]{Kallenberg},
we may for convenience
assume that the limits in \eqref{ta1}--\eqref{ta3} hold a.s.
We then condition on the sequence $(G\nn)\xoo$, and note that thus
\as{} the deterministic case just proved applies to the sequence.
Hence, \as{} the conclusions \eqref{tad} and \eqref{tac} hold conditionally
on $(G\nn)\xoo$. Since the limits have distributions determined by
\eqref{tad2} and \eqref{tac2} which do not depend on the 
sequence $(G\nn)\xoo$, it follows that \eqref{tad} and \eqref{tac} hold
unconditionally too.
\end{proof}

We turn to the regular case, again beginning with a lemma for non-random
 $G\nn$.

\begin{lemma}\label{LB}
Assume that $G\nn$ is a sequence of non-random graphs with $V(G\nn)=[n]$
such that, 
as in \refT{TB},
$G\nn$ is regular with degree $1\le d\nn=o(n)$.
Then $\SS\nn(t)=d\nn\NN\nn(t)$
and $\SSS\nn(t)=d\nn\NNN\nn(t)$. Furthermore:
\begin{romenumerate}
  
\item \label{LB1}
We have,  in $D\oio$,
  \begin{align}\label{lbo}
   \bigpar{(nd\nn)\qqw\QQQ\nn(t),n\qqw\NNN\nn(t)}
\dto \ZZZ(t)=\bigpar{\ZZZ_\sQ(t),\ZZZ_\sN(t)},
  \end{align}
where $\ZZZ_\sQ(t)$ and $\ZZZ_\sN(t)$ are independent
continuous Gaussian processs on
$\oio$ with means $0$
and covariance functions,
for $ 0\le s\le t<1$,
\begin{align}
 \Cov\Bigpar{\ZZZ_\sQ(s),\ZZZ_\sQ(t)}&
=
\frac{1}{2}\frac{s^2}{(1-s)^2},
\\
 \Cov\Bigpar{\ZZZ_\sN(s),\ZZZ_\sN(t)}&
=
 \frac{s}{1-s}
.\end{align}

\item\label{LB2}
Similarly, in $D\oi$,
  \begin{align}\label{lb}
   \bigpar{(nd\nn)\qqw\QQ\nn(t),n\qqw\NN\nn(t)}
\dto \ZZ(t)=\bigpar{\ZZ_\sQ(t),\ZZ_\sN(t)},
  \end{align}
where $\ZZ_\sQ(t)$ and $\ZZ_\sN(t)$ are 
independent continuous Gaussian processs on
$\oio$ with means $0$
and covariance functions,
for $ 0\le s\le t\le1$,
\begin{align}\label{lb2}
 \Cov\Bigpar{\ZZ_\sQ(s),\ZZ_\sQ(t)}&
=
\frac{1}{2}{s^2}{(1-t)^2},
\\\label{lb2b}
 \Cov\Bigpar{\ZZ_\sN(s),\ZZ_\sN(t)}&
=
s(1-t)
.\end{align}

\end{romenumerate}
\end{lemma}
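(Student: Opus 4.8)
Since $G\nn$ is regular with degree $d\nn$, every $d_i$ equals $d\nn$, so \eqref{SSt}, \eqref{NNt}, \eqref{SSSt}, and \eqref{NNNt} give at once $\SS\nn(t)=d\nn\NN\nn(t)$ and $\SSS\nn(t)=d\nn\NNN\nn(t)$, as asserted; in particular it suffices to track the pair $(\QQQ\nn,\NNN\nn)$ in place of the full triple used in \refL{LA}. For part~\ref{LB1} the plan is to imitate the proof of \refL{LA}\ref{LA1}: consider the two-dimensional martingale $M\nn(t):=\bigpar{(nd\nn)\qqw\QQQ\nn(t),\,n\qqw\NNN\nn(t)}$ on $\oio$ and check the hypotheses \eqref{pjs1}--\eqref{pjs2} of \refP{P:JS}.

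For the expected quadratic covariations I would use \eqref{eQQ}, \eqref{eNN}, and \eqref{eQN} together with the identity $|E\nn|=nd\nn/2$ (from \eqref{di}). The first diagonal entry is $(nd\nn)\qw\E[\QQQ\nn,\QQQ\nn]_t=(nd\nn)\qw|E\nn|\,t^2/(1-t)^2=\tfrac12 t^2/(1-t)^2$, the second is $n\qw\E[\NNN\nn,\NNN\nn]_t=t/(1-t)$, and the off-diagonal entry is $(nd\nn)\qqw n\qqw\E[\QQQ\nn,\NNN\nn]_t=0$. Thus the limiting covariance matrix is diagonal, which simultaneously produces the stated covariance functions and, since the limit is Gaussian, the asserted independence of $\ZZZ_\sQ$ and $\ZZZ_\sN$.

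The one substantive point---and the place where the hypothesis $d\nn=o(n)$ is essential---is the negligibility condition \eqref{pjs2}. By \refL{LV} with all $d_i=d\nn$ we have $\Var[\QQQ\nn,\QQQ\nn]_t\le Cn(d\nn)^3$, $\Var[\NNN\nn,\NNN\nn]_t\le Cn$, and $\Var[\QQQ\nn,\NNN\nn]_t\le Cn(d\nn)^2$; after dividing by the respective normalizations $(nd\nn)^2$, $n^2$, and $n^2d\nn$, these become $O(d\nn/n)$, $O(1/n)$, and $O(d\nn/n)$, all of which tend to $0$ exactly because $d\nn=o(n)$. Hence \refP{P:JS} applies and yields \eqref{lbo}, proving part~\ref{LB1}.

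For part~\ref{LB2} I would transfer from $D\oio$ to $D\oi$ just as in \refL{LA}\ref{LA2}. Using \eqref{QQQt} and \eqref{NNNt}, set $\ZZ_\sQ(t):=(1-t)^2\ZZZ_\sQ(t)$ and $\ZZ_\sN(t):=(1-t)\ZZZ_\sN(t)$ for $t\in\oio$, with $\ZZ_\sQ(1):=\ZZ_\sN(1):=0$. The second-moment bounds from \eqref{eQQ}, \eqref{eNN}, and \eqref{qm3} verify \eqref{loi1} with exponents $a_\sQ=2$ and $a_\sN=1$, so \refL{Loi} together with \refR{Roi}, taking $b_\sQ=2$ and $b_\sN=1$ (both satisfying $b>a/2$), gives continuity of the limits at $t=1$ and convergence in $D\oi$. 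Finally the covariances transform as $(1-s)^2(1-t)^2\cdot\tfrac12 s^2/(1-s)^2=\tfrac12 s^2(1-t)^2$ and $(1-s)(1-t)\cdot s/(1-s)=s(1-t)$, matching \eqref{lb2}--\eqref{lb2b}.
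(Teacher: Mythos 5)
Your proposal is correct and follows essentially the same route as the paper: the same two-dimensional martingale $\bigpar{(nd\nn)\qqw\QQQ\nn,\,n\qqw\NNN\nn}$, the same computation of the expected quadratic covariations via \eqref{eQQ}, \eqref{eNN}, \eqref{eQN} and $|E\nn|=nd\nn/2$, the same variance bounds from \refL{LV} reducing to $O(d\nn/n)$ and $O(1/n)$, and the same transfer to $D\oi$ via \refL{Loi} and \refR{Roi} with the exponents $a_\sQ=b_\sQ=2$, $a_\sN=b_\sN=1$. No gaps.
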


\begin{proof}
Since $G\nn$ is regular with $d\nn_i=d\nn$ for every $i\in[n]$, it follows from 
\eqref{SSt}--\eqref{NNt}  and \eqref{SSSt}--\eqref{NNNt} 
that $\SS\nn(t)=d\nn\NN\nn(t)$ and $\SSS\nn(t)=d\nn\NNN\nn(t)$.

The rest of the proof is similar to the proof of \refL{LA}.
We ignore $\SSS\nn$ and define now
\begin{align}\label{mba}
M\nn(t)=\bigpar{M\nn_\sQ(t),M\nn_\sN(t)}
:=\bigpar{(nd\nn)\qqw\QQQ\nn(t),n\qqw\NNN\nn(t)}.
  \end{align}
For any fixed $t\in\oio$, \eqref{eQQ}, \eqref{eNN} and \eqref{eQN} 
together with $|E\nn|=nd\nn/2$
show that
the matrix of quadratic covariations
has expectation
\begin{align}\label{mbb}
  \E[M\nn,(M\nn)\tr]_t 
=
\gS(t):=
 \begin{pmatrix}
\frac{1}{2}\frac{t^2}{(1-t)^2} & 0
\\
0 & \frac{t}{1-t}
\end{pmatrix}
\end{align}
(in this case an identity for all $n$),
while \refL{LV} shows that 
\begin{align}
  \Var [M\nn_\sQ,M\nn_\sQ]_t& 
= \Var\bigpar{(nd\nn)\qw[\QQQ\nn,\QQQ\nn]_t}
\le (nd\nn)\qww C n(d\nn)^3
\notag\\&
= C d\nn/n = o(1),
\\
  \Var [M\nn_\sQ,M\nn_\sN]_t&
 = \Var\bigpar{(n^2d\nn)\qqw[\QQQ\nn,\NNN\nn]_t}
\le (n^2d\nn)\qw C n(d\nn)^2
\notag\\&
= C d\nn/n = o(1),
\\
  \Var [M\nn_\sN,M\nn_\sN]_t& 
= \Var\bigpar{n\qw[\NNN\nn,\NNN\nn]_t}
\le n\qww C n
= C n\qw = o(1),
\end{align}
Hence, \refP{P:JS} applies and yields \ref{LB1}, with $\Cov(\ZZZ(t))=\gS(t)$
in \eqref{mbb}.
The independence of $\ZZZ_\sQ$ and $\ZZZ_\sN$ follows from the fact that 
the  matrix $\gS(t)$  is diagonal, and thus
all covariances 
$\Cov\bigpar{\ZZZ_\sQ(s),\ZZZ_\sN(t)}=\Var\ZZZ_{\sQ}(s)$ ($0\le s\le t$)
vanish.

Finally, \ref{LB2} follows as in the proof of \refL{LA} by \refL{Loi} and
\refR{Roi}, using \eqref{eQQ} and \eqref{eNN}.
\end{proof}

\begin{proof}[Proof of \refT{TB}]
  As in the proof of \refT{TA}, we may by conditioning assume that each
  $G\nn$ is non-random. 
Then \refL{LB} applies.
As in the proof of \refT{TA}, we first consider continuous time, but this
time we cannot derive \ref{TB1} from \ref{TB2}, since the normalizing
factors are different; hence we derive instead
first the intermediate \ctime{} result \eqref{lb4} below.

\pfitemref{TB1}
The decomposition \eqref{b2a} yields, noting that 
$\SS\nn(t)=d\nn\NN\nn(t)$ by \refL{LB}
and $|E\nn|=nd\nn/2$ by \eqref{di},
and using also \eqref{NNt},
\begin{align}\label{nov3}
  L\nn(t)-\frac{d\nn}{2n}N\nn(t)^2&
=\QQ\nn(t)+td\nn\NN\nn(t)+t^2\frac{nd\nn}{2}
 -\frac{d\nn}{2n}\bigpar{\NN\nn(t)+nt}^2 
\notag\\&
=\QQ\nn(t)-\frac{d\nn}{2n}{\NN\nn(t)}^2.
\end{align}
As a consequence of \eqref{lb}, we have
$\sup_t|\NN\nn(t)|=\Op\bigpar{n\qq}$;
recall that we write this as
$\NN\nn(t)=\Opx\bigpar{n\qq}$.
Hence,  \eqref{nov3} implies, using $d\nn=o(n)$,
\eqref{lb}, and \refL{L0},
\begin{align}\label{lb4}
(nd\nn)\qqw\Bigpar{  L\nn(t)-\frac{d\nn}{2n}N\nn(t)^2}&
=(nd\nn)\qqw\QQ\nn(t)+\Opx((d\nn)\qq n\qqw)
\notag\\&
=(nd\nn)\qqw\QQ\nn(t)+\opx(1)
\notag\\&
\dto \ZZ_\sQ(t).
\end{align}
Furthermore, \eqref{lb} implies that \eqref{lb4} 
holds jointly with $n\qqw(N\nn(t)-nt)\dto \ZZ_\sN(t)$.
 Thus by \refT{TN} (with $b_n=f(t)=c=0$), 
or  in this simple case directly by substituting $\tau_\fnt$ for $t$
in \eqref{lb4},
\begin{align}\label{lb5}
(nd\nn)\qqw\Bigpar{  \iL_\fnt-\frac{d\nn}{2n}\fnt^2}&
\dto \ZZ_\sQ(t).
\end{align}
Furthermore, we have
\begin{align}
  \frac{d\nn}{2n}\fnt^2-t^2|E\nn|
=
  \frac{d\nn}{2n}\bigpar{\fnt^2-(nt)^2}
=O(d\nn)=o\bigpar{(nd\nn)\qq},
\end{align}
and thus \eqref{lb5} implies \eqref{tbd} with 
$\iZ(t)=\ZZ_\sQ(t)$.
Hence, \eqref{tbd2} holds by \eqref{lb2}.

\pfitemx{\ref{TB2} and \ref{TB3}}
It follows from \eqref{bd}, \eqref{NNt} and \eqref{lb} that
\begin{align}\label{lb6}
\frac{1}{n\qq d\nn}\Bigpar{\frac{d\nn}{2n}N\nn(t)^2-t^2|E\nn|}
&=
 \frac1{2 n^{3/2}}\bigpar{N\nn(t)^2-n^2t^2}
\notag\\&
=
\frac1{2 n^{3/2}}\bigpar{\NN\nn(t)^2+2nt\NN\nn(t)}
\notag\\&
\dto t \ZZ_\sN(t).
\end{align}
Furthermore, this holds jointly with \eqref{lb4}
and its consequence
\begin{align}\label{lb7}
(n\qq d\nn)\qw\Bigpar{  L\nn(t)-\frac{d\nn}{2n}N\nn(t)^2}&
\dto \doo\qqw\ZZ_\sQ(t).
\end{align}
Combining \eqref{lb7} and \eqref{lb6} yields
\begin{align}\label{lb8}
&\frac{1}{n\qq d\nn}\bigpar{L\nn(t)-t^2|E\nn|}
\dto Z(t):=\doo\qqw\ZZ_\sQ(t) + t \ZZ_\sN(t).
\end{align}
This proves \eqref{tbdii}, and \eqref{lb2}--\eqref{lb2b} imply that the
covariance 
function is given by \eqref{tbdii2}; when $\doo=\infty$ this simplifies to 
\eqref{tbdii3}, and we see also directly from \eqref{lb8} that in this case
$Z(t)=t\ZZ_\sN(t)=t\BR(t)$, see \refR{RBB}.
\end{proof}

Finally, we treat the general case in \refT{TC}, again beginning with a lemma.

\begin{lemma}\label{LC}
  Assume that $G\nn$ is a sequence of non-random graphs with $V(G\nn)=[n]$
and that $\gb_n$ is a sequence of positive constants
such that, 
as in \refT{TC},
for some  constants $\gl_1,\gl_2\in\ooo$, 
we have, as \ntoo,
\begin{gather}
\gb_n=o(n),\label{lc0}
\\
\frac{2|E\nn|}{\gb_n^2}=\frac{n\bd\nn}{\gb_n^2}\to\gl_1,\label{lc1}
\\
\frac{1}{\gb_n^2}\sumin \bigpar{d\nn_i-\bd\nn}^2\to\gl_2,\label{lc2}
\\
\maxdx\nn =o(\gb_n).\label{lc3}
\end{gather}
\begin{romenumerate}  
\item \label{LC1}
  Then, in $D\oio$,
  \begin{align}\label{lco}
    \bigpar{\gb_n\qw\QQQ\nn(t),\gb_n\qw\RRR\nn(t),n\qqw\NNN\nn(t)}
\dto \ZZZ(t)=\bigpar{\ZZZ_\sQ(t),\ZZZ_\sR(t),\ZZZ_\sN(t)},
  \end{align}
where $\ZZZ_\sQ(t)$, $\ZZZ_\sR(t)$, and $\ZZZ_\sN(t)$ are independent
continuous
Gaussian processes on $\oio$ with means $0$
and covariance functions, 
for $ 0\le s\le t< 1$,
\begin{align}
 \Cov\Bigpar{\ZZZ_\sQ(s),\ZZZ_\sQ(t)}&
=
\frac{\gl_1}{2}\frac{s^2}{(1-s)^2},
\\
 \Cov\Bigpar{\ZZZ_\sR(s),\ZZZ_\sR(t)}&
=
\gl_2\frac{s}{1-s}, 
\\
 \Cov\Bigpar{\ZZZ_\sN(s),\ZZZ_\sN(t)}&
=
 \frac{s}{1-s}
.\end{align}

\item\label{LC2}
Similarly, in $D\oi$,
  \begin{align}\label{lc}
    \bigpar{\gb_n\qw\QQ\nn(t),\gb_n\qw\RR\nn(t),n\qqw\NN\nn(t)}
\dto \ZZ(t)=\bigpar{\ZZ_\sQ(t),\ZZ_\sR(t),\ZZ_\sN(t)},
  \end{align}
where $\ZZ_\sQ(t)$, $\ZZ_\sR(t)$, and $\ZZ_\sN(t)$ are independent
continuous
Gaussian processes on $\oi$ with means $0$
and covariance functions, 
for $ 0\le s\le t\le1$,
\begin{align}\label{lcQ}
 \Cov\Bigpar{\ZZ_\sQ(s),\ZZ_\sQ(t)}&
=
\frac{\gl_1}{2}{s^2}{(1-t)^2},
\\\label{lcR}
 \Cov\Bigpar{\ZZ_\sR(s),\ZZ_\sR(t)}&
=
\gl_2{s}{(1-t)}, 
\\\label{lcN}
 \Cov\Bigpar{\ZZ_\sN(s),\ZZ_\sN(t)}&
=s(1-t)
.\end{align}
\end{romenumerate}
\end{lemma}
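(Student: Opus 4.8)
The plan is to follow the template of the proofs of \refL{LA} and \refL{LB}: realise the scaled triple in \eqref{lco} as a vector-valued martingale on $\oio$, verify the hypotheses \eqref{pjs1}--\eqref{pjs2} of \refP{P:JS}, and then transfer the conclusion from $D\oio$ to $D\oi$ by \refL{Loi} and \refR{Roi}. For part \ref{LC1} I would set
\begin{align*}
M\nn(t):=\bigpar{\gb_n\qw\QQQ\nn(t),\,\gb_n\qw\RRR\nn(t),\,n\qqw\NNN\nn(t)},
\end{align*}
a martingale by \refL{LM} and \eqref{RRRt}, and study its matrix of quadratic covariations.

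The expected quadratic covariations are already computed: \eqref{eQQ}, \eqref{eRR} and \eqref{eNN} give the three diagonal entries, while the cross terms \eqref{eQR}, \eqref{eQN} and \eqref{eRN} vanish \emph{exactly}, for every $n$. After scaling this yields
\begin{align*}
\E[M\nn,(M\nn)\tr]_t\to
\begin{pmatrix}
\frac{\gl_1}{2}\frac{t^2}{(1-t)^2} & 0 & 0\\
0 & \gl_2\frac{t}{1-t} & 0\\
0 & 0 & \frac{t}{1-t}
\end{pmatrix},
\end{align*}
using \eqref{lc1} (so that $|E\nn|/\gb_n^2\to\gl_1/2$) for the first entry and \eqref{lc2} for the second; the vanishing off-diagonal entries immediately give the asserted independence of the three limit components. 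It then remains to check that each scaled variance in \eqref{pjs2} tends to $0$, and here I would feed the bounds of \refL{LV} and \refL{LU} through the scalings. The $\sN\sN$, $\sR\sR$, $\sQ\sR$, $\sQ\sN$ and $\sR\sN$ terms are routine: after scaling they are dominated by $n\qw$, by $(\maxdx\nn/\gb_n)^2\cdot\gb_n\qww\sum_i(d_i\nn-\bd\nn)^2$, or by $n\qw\cdot\gb_n\qww\sum_i(d_i\nn)^2$, each tending to $0$ by \eqref{lc1}--\eqref{lc3} (for the last one splitting $\sum_i(d_i\nn)^2=\sum_i(d_i\nn-\bd\nn)^2+n(\bd\nn)^2$ and using $\bd\nn\le\maxdx\nn$).

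The only delicate point, and the one I would flag as the main obstacle, is the $\sQ\sQ$ variance: \refL{LV} gives only $\Var[\QQQ\nn,\QQQ\nn]_t\le C\sum_i(d_i\nn)^3$, and $\gb_n\qww\sum_i(d_i\nn)^2$ need not be bounded under \eqref{lc1}--\eqref{lc3} alone. The resolution is to measure $\sum_i d_i^3$ against the number of edges rather than against $\sum_i d_i^2$, namely $\sum_i(d_i\nn)^3\le(\maxdx\nn)^2\sum_i d_i\nn=2(\maxdx\nn)^2|E\nn|$, whence
\begin{align*}
\gb_n^{-4}\sum_i(d_i\nn)^3\le\Bigpar{\frac{\maxdx\nn}{\gb_n}}^2\frac{2|E\nn|}{\gb_n^2}\to 0\cdot\gl_1=0
\end{align*}
by \eqref{lc3} and \eqref{lc1}. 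With \eqref{pjs1}--\eqref{pjs2} in hand, \refP{P:JS} delivers \ref{LC1}.

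Part \ref{LC2} would then follow exactly as the corresponding step of \refL{LA}. Using \eqref{QQQt}, \eqref{RRRt} and \eqref{NNNt} I would put $\ZZ_\sQ(t):=(1-t)^2\ZZZ_\sQ(t)$, $\ZZ_\sR(t):=(1-t)\ZZZ_\sR(t)$ and $\ZZ_\sN(t):=(1-t)\ZZZ_\sN(t)$, extended by $0$ at $t=1$; convergence in $D\oio$ is then immediate from \ref{LC1}, and continuity at $t=1$ together with the upgrade to $D\oi$ follows from \refL{Loi} and \refR{Roi} with exponents $a_\sQ=b_\sQ=2$ and $a_\sR=b_\sR=a_\sN=b_\sN=1$, the required uniform bounds $\E|M\nn_i(t)|^2\le C(1-t)^{-a_i}$ being \eqref{qm3} applied to \eqref{eQQ}, \eqref{eRR} and \eqref{eNN}. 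The covariances \eqref{lcQ}--\eqref{lcN} are read off from those of \ref{LC1} after multiplying by the deterministic factors, and independence is preserved under this.
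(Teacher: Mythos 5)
Your proposal is correct and follows essentially the same route as the paper: the same martingale $M\nn(t)=\bigpar{\gb_n\qw\QQQ\nn,\gb_n\qw\RRR\nn,n\qqw\NNN\nn}$, the same use of \eqref{eQQ}, \eqref{eRR}, \eqref{eNN} and the vanishing cross-expectations to get the diagonal limit matrix (hence independence), the same variance bounds from \refLs{LV} and \ref{LU} — including the key step $\sum_i (d_i\nn)^3\le(\maxdx\nn)^2\sum_i d_i\nn$ for the $\sQ\sQ$ term, which is exactly the paper's estimate — and the same passage to $D\oi$ via \refL{Loi} and \refR{Roi} with exponents $a_\sQ=b_\sQ=2$, $a_\sR=b_\sR=a_\sN=b_\sN=1$.
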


\begin{proof}
  This is similar to the proofs of \refLs{LA} and \ref{LB}.
We now define the martingale
\begin{align}\label{mca}
M\nn(t)=\bigpar{M\nn_\sQ(t),M\nn_\sR(t),M\nn_\sN(t)}
:=\bigpar{\gb_n\qw\QQQ\nn(t),\gb_n\qw\RRR\nn(t),n\qqw\NNN\nn(t)}.
  \end{align}
For any fixed $t\in\oio$, \eqref{eQQ}, \eqref{eNN}, \eqref{eQN} and
\eqref{eRR}--\eqref{eRN}  
together with \eqref{lc1}--\eqref{lc2}
show that
the matrix of quadratic covariations
has expectation
\begin{align}\label{mcb}
  \E[M\nn,(M\nn)\tr]_t \to \gSSS(t)
:=
\begin{pmatrix}
\frac{\gl_1}{2}\frac{t^2}{(1-t)^2} & 0 & 0 \\
0 & \gl_2\frac{t}{1-t}  & 0\\
0 & 0 &  \frac{t}{1-t}  
  \end{pmatrix}
.\end{align}
Similarly, \refLs{LV} and \ref{LU} 
together with \eqref{lc0}--\eqref{lc3} 
show that 
(omitting superscripts ${}\nn$ for convenience),
for any fixed $t\in\oio$,
\begin{align}
  \Var[M\nnq_\sQ,M\nnq_\sQ]_t 
&=\gb_n^{-4}\Var[\QQQ\nnq,\QQQ\nnq]_t
\le C \gb_n^{-4}\sumin d_i\nnq^3
\notag\\&
\le C \gb_n^{-4}\gD\nnq^2\sumin d_i
= C \Bigpar{\frac{\gD\nnq}{\gb_n}}^2\cdot\frac{2|E\nnq|}{\gb_n^2}
\to0,
\\
  \Var[M\nnq_\sR,M\nnq_\sR]_t 
&=\gb_n^{-4}\Var[\RRR\nnq,\RRR\nnq]_t
\le C \gb_n^{-4}\sumin (d_i\nnq-\bd\nnq)^4
\notag\\&
\le C \gb_n^{-4}\gD\nnq^2\sumin (d\nnq_i-\bd\nnq)^2
=C \Bigpar{\frac{\gD\nnq}{\gb_n}}^2
 \cdot\frac{1}{\gb_n^2}\sumin (d\nnq_i-\bd\nnq)^2
\to0,
\\
  \Var[M\nnq_\sN,M\nnq_\sN]_t 
&=n^{-2}\Var[\NNN\nnq,\NNN\nnq]_t\le Cn\qw\to0,
\\
  \Var[M\nnq_\sQ,M\nnq_\sR]_t 
&=\gb_n^{-4}\Var[\QQQ\nnq,\RRR\nnq]_t
\le C \gb_n^{-4}\gD\nnq^2\sumin (d\nnq_i-\bd\nnq)^2
\to0,
\\
  \Var[M\nnq_\sQ,M\nnq_\sN]_t 
&=\gb_n^{-2}n\qw\Var[\QQQ\nnq,\NNN\nnq]_t
\le C \gb_n^{-2}\frac{1}{n}\sumin d\nnq_i^2
\notag\\&
= C \gb_n^{-2}\Bigpar{\frac{1}{n}\sumin (d\nnq_i-\bd\nnq)^2+\bd\nnq^2}
\le \frac C n + C \lrpar{\frac{\gb_n}{n}\cdot\frac{n\bd}{\gb_n^2}}^2
\to0,
\\
  \Var[M\nnq_\sR,M\nnq_\sN]_t 
&=\gb_n^{-2}n\qw\Var[\RRR\nnq,\NNN\nnq]_t
\le C n\qw\gb_n^{-2}\sumin (d\nnq_i-\bd\nnq)^2
\le C n\qw 
\to0
.\end{align}
Hence, \refP{P:JS} applies and yields the result; the three components of
the limit process are independent since the matrix $ \gSSS(t)$ in
\eqref{mcb} is diagonal.

Finally, \ref{LC2} follows as in the proof of \refL{LA} by \refL{Loi} and
\refR{Roi}, using \eqref{eQQ}, \eqref{eNN}, and \eqref{eRR}
together with \eqref{lc1}--\eqref{lc2}.
\end{proof}

\begin{proof}[Proof of \refT{TC}]
As in the proof of \refTs{TA} and \ref{TB}, we may by conditioning assume
that $G\nn$ are non-random. Then \refL{LC} applies.  
As in the proof of \refT{TB}, we first consider continuous time and derive
an intermediate result.

\pfitemref{TCd}
We now use \eqref{RRt} and \eqref{di}
to write the decomposition \eqref{b2a}
as
\begin{align}\label{luc6}
  L\nn(t)
=\QQ\nn(t)+t\RR\nn(t)+t\bd\nn\NN\nn(t)+t^2\frac{n\bd\nn}{2}
.\end{align}
Hence,
using also \eqref{NNt},
\cf{} the regular case \eqref{nov3} where $\RR\nn(t)=0$,
\begin{align}\label{luc7}
  L\nn(t)-\frac{\bd\nn}{2n}N\nn(t)^2&
=L\nn(t) -\frac{\bd\nn}{2n}\bigpar{\NN\nn(t)+nt}^2 
\notag\\&
=\QQ\nn(t)+t\RR\nn(t)-\frac{\bd\nn}{2n}{\NN\nn(t)}^2.
\end{align}
As a consequence of \eqref{lc}, we have
$\NN\nn(t)=\Opx\bigpar{n\qq}$.
Hence,  \eqref{luc7} implies, using $\bd\nn\le\gD\nn=o(\gb_n)$,
\eqref{lc}, and \refL{L0},
\begin{align}\label{lc4}
\gb_n\qw\Bigpar{  L\nn(t)-\frac{\bd\nn}{2n}N\nn(t)^2}&
=\gb_n\qw\QQ\nn(t)+\gb_n\qw t\RR\nn(t)+\Opx(\bd\nn/\gb_n)
\notag\\&
=\gb_n\qw\QQ\nn(t)+t\gb_n\qw\RR\nn(t)+\opx(1)
\notag\\&
\dto \ZZ_\sQ(t)+t\ZZ_\sR(t).
\end{align}
Furthermore, \eqref{lc} implies that \eqref{lc4} 
holds jointly with $n\qqw(N\nn(t)-nt)\dto \ZZ_\sN(t)$.
Thus by \refT{TN} (with $b_n=f(t)=c=0$), 
or directly by substituting $\tau_\fnt$ for $t$
in \eqref{lc4},
\begin{align}\label{lc5}
\gb_n\qw\Bigpar{  \iL_\fnt-\frac{\bd\nn}{2n}\fnt^2}&
\dto \iZ(t):=\ZZ_\sQ(t)+t\ZZ_\sR(t).
\end{align}
Furthermore, we have
\begin{align}
  \frac{\bd\nn}{2n}\fnt^2-t^2|E\nn|
=
  \frac{\bd\nn}{2n}\bigpar{\fnt^2-(nt)^2}
=O(\bd\nn)=O(\gD\nn)=o\bigpar{\gb_n},
\end{align}
and thus \eqref{lc5} implies \eqref{tcd}.
We have \eqref{tcd2} by \eqref{lcQ}--\eqref{lcR}.

\pfitemref{TCc}
By \eqref{bd}, \eqref{NNt} and \eqref{lc}, 
we have,
as in \eqref{lb6}, 
\begin{align}\label{lc6}
\frac{1}{n\qq\bd\nn}\Bigpar{\frac{\bd\nn}{2n}N\nn(t)^2-t^2|E\nn|}
&=
\frac1{2 n^{3/2}}\bigpar{\NN\nn(t)^2+2nt\NN\nn(t)}
\notag\\&
\dto t \ZZ_\sN(t).
\end{align}
(The case $\bd\nn=0$ is trivial and may be excluded.)
Furthermore, this holds jointly with \eqref{lc4}.

\pfitemref{TCca}
If $\ga<\infty$, then \eqref{tc4} and \eqref{lc6} imply
\begin{align}\label{lc6x}
\frac{1}{\gb_n}\Bigpar{\frac{\bd\nn}{2n}N\nn(t)^2-t^2|E\nn|}
\dto \ga t \ZZ_\sN(t),
\end{align}
jointly with \eqref{lc4}.
Consequently, recalling also \eqref{bd}, 
\eqref{tcc} holds with 
\begin{align}\label{lc7}
Z(t):=
\iZ(t)+\ga t \ZZ_\sN
=\ZZ_\sQ(t)+t\ZZ_\sR(t) +\ga t \ZZ_\sN.
\end{align}
The covariance function \eqref{tcc2} follows from \eqref{lcQ}--\eqref{lcN}.

\pfitemref{TCcb}
If $\ga=\infty$, then $\gb_n=\op\xpar{n\qq\bd\nn}$. 
 In this case, $L\nn(t)$ is dominated by the contribution from
\eqref{lc6}.
More precisely, \eqref{lc4} now implies
\begin{align}\label{lc4b}
\frac{1}{n\qq\bd\nn}\Bigpar{  L\nn(t)-\frac{\bd\nn}{2n}N\nn(t)^2}&
\dto0,
\end{align}
which together with \eqref{lc6} yields \eqref{tcdii} with $Z(t):=t\ZZ_\sN(t)$.
By \refR{RBB}, $\ZZ_\sN(t)$ is a Brownian bridge $\BR(t)$,
and the result follows.
\end{proof}

\section{Number of components}\label{Scomp}

Consider the case when $G$ is a tree; 
in this case, 
the visible part of the graph is a forest. Let $\iK\nn_k$ and $K\nn(t)$ 
be the number of components in the visible forest at time $k$ or $t$,
repectively, for the \dtime{} and \ctime{} versions.
\begin{theorem}\label{TK}
  Assume that $G\nn$ is a sequence of deterministic or random trees with
  $V(G\nn)=[n]$, and that \eqref{ta2} and \eqref{ta3} hold.
Let $\gamx:=\chix-4$.  
For the number of components in the visible forest, we then have, in $D\oi$,
\begin{align}\label{ktad}
  n\qqw\bigpar{\iK\nn_\fnt-t(1-t)n}
\dto
\iZ(t),
\end{align}
 and
\begin{align}\label{ktac}
  n\qqw\bigpar{K\nn(t)-t(1-t)n}
\dto
\bcZ(t),
\end{align}
where $\iZ(t)$ and $\bcZ(t)$ are continuous Gaussian processes with
$\E\iZ(t)=\E\bcZ(t)=0$ and covariance functions given by, 
for $ 0\le s\le t\le1$,
\begin{align}\label{ktad2}
 \Cov\bigpar{\iZ(s),\iZ(t)}
=
{s^2}{(1-t)^2} + \gamx s^2t(1-t)
,\end{align}
as in \eqref{tad2},
and
\begin{align}\label{ktac2}
 \Cov\bigpar{\bcZ(s),\bcZ(t)}
=
{s^2}{(1-t)^2} + \gamx s^2t(1-t)+s(1-2s)(1-t)(1-2t)
.\end{align}
\end{theorem}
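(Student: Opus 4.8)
The plan is to exploit the elementary fact that a forest with $v$ vertices and $e$ edges has exactly $v-e$ components. Since each $G\nn$ is a tree, every visible subgraph is a forest, so the number of visible components equals the number of visible vertices minus the number of visible edges. In discrete time there are exactly $k$ visible vertices at step $k$, giving $\iK\nn_k=k-\iL\nn_k$, while in continuous time $K\nn(t)=N\nn(t)-L\nn(t)$. This reduces the whole statement to the joint behaviour of $\iL\nn$, $L\nn$, and $N\nn$, which is already controlled by \refL{LA} and \refC{CA}. As in the proofs of \refTs{TA}--\ref{TC}, I would first reduce to non-random $G\nn$ by conditioning via the Skorohod coupling theorem, since the limits do not depend on the sequence $(G\nn)$.

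For the discrete-time statement I would simply rewrite $\iK\nn_\fnt-t(1-t)n$. Using $|E\nn|=n-1$, so that $nt^2=t^2|E\nn|+t^2$, one gets $\iK\nn_\fnt-t(1-t)n=(\fnt-nt)+t^2-(\iL\nn_\fnt-t^2|E\nn|)$, where the first two terms are uniformly bounded, hence $\opx(n\qq)$. Then \refC{CA} and \refL{L0} yield convergence to $-\iZ(t)$, which is a centred Gaussian process with the same covariance as $\iZ(t)$; this is exactly \eqref{ktad2} (with $\dx=2$). So the discrete case is essentially immediate.

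For the continuous-time statement I would write, using $N\nn(t)=\NN\nn(t)+nt$ (see \eqref{NNt}) and the decomposition \eqref{b2a}, the identity $K\nn(t)-t(1-t)n=\NN\nn(t)+t^2-(L\nn(t)-t^2|E\nn|)$. The joint convergence in \refL{LA}\ref{LA2} supplies $n\qqw(L\nn(t)-t^2|E\nn|,\NN\nn(t))\dto(\ZZ_\sQ(t)+t\ZZ_\sS(t),\ZZ_\sN(t))$, using that the continuous limit of \refT{TA} is $Z(t)=\ZZ_\sQ(t)+t\ZZ_\sS(t)$. After discarding the $n\qqw t^2$ term, \refL{L0} and \refL{Ladd} give $n\qqw(K\nn(t)-t(1-t)n)\dto\bcZ(t):=\ZZ_\sN(t)-\ZZ_\sQ(t)-t\ZZ_\sS(t)$, a centred continuous Gaussian process as a linear combination of the $\ZZ$-components. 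The only genuine computation is then to evaluate $\Cov(\bcZ(s),\bcZ(t))$ from the matrix \eqref{laa}: since $\ZZ_\sQ$ is uncorrelated with both $\ZZ_\sS$ and $\ZZ_\sN$, only the $(\sS,\sN)$ cross term $\dx s(1-t)$ (with $\dx=2$), the three diagonal entries, and the scalar factors $s,t$ survive, and one rearranges the resulting polynomial into the form \eqref{ktac2}.

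The hard part will be pinning down the extra summand $s(1-2s)(1-t)(1-2t)$ in \eqref{ktac2}, which is what distinguishes the continuous case from the discrete one. It arises precisely from the correlation between the vertex-counting process $\ZZ_\sN$ and the degree-sum process $\ZZ_\sS$ that also sits inside $Z(t)$: subtracting $Z$ from $\ZZ_\sN$ produces cross terms $-t\,\Cov(\ZZ_\sN(s),\ZZ_\sS(t))-s\,\Cov(\ZZ_\sS(s),\ZZ_\sN(t))$ beyond the Brownian-bridge variance of $\ZZ_\sN$. Checking that these, together with the $\ZZ_\sN$-variance $s(1-t)$ and the identity $s(1-2s)(1-t)(1-2t)=s(1-t)-2s^2(1-t)-2st(1-t)+4s^2t(1-t)$, recombine into exactly the claimed covariance is the one place where care is needed; everything else follows mechanically from the forest identity and the already-established limits in \refL{LA} and \refC{CA}.
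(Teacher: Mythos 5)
Your proposal is correct and follows essentially the same route as the paper: the forest identity $\iK\nn_k=k-\iL\nn_k$ and $K\nn(t)=N\nn(t)-L\nn(t)$, reduction to \refC{CA} and the joint convergence in \refL{LA}\ref{LA2} with $\bcZ(t)=\ZZ_\sN(t)-\ZZ_\sQ(t)-t\ZZ_\sS(t)$, and the covariance computation from \eqref{laa} (your expansion of $s(1-2s)(1-t)(1-2t)$ checks out). The only cosmetic difference is that the paper does not need a separate Skorohod/conditioning step here, since the required joint convergence is already packaged in \eqref{la} and \eqref{tb1}.
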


\begin{proof}
  In the \dtime{} version, at time $k$ the visible forest has
$k$ vertices and $\iL\nn_k$ edges, and thus 
\begin{align}\label{kl0}
\iK\nn_k=k-\iL\nn_k.
\end{align}
Hence,
\begin{align}\label{kl1}
\iK\nn_\fnt-t(1-t)n
=-\bigpar{\iL\nn_\fnt-t^2 n} + O(1)
.\end{align}
\refC{CA} applies and thus \eqref{tad}--\eqref{tac2} hold, with $\dx=2$ and
thus $\gamx=\chix-4$.
Hence, \eqref{ktad} follows from \eqref{kl1} and \eqref{tad}, together with
the fact that 
$-\iZ(t)\eqd \iZ(t)$ (as processes); the covariances \eqref{ktad2} 
are given in \eqref{tad2}.

Similarly, recalling \eqref{NNt},
\begin{align}\label{kl2}
  K\nn(t)&=N\nn(t)-L\nn(t)
\notag\\&
=nt(1-t)+\NN\nn(t)-\bigpar{L\nn(t)-t^2|E\nn|}+O(1).
\end{align}
Hence, \eqref{ktac} follows from \eqref{la} and \eqref{tb1}
(which  hold jointly), with 
\begin{align}\label{rab}
\bcZ(t):=\ZZ_\sN(t)-\bigpar{\ZZ_\sQ(t)+t\ZZ_\sS(t)}
=-\ZZ_\sQ(t)-t\ZZ_\sS(t)+\ZZ_\sN(t),
\end{align}
and the covariances \eqref{ktac2} follow from \eqref{rab} and \eqref{laa}.
Alternatively, \eqref{ktac} and \eqref{ktac2} follow from  \refT{TN2} 
with $a_n=n\qqw$, $b_n=n$, $c=1$ and $f(t)=t(1-t)$.
\end{proof}

\begin{remark}
  The case $G=\sP_n$ was studied in \cite{SJ198}, where the results in
\refT{TK} where proved for this case (using the same method as here),
which solved a problem from \cite{afH}.
The main results in \cite{SJ198} concern asymptotics of the maximum
$\max_k \iK_k\nn$ and the difference
$\max_k\iK_k\nn-\iK_{\ceil{n/2}}\nn$.
(Note that the maximum is attained for $k$ close to $n/2$.)
The proofs of these results in \cite{SJ198} are easily modified to the
present more general case, using \eqref{ktad} or \eqref{ktac};
we leave the details to the reader.
\end{remark}

\refT{TK} extends easily to forests $G\nn$; again the visible part is always
a forest.
It would be interesting to have similar results for general graphs $G$, but
this seems to require different methods.

\begin{problem}
Study the number of components in the visible part of $G$
when $G$ is not a forest.  
\end{problem}

\section{Other small subgraphs}\label{Ssmall}
We have in this paper studied the evolution of 
the number of visible edges as a given graph is uncovered randomly;
this number equals the number of visible copies of $\sK_2$.
The methods in \refSs{Spf1}--\ref{Spf2}
above can be used to show similar results for the number of
visible copies of other small graphs.
This does not seem to involve any new ideas, but the calculations become
long and tedious, with more cases to treat, 
and since the paper already is long enough, we give only a brief
sketch for one instance, the number of $\sK_3$ (triangles).

As in \refS{Spf1}, we let $G$ be a given non-random graph on $[n]$
and consider the \ctime{} version of the uncovering process.
Let $T(t)$ be the number of visible triangles in $G$ at time $t\in\oi$.
Similarly to \eqref{b1} we have, 
using \eqref{I}--\eqref{II} and symmetry, and
summing over ordered triples of distinct indices  $i,j,k\in[n]$ 
satisfying the indicated conditions,
\begin{align}\label{t1}
6T(t)&=\sum_{ij,ik,jk\in E} I_i(t)I_j(t)I_k(t)
=\sum_{ij,ik,jk\in E} \bigpar{\II_i(t)+t}\bigpar{\II_j(t)+t}\bigpar{\II_k(t)+t}
\notag\\&
=\sum_{ij,ik,jk\in E} \II_i(t)\II_j(t)\II_k(t)
+3t \sum_{ij,ik,jk\in E}\II_i(t)\II_j(t)
+3t^2 \sum_{ij,ik,jk\in E}\II_i(t)
+6t^3 T(1)
\notag\\&
=:
\TT_1(t)+3t \TT_2(t)+3t^2\TT_3+6t^3T(1),
\end{align}
where $T(1)$ is the (non-random) number of triangles in $G$.
We define
\begin{align}\label{t2a}
  \TTT_1(t)&:=(1-t)^{-3}\TT_1(t)
=\sum_{i,j,k:ij,ik,jk\in E} \III_i(t)\III_j(t)\III_k(t),
\\\label{t2b}
  \TTT_2(t)&:=(1-t)^{-2}\TT_2(t)
=\sum_{i,j:ij\in E} \gd_{ij}\III_i(t)\III_j(t)
\\  \label{t2c}
\TTT_3(t)&:=(1-t)^{-1}\TT_3(t)
=\sumin 2\eps_{i}\III_i(t),
\end{align}
where $\gd_{ij}$ is the number of common neighbours of $i$ and $j$, and 
$\eps_i$ is the number of triangles in $G$ that contain $i$.

Similarly to \refS{Spf1}, $\TTT_\ell$ ($\ell=1,2,3$) are martingales on $\oio$,
and \eqref{t1} together with \eqref{t2a}--\eqref{t2c}
yields a decomposition of $T(t)$ into them.
The quadratic (co)variations and their expectations
are found as in \refS{Spf1}; we have for example
\begin{align}
  [\TTT_1,\TTT_1]_t=\sumin \indic{T_i\le t}(1-T_i)\qww
\biggpar{3\sum_{j,k:ij,ik,jk\in E}\III_j(t)\III_k(t)}^2
\end{align}
and as a consequence
\begin{align}
\E  [\TTT_1,\TTT_1]_t&
= \sumin\int_0^t (1-s)^{-2}\cdot 36 \eps_i\E
  \bigpar{\III_1(s)\III_2(s)}^2\dd s
\notag\\&
= 36\sumin\eps_i\int_0^t \frac{s^2}{(1-s)^{4}}\dd s
\notag\\&
= 12\sumin \eps_i\cdot\frac{t^3}{(1-t)^3}
= 36T(1)\frac{t^3}{(1-t)^3}
.\end{align}
Furthermore, it seems clear that
it is possible to 
estimate the variances of the quadratic (co)variations by 
arguments similar to the ones in the proof of \refL{LV}.
Then, for a sequence of graphs $G\nn$ with
suitable hypotheses on vertex degrees
and on other small structures in $G\nn$ (in particular the number of triangles
and the numbers of pairs of triangles sharing one or two vertices), 
\refP{P:JS} and \refL{Loi} would apply and show joint convergence,
 after suitable normalization,
of $\TTT_\ell(t)$ in $D\oio$ and of $\TT_\ell(t)$ i $D\oi$,
which by the decomposition \eqref{t1} would yield convergence of $T(t)-t^3T(1)$
to a Gaussian process.
Furthermore, corresponding results for $\iT_\fnt$ would follow from
\refT{TN}.

We have, however, not checked the details, nor found a precise set of 
conditions; we leave this to vigorous readers to explore further.

\appendix

\section{Derandomizing time}\label{AA}

We consider in this appendix the problem of recovering results
for a \dtime{} stochastic process such as $\iL_k$ from results for the
corresponding \ctime{} process  $L(t)$.
We adapt the method from \eg{} \cite{SJ79,SJ94} to the present situation.

We state the result  generally. We assume that
(for each $n\ge1$)
we have a given set of $n$ elements, which we may identify with $[n]$,
and that we draw its elements one by one in random order
(i.e., uniformly at random and without replacement);
we assume also that we have a \dtime{} stochastic process
$(\iX_k)_{k=0}^n=(\iX_k\nn)_{k=0}^n$,
where  $\iX_k$ is the value of some variable
when we have drawn $k$ objects.
In the corresponding \ctime{} model, 
we give, as in \refS{Snot}, each element $i\in[n]$ a random variable
$T_i\in\Uoi$
representing the time when $i$ is drawn; we assume that $T_1,\dots,T_n$ are
independent. Then $X(t)=X\nn(t)$ is the value of our variable at time $t\in\oi$.
(The variables $\iX_k$ and $X(t)$ may depend on other underlying random
variables too; in that case, these variables are assumed to be independent
of the order the elements are drawn and of $(T_i)_1^n$.)

We define $N(t)=N\nn(t)$ and $\tau_k=\tau\nn_k$ by \eqref{nt} and
\eqref{tauk}, and note that,
as in \eqref{a5}, we may for each $n$ couple the \dtime{} and \ctime{}
processes such that
\begin{align}\label{aa1}
  X(t)=\iX_{N(t)} 
\qquad\text{and conversely}\qquad
\iX_k=X\xpar{\tau_k},
\end{align}
for all $t\in\oi$ and $k=0,\dots,n$, respectively.
Note that the process $(N(t))_{t\in\oi}$, which describes the collection of times
$\set{T_i}_1^n$, 
is stochastically independent of the order of these times, and thus of the
process $(\iX_k)_k$.
Moreover, $n\qw N(t)$ is the empirical distribution of $\set{T_i}_1^n$, and
thus,
as noted in \refR{RBB}, 
it is well known, see \eg{} \cite[Theorems 16.4 and 13.1]{Billingsley},
that, as \ntoo, 
\begin{align}\label{bra}
  n\qqw\bigpar{N\nn(t)-nt}\dto \BR(t)
\qquad\text{in $D\oi$},
\end{align}
where $\BR$  is a Brownian bridge, 
see \refS{Snot1} and in particular \eqref{brb}.
(In fact, we have proved \eqref{bra} as part of \refLs{LA}, \ref{LB}, and
\ref{LC}.)

We say that a function $f(t)$ is continuously differentiable on $\oi$ if it
is continuously differentiable in $(0,1)$ and $f'(t)$ extends continuously
to $\oi$.

\begin{theorem}\label{TN}
Suppose that, for each $n$,
$(\iX\nn_k)_{k=0}^n$ and $(X\nn(t))_{t\in\oi}$ are  stochastic processes
as above; in particular we assume that \eqref{aa1} holds.
Suppose also that $(a_n)\xoo$ and $(b_n)\xoo$ are sequences of positive
numbers, that $f(t)$ is a continuously differentiable function on $\oi$, 
and that 
$\bigpar{Z(t),W(t)}_{t\in\oi}$ is a continuous $2$-dimensional 
Gaussian
process on $\oi$ such that, 
as \ntoo,
in $D\oi$,
\begin{align}
  \label{tn1}
\bigpar{a_n (X\nn(t)-b_n f(t)),n\qqw(N\nn(t)-nt)} \dto\bigpar{Z(t),W(t)}.
\end{align}
Suppose further that $n\qqw a_nb_n\to c\in[0,\infty)$.
Then,
\begin{align}
  \label{tn2}
a_n \bigpar{\iX\nn_{\floor{nt}}-b_n f(t)}
\dto \iZ(t):=Z(t) - cf'(t)W(t)
\qquad\text{in $D\oi$}.
\end{align}
Moreover, 
$\iZ(t)$ is also a continuous Gaussian process on $\oi$, 
it has mean $\E\iZ(t)=\E Z(t)$, and
covariance function
\begin{align}\label{tnc}
  \Cov(\iZ(s),\iZ(t)) = \Cov(Z(s),Z(t)) - c^2s(1-t)f'(s)f'(t),
\qquad 0\le s\le t\le 1.
\end{align}
\end{theorem}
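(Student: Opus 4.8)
The plan is to exploit the coupling $\iX\nn_k=X\nn(\tau_k)$ from \eqref{aa1}, which expresses the \dtime{} process as the given \ctime{} process evaluated at the (random) uncovering times $\tau_k$ of \eqref{tauk}, and to read off the limit \eqref{tn2} by a random time change. First I would invoke the Skorohod coupling theorem \cite[Theorem~4.30]{Kallenberg} to realize the joint convergence \eqref{tn1} almost surely in $D\oi$; since the limit $(Z,W)$ is continuous, this convergence is in fact uniform on $\oi$ (see \refSS{SSSkor}). Writing $\Phi_n(t):=a_n\bigpar{X\nn(t)-b_nf(t)}$ and $\widehat N_n(t):=n\qqw\bigpar{N\nn(t)-nt}$, so that $\Phi_n\to Z$ and $\widehat N_n\to W$ uniformly, I would decompose
\begin{align*}
a_n\bigpar{\iX\nn_\fnt-b_nf(t)}
=\Phi_n(\tau_\fnt)+a_nb_n\bigpar{f(\tau_\fnt)-f(t)}.
\end{align*}

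The first term would be handled by the time change: since $\widehat N_n\to W$ uniformly with $W$ bounded, $N\nn(s)/n\to s$ uniformly, and inverting these monotone functions gives $\tau_\fnt\to t$ uniformly; then $\Phi_n(\tau_\fnt)\to Z(t)$ uniformly because $\Phi_n\to Z$ uniformly and $Z$ is continuous. For the second term I would use that $N\nn(\tau_\fnt)=\fnt$ (the counting process increases by unit jumps at the a.s.\ distinct times $T_i$), which rewrites as $\widehat N_n(\tau_\fnt)=n\qqw\bigpar{\fnt-n\tau_\fnt}$ and hence
\begin{align*}
n\qq\bigpar{\tau_\fnt-t}=-\widehat N_n(\tau_\fnt)+O\bigpar{n\qqw}\to -W(t)
\end{align*}
uniformly, using $\tau_\fnt\to t$ and the uniform convergence and continuity of $W$. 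A first-order Taylor expansion of the $C^1$ function $f$, together with $a_nb_nn\qqw\to c$ and the uniform continuity of $f'$, then yields $a_nb_n\bigpar{f(\tau_\fnt)-f(t)}\to -cf'(t)W(t)$ uniformly. This inverse-function step, i.e.\ the uniform control of $n\qq\bigpar{\tau_\fnt-t}$ in terms of $W$, is the only genuinely delicate point and is where I expect the main obstacle to lie; everything else is routine once the convergence has been realized uniformly on the same probability space.

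Adding the two uniform limits gives $a_n\bigpar{\iX\nn_\fnt-b_nf(t)}\to Z(t)-cf'(t)W(t)$ uniformly almost surely, hence \eqref{tn2} with $\iZ(t)=Z(t)-cf'(t)W(t)$, and the limit is continuous. Since $(Z,W)$ is jointly Gaussian, $\iZ$ is a continuous Gaussian process, with mean $\E\iZ(t)=\E Z(t)-cf'(t)\E W(t)=\E Z(t)$, because $W$, being the limit in \eqref{bra}, is a Brownian bridge of mean $0$. For the covariance \eqref{tnc} I would invoke the structural fact noted in the setup: the counting process $(N\nn(t))_t$ is independent of the draw order, hence of $(\iX\nn_k)_k$. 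Consequently the prelimit processes $a_n\bigpar{\iX\nn_\fnt-b_nf(t)}$ and $\widehat N_n$ are independent for each $n$, so their limits $\iZ$ and $W$ are independent. Writing $Z=\iZ+cf'W$ and using $\iZ\perp W$ together with the Brownian bridge covariance $\Cov\bigpar{W(s),W(t)}=s(1-t)$ for $s\le t$ from \eqref{brb}, I would obtain
\begin{align*}
\Cov\bigpar{Z(s),Z(t)}=\Cov\bigpar{\iZ(s),\iZ(t)}+c^2f'(s)f'(t)\,s(1-t),
\end{align*}
which rearranges to \eqref{tnc}.
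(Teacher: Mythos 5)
Your proposal is correct and follows essentially the same route as the paper's proof: Skorohod coupling to upgrade \eqref{tn1} to a.s.\ uniform convergence, the identity $N\nn(\tau_k)=k$ to extract $n\qq(\tau_\fnt-t)\to -W(t)$ uniformly, a Taylor expansion of $f$ with $a_nb_nn\qqw\to c$ for the drift correction, and the independence of $(N\nn(t))_t$ from $(\iX\nn_k)_k$ to deduce $\iZ\perp W$ and hence \eqref{tnc}. The only cosmetic difference is that you keep $a_n$ throughout and organize the computation as an exact two-term decomposition, whereas the paper first normalizes to $a_n=1$ and substitutes the expansion of $\tau_\fnt$ directly into $X\nn$; the substance is identical.
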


\begin{remark}
  By \eqref{bra}, \eqref{tn1} implies that 
$(W(t))_{t\in\oi}\eqd(\BR(t))_{t\in\oi}$, 
so $W(t)$ is
  just a Brownian bridge.
Nevertheless, we keep the notation $W(t)$ since in general $Z(t)$ and $W(t)$
are dependent, and their joint distribution is important in \eqref{tn2}.
We may also note that \eqref{tn1} is equivalent to
the limits
\eqref{bra} and
\begin{align}\label{sy1}
a_n \bigpar{X\nn(t)-b_n f(t)} \dto Z(t)
\end{align}
holding jointly, for some particular coupling of $Z(t)$ and
$\BR(t)$ (which we then denote by $W(t)$).
\end{remark}

\begin{proof}
By replacing $\iX\nn_k$, $X\nn(t)$, and $b_n$ by $a_n\iX\nn_k$,
$a_nX\nn(t)$, and $a_nb_n$, respectively, 
we may for convenience assume that $a_n=1$ for all $n$.

By the Skorohod coupling theorem \cite[Theorem~4.30]{Kallenberg},
we may assume that the limit in \eqref{tn1} holds a.s.
Since convergence in $D\oi$ to a continuous limit is equivalent to uniform
convergence, this means that \as, as \ntoo,
\begin{align}
 X\nn(t)&=b_n f(t)+ Z(t) + o(1),\label{aa3}
\\
N\nn(t)&=nt+n\qq W(t)+o\bigpar{n\qq},\label{aa4}
\end{align}
uniformly for $t\in\oi$.
In particular, substituting $t=\tau\nn_k$ in \eqref{aa4}, we obtain, \as, 
\begin{align}\label{aa5a}
  k=N\nn\xpar{\tau\nn_k}
=n\tau\nn_k+n\qq W\xpar{\tau\nn_k}+o\bigpar{n\qq},
\end{align}
and thus
\begin{align}\label{aa5}
\tau\nn_k=\frac{k}{n}
-n\qqw W\xpar{\tau\nn_k}+o\bigpar{n\qqw},
\end{align}
uniformly for $0\le k\le n<\infty$.
Since $W(t)$ is a continuous function of $t$, it is bounded
(with a random bound), and thus
\eqref{aa5} implies in particular that \as
\begin{align}\label{aa6}
  \tau\nn_k=\frac{k}{n}+o(1)\qquad\text{uniformly in $k=0,\dots,n$}
.\end{align}
Furthermore, $W(t)$ is uniformly continuous on the compact interval $\oi$,
and thus \eqref{aa6} implies $W\xpar{\tau\nn_k}-W\xpar{k/n}=o(1)$,
uniformly in $k$.
Consequently, using \eqref{aa5} again, \as
\begin{align}\label{aa7}
\tau\nn_k
=\frac{k}{n}-n\qqw W(k/n)+o\bigpar{n\qqw},
\end{align}
and similarly, uniformly for  $t\in\oi$,
\begin{align}\label{aa8}
\tau\nn_{\floor{nt}}
=t-n\qqw W_{t}+o\bigpar{n\qqw}
=t+o(1).
\end{align}
We substitute this in \eqref{aa3} and obtain \as, 
recalling \eqref{aa1}
and the fact that $Z(t)$ is uniformly continuous, 
and using a Taylor expansion of $f$,
\begin{align}\label{aa9}
  \iX\nn_{\floor{nt}}&
=X\nn\xpar{\tau\nn_{\floor{nt}}}
=b_nf\bigpar{\tau\nn_{\floor{nt}}}+Z\xpar{\tau\nn_{\floor{nt}}}+o(1)
\notag\\&
=b_nf\bigpar{t-n\qqw W(t)+o\bigpar{n\qqw}}+Z(t)+o(1)
\notag\\&
=b_nf(t)- b_n f'(t)n\qqw W(t)+o\bigpar{b_nn\qqw}+Z(t)+o(1),
\end{align}
uniformly for $t\in\oi$.
The result \eqref{tn2} follows, since $b_nn\qqw=a_nb_nn\qqw\to c$,
and in particular $b_nn\qqw=O(1)$.


Finally,
we recall that, as noted above,  $(\iX\nn_k)_k$ and $(N\nn(t))_t$ are independent
for each $n$.
Since the proof shows that the limits \eqref{tn1} and \eqref{tn2} 
hold jointly, we conclude that the limits $(\iZ(t))$ and $(W(t))$
are independent. Since
$Z(t)=\iZ(t)+cf'(t)W(t)$
by  \eqref{tn2}, it follows that
\begin{align}\label{tn1c}
  \Cov(Z(s),Z(t)) = \Cov(\iZ(s),\iZ(t)) + c^2f'(s)f'(t)s(1-t),
\qquad 0\le s\le t\le 1,
\end{align}
and thus \eqref{tnc} holds.
\end{proof}

\begin{remark}
As just noted, the proof shows that the limit \eqref{tn2} 
holds jointly with \eqref{tn1}, and furthermore that $(\iZ(t))$ is
independent of $(W(t))$. In particular, for all $s,t\in\oi$, we have
$\Cov(\iZ(s),W(t))=0$, and thus by \eqref{tn2} and \eqref{brb} necessarily
\begin{align}
  \Cov(Z(s),W(t))=cf'(s)\Cov(W(s),W(t))
=
  \begin{cases}
    cf'(s)s(1-t), & 0\le s\le t\le 1,
\\ 
cf'(s)(1-s)t,
& 0\le t\le s\le 1.
  \end{cases}
.\end{align}
\end{remark}

We note also that a converse to \refT{TN} holds.

\begin{theorem}\label{TN2}
Suppose that (as in \refT{TN})
$(\iX\nn_k)_0^n$ and $(X\nn(t))_{t\in\oi}$ are
stochastic processes as above, that 
$(a_n)\xoo$ and $(b_n)\xoo$ are positive numbers, and that $f(t)$ is a 
continuously differentiable function on $\oi$.
Suppose further that $(\iZ(t))_{t\in\oi}$ is a continuous Gaussian process
such that
\begin{align}
  \label{tn2a}
a_n \bigpar{\iX\nn_{\floor{nt}}-b_n f(t)}
\dto \iZ(t) 
\qquad\text{in $D\oi$}.
\end{align}
Suppose also that $a_nb_n/\sqrt n\to c\in\ooo$.
Then
\begin{align}
  \label{tn2b}
a_n \bigpar{X\nn\xpar{t}-b_n f(t)}
\dto Z(t):=\iZ(t) +cf'(t)\BR(t)
\qquad\text{in $D\oi$},
\end{align}
where $(\BR(t))_{t\in\oi}$ is a Brownian bridge independent of $(\iZ(t))_{t\in\oi}$.

Moreover, 
$Z(t)$ is also a continuous Gaussian process on $\oi$, 
it has mean $\E Z(t)=\E \iZ(t)$, and
covariance function
\begin{align}\label{tn2c}
  \Cov(Z(s),Z(t)) = \Cov(\iZ(s),\iZ(t)) + c^2s(1-t)f'(s)f'(t),
\qquad 0\le s\le t\le 1.
\end{align}
\end{theorem}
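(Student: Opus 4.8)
The plan is to run the argument in the proof of \refT{TN} backwards: now the assumed \dtime{} convergence \eqref{tn2a} is the input, and the \ctime{} statement is recovered through the coupling \eqref{aa1}, with the roles of $\tau_k$ and $N\nn(t)$ interchanged. As a first step I would record that $(\iX\nn_k)_k$ is independent of the collection of uncovering times, hence of $(N\nn(t))_t$; therefore the hypothesis \eqref{tn2a} combines with the known empirical-process limit \eqref{bra} to give the \emph{joint} convergence
\begin{align*}
\bigpar{a_n\bigpar{\iX\nn_{\fnt}-b_nf(t)},\,n\qqw\bigpar{N\nn(t)-nt}}\dto\bigpar{\iZ(t),\BR(t)}
\quad\text{in }D\oi,
\end{align*}
where the two coordinates $\iZ$ and $\BR$ are \emph{independent} and $\BR$ is a Brownian bridge. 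As in the proof of \refT{TN} I would first reduce to $a_n=1$ by rescaling $\iX\nn_k,X\nn(t),b_n$, so that $a_nb_nn\qqw\to c$ becomes $b_nn\qqw\to c$, and then invoke the Skorohod coupling theorem \cite[Theorem~4.30]{Kallenberg} to assume the joint convergence holds a.s., that is, uniformly for $t\in\oi$,
\begin{align*}
\iX\nn_{\fnt}=b_nf(t)+\iZ(t)+o(1),
\qquad
N\nn(t)=nt+n\qq\BR(t)+o\bigpar{n\qq}.
\end{align*}

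The key step is to feed the second expansion into the first through \eqref{aa1}. Since $N\nn(t)$ is integer-valued, setting $u_n:=N\nn(t)/n\in\oi$ gives $\floor{nu_n}=N\nn(t)$, so $X\nn(t)=\iX\nn_{N\nn(t)}=\iX\nn_{\floor{nu_n}}$. The first a.s.\ expansion, evaluated at the random, $t$-dependent argument $u_n$, then yields $X\nn(t)=b_nf(u_n)+\iZ(u_n)+o(1)$ uniformly in $t$, because that $o(1)$ is really $\sup_{u\in\oi}\bigabs{\iX\nn_{\floor{nu}}-b_nf(u)-\iZ(u)}\to0$ and so is unaffected by substituting $u_n\in\oi$. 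From $u_n=t+n\qqw\BR(t)+o\bigpar{n\qqw}=t+o(1)$ uniformly, uniform continuity of $\iZ$ gives $\iZ(u_n)=\iZ(t)+o(1)$, while a mean-value/Taylor expansion of the continuously differentiable $f$ gives $f(u_n)=f(t)+f'(t)(u_n-t)+o(1)(u_n-t)$. Multiplying by $b_n$ and using $b_nn\qqw\to c$ together with boundedness of $f'$ and $\BR$, all remainders are $o(1)$, whence
\begin{align*}
b_nf(u_n)=b_nf(t)+cf'(t)\BR(t)+o(1).
\end{align*}
Combining the two, $X\nn(t)-b_nf(t)\to\iZ(t)+cf'(t)\BR(t)=:Z(t)$ uniformly, which after undoing the rescaling is precisely \eqref{tn2b}, with $\BR$ independent of $\iZ$.

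Finally, $Z$ is centred Gaussian, being a fixed continuous linear functional of the Gaussian pair $(\iZ,\BR)$, and the covariance \eqref{tn2c} follows from the representation $Z=\iZ+cf'\BR$, the independence of $\iZ$ and $\BR$, and the Brownian-bridge covariance \eqref{brb}: the cross terms vanish and $c^2f'(s)f'(t)\Cov\bigpar{\BR(s),\BR(t)}=c^2f'(s)f'(t)s(1-t)$ for $0\le s\le t\le1$. The main obstacle is the middle paragraph: justifying that the uniform a.s.\ expansion of $\iX\nn_{\fnt}$ may legitimately be evaluated at the random argument $u_n=N\nn(t)/n$ with all error terms remaining uniformly $o(1)$, and controlling the Taylor remainder of $f$ uniformly in $t$. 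Once this uniform substitution is secured, the remainder is a routine expansion entirely parallel to \eqref{aa5}--\eqref{aa9}.
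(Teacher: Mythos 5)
Your proposal is correct and follows essentially the same route as the paper's proof: independence of $(\iX\nn_k)_k$ from $(N\nn(t))_t$ gives the joint convergence with independent limits, the Skorohod coupling theorem upgrades this to a.s.\ uniform convergence, and substituting $u_n=N\nn(t)/n$ into the discrete-time expansion together with a Taylor expansion of $f$ and $b_nn\qqw\to c$ yields \eqref{tn2b}. The step you flag as the main obstacle is justified exactly as you indicate (the $o(1)$ is a supremum over all of $\oi$, so it survives evaluation at any random argument), which is also how the paper handles it.
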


\begin{proof}
We argue as in the proof of \refT{TN}, and we may again assume $a_n=1$.
First, as noted above,  $(\iX\nn_k)_k$ and $(N\nn(t))_t$ are independent
for each $n$; hence,
the limits \eqref{tn2a} and \eqref{bra} hold jointly, with
  independent limits $(\iZ(t))_t$ and $(\BR(t))_t$.
Consequently, we may by the
Skorohod coupling theorem 
assume that both \eqref{tn2a} and \eqref{bra} hold a.s.,
and thus
\begin{align}
\iX\nn_\fnt&=b_n f(t)+ \iZ(t) + o(1),\label{ab3}
\\
N\nn(t)&=nt+n\qq \BR(t)+o\bigpar{n\qq},\label{ab4}
\end{align}
uniformly for $t\in\oi$.
Consequently, a.s.,
\begin{align}
 n\qw N\nn(t)&=t+n\qqw \BR(t)+o\bigpar{n\qqw},\label{abc4}
\end{align}
and,
using \eqref{aa1}, the uniform continuity of $\iZ(t)$, and the
continuous differentiability of $f$,
\begin{align}
  X\nn(t)&
=\iX\nn_{N\nn(t)}
=b_n f\bigpar{n\qw N\nn(t)} + \iZ\bigpar{n\qw N\nn(t)}+o(1)
\notag\\&
=b_n f(t) + b_n  f'(t)n\qqw \BR(t)+o\bigpar{b_nn\qqw} + \iZ\bigpar{t}+o(1)
\end{align}
uniformly for $t\in\oi$,
which yields \eqref{tn2b}.

The covariance formula \eqref{tn2c} follows from \eqref{brb} and the
independence of $(\iZ(t))_t$ and $(\BR(t))_t$.
\end{proof}

\begin{remark}
\refTs{TN} and \ref{TN2} extend to vector-valued $\iX_k$ and $X(t)$, 
\emph{mutatis mutandis};
we may assume that all $\iX_k$ and $X(t)$
take their values in $\bbR^q$ for some fixed
$q\ge1$ (not depending on $k$ or $n$), and that then also $f(t)$ and $Z(t)$ or
$\iZ(t)$ take
their values in $\bbR^q$. 
\end{remark}

\begin{remark}\label{RGauss}
We have assumed in \refT{TN}
that $(Z(t),W(t))$ is a Gaussian process, since this is  the case  we use.
The theorem is valid (with the same proof) for any continuous stochastic process
$(Z(t),W(t))$, except that (of course)
$\iZ(t)$ then is not necessarily Gaussian, and that \eqref{tnc} requires
that the processes have finite variances.

Similarly, \refT{TN2} holds for any continuous stochastic process
$\iZ(t)$, with corresponding modifications.
\end{remark}

\section{On degree distributions in some random trees}\label{Atrees}
We apply our main results to several classes of random trees in examples in
\refS{Sex}. In order to do so, we have to verify the condition \eqref{ta2},
which says that the 
average of the squared degrees of the vertices converges to $\chix$ in
probability.
While this is closely related to known results on the degree distribution in
the random trees, we do not know any references stating precisely this result.
However, in this appendix we show that for the random trees considered in
\refS{Sex}, 
\eqref{ta2} easily follows from known results.
We also show that the condition \eqref{ta3} on the maximum degree holds for
these trees.

Throughout this section, we assume that $G\nn$ ($n\ge1$) is some sequence of
random trees with $V(G\nn)=[n]$.
Let $D_n$ be the degree of a 
random vertex of $G\nn$.
Then \eqref{ta2} can be written
\begin{align}
  \label{tay}
\E\bigpar{D_n^2\mid G\nn} \pto\chix.
\end{align}

Our uncovering problem and the results for it in \refS{Smain} are stated for
unrooted graphs and trees, but 
they can of course be applied also to rooted trees by regarding them as
unrooted,  forgetting the choice of root.
Indeed, most of our examples of random trees in \refS{Sex} 
consider rooted trees.
For rooted trees, it is usually more convenient to consider outdegrees.
We assume (without loss of generality)
that the root is vertex 1, and we denote the outdegree of vertex
$i$ by $\hd_i$.
Thus 
\begin{align}\label{bb1}
  d_i=\hd_i+\indic{i\neq 1}.
\end{align}
Similarly,
we let $\hD_n$ denote the outdegree of a 
random vertex of $G\nn$.
We then have the following simple reformulations of \eqref{ta2} and \eqref{tay}.
\begin{lemma}
  Suppose that $G\nn$ is a sequence of rooted trees with $V(G\nn)=[n]$, and
  suppose that \eqref{ta3} holds. Then \eqref{ta2} is equivalent to
  \begin{align}
    \label{htax}
\hchi\nn:=
\frac{1}{n}\sumin \bigpar{\hd\nn_i}^2\pto\hchix:=\chix-3,
  \end{align}
and thus also to
\begin{align}
  \label{htay}
\E\bigpar{\hD_n^2\mid G\nn} \pto\hchix.
\end{align}
\end{lemma}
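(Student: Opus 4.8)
The plan is to use the pointwise relation \eqref{bb1} between degrees and outdegrees and then sum the squares. Since $d_i\nn=\hd_i\nn$ for the root $i=1$ and $d_i\nn=\hd_i\nn+1$ for every other vertex, squaring and summing gives
\begin{align*}
  \sumin \bigpar{d_i\nn}^2
  = \sumin \bigpar{\hd_i\nn}^2 + 2\sum_{i\neq 1}\hd_i\nn + (n-1).
\end{align*}
For a tree the outdegrees sum to the number of edges, so $\sumin\hd_i\nn=n-1$, and hence $\sum_{i\neq 1}\hd_i\nn=(n-1)-\hd_1\nn$. Substituting this and dividing by $n$, I would obtain the exact identity
\begin{align*}
  \chi\nn = \hchi\nn + 3\,\frac{n-1}{n} - \frac{2\hd_1\nn}{n}.
\end{align*}

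The only term depending on the randomness in a delicate way is the root contribution $\hd_1\nn/n$. Since $\hd_1\nn\le\max_i d_i\nn$, condition \eqref{ta3} yields $\hd_1\nn/n\le n\qqw\bigpar{n\qqw\max_i d_i\nn}\pto0$; in fact $\max_i d_i\nn=\op(n)$ already suffices here. Together with $(n-1)/n\to1$ this gives $\chi\nn-\hchi\nn\pto3$, which is precisely the asserted equivalence: $\chi\nn\pto\chix$ holds if and only if $\hchi\nn\pto\chix-3=\hchix$, i.e.\ \eqref{ta2} is equivalent to \eqref{htax}.

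Finally, the passage to \eqref{htay} is immediate and exactly parallels the equivalence of \eqref{ta2} and \eqref{tay}: since $\hD_n$ is the outdegree of a uniformly chosen vertex of $G\nn$, one has $\E\bigpar{\hD_n^2\mid G\nn}=\frac1n\sumin(\hd_i\nn)^2=\hchi\nn$, so \eqref{htax} and \eqref{htay} are literally the same statement. I do not anticipate any real obstacle: the argument reduces to the one-line algebraic identity above combined with the elementary fact $\sumin\hd_i\nn=n-1$ for trees, and the hypotheses enter only through the harmless estimate $\hd_1\nn/n\pto0$, for which \eqref{ta3} is much more than enough.
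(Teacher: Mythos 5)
Your proof is correct and follows essentially the same route as the paper: expand $\sumin (d\nn_i)^2$ via $d_i=\hd_i+\indic{i\neq1}$, use $\sumin\hd\nn_i=n-1$, control the root term $\hd\nn_1/n$ by \eqref{ta3}, and divide by $n$. The identity $\E\bigpar{\hD_n^2\mid G\nn}=\hchi\nn$ disposing of \eqref{htay} is likewise exactly what the paper intends.
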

\begin{proof}
By \eqref{bb1}, we have, using $\sumin\hd_i\nn=n-1$ and \eqref{ta3},
\begin{align}
  \sumin \bigpar{d\nn_i}^2&
=\sumin \Bigpar{\bigpar{\hd\nn_i}^2+2\hd\nn_i+1}
-2\hd\nn_1-1
\notag\\&
=\sumin \bigpar{\hd\nn_i}^2+2(n-1)+n
-2\hd\nn_1-1
\notag\\&
=\sumin \bigpar{\hd\nn_i}^2+3n+o(n).
\end{align}
The result follows by dividing by $n$.
\end{proof}

\subsection{Preliminaries}
We let $\cP(\bbN)$ be
the space of probability distributions on $\bbN=\set{0,1,\dots}$. 
We give $\cP(\bbN)$ the standard weak topology (for the space of
probability measures on any metric space), see \eg{} \cite{Billingsley}.
Thus, since $\bbN$ is discrete,
if $X_n$ ($n\ge1$) and $X$ are random variables with values in $\bbN$, then
convergence in $\cP(\bbN)$ of the
distributions 
\begin{align}
  \label{bb00}
\cL(X_n)\to\cL(X)
\end{align}
means
\begin{align}\label{bb01}
  \E f(X_n)\to\E f(X)
\qquad\text{for every bounded function $f:\bbN\to\bbR$}.
\end{align}
It is well known \cite[Theorem 5.6.4]{Gut}
that, again since $\bbN$ is discrete,
this is equivalent both to 
convergence of point probabilities 
\begin{align}\label{bb02}
\P(X_n=d)\to\P(X=d)  
\qquad\text{for every $d\in\bbN$},
\end{align}
and 
also to convergence in total variation:
\begin{align}\label{bb03}
\sum_{d=0}^\infty\bigabs{\P(X_n=d)-\P(X=d)}\to0.  
\end{align}

Consider now a sequence $G\nn$ of random trees, as usual with 
$V(G\nn)=[n]$. 
Then the conditional distribution $\cL(D_n\mid G\nn)$ is a random
distribution on $\bbN$, \ie. a random element of $\cP(\bbN)$.
The equivalence of \eqref{bb00}--\eqref{bb03} above 
transfers to convergence in probability of random distributions in $\cP(\bbN)$,
and in particular, in our situation, we have the following
(note that  the \rhs{} of \eqref{bp00} is a constant element of
$\cP(\bbN)$):
\begin{lemma}\label{LBB0}
For any random variable $\zeta\in\bbN$, the following are equivalent:
\begin{align}  \label{bp00}
\cL\bigpar{D_n\mid G\nn}&\pto \cL(\zeta), 
\\\label{bp01}
  \E\bigpar{f(D_n)\mid G_n}&\pto\E f(\zeta)
\quad\text{for every bounded $f:\bbN\to\bbR$},
\\  \label{bp02}
\P\bigpar{D_n=d\mid G\nn}&\pto\P(\zeta=d)  
\quad\text{for every fixed $d\in\bbN$},
\\ \label{bp03}
\sum_{d=0}^\infty\bigabs{\P(D_n=d\mid G\nn)&-\P(\zeta=d)}\pto0.  
\end{align}
If\/ $G\nn$ are rooted trees, we also have the same equivalences with $\hD_n$
instead of $D_n$.
\end{lemma}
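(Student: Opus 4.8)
The plan is to deduce everything from the deterministic equivalence of \eqref{bb00}--\eqref{bb03} (\cite[Theorem 5.6.4]{Gut}) by means of the standard subsequence characterization of convergence in probability (see \eg{} \cite{Kallenberg}): if $Y_n$ are random elements of a metric space (here the Polish space $\cP(\bbN)$ with the weak topology, or simply $\bbR$) and $y$ is a fixed point, then $Y_n\pto y$ if and only if every subsequence has a further subsequence along which $Y_n\to y$ \as. Since \as{} convergence is a pathwise (topological) notion, this lets me apply the deterministic equivalences pointwise in $\go$. Throughout, write $\mu_n:=\cL(D_n\mid G\nn)$, a random element of $\cP(\bbN)$.

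Several implications are immediate. From \eqref{bp03} we get \eqref{bp01}, since $\bigabs{\E(f(D_n)\mid G\nn)-\E f(\zeta)}\le\norm{f}_\infty\sum_d\bigabs{\P(D_n=d\mid G\nn)-\P(\zeta=d)}$ for bounded $f$, and the \rhs{} tends to $0$ in probability; taking $f=\indic{\cdot=d}$ then gives \eqref{bp01}$\Rightarrow$\eqref{bp02}. Also \eqref{bp03}$\Rightarrow$\eqref{bp00}, because convergence in total variation is stronger than weak convergence (the identity map from the stronger to the weaker topology is continuous, hence preserves convergence in probability); and \eqref{bp00}$\Rightarrow$\eqref{bp02}, since the evaluation $\mu\mapsto\mu(\set d)$ is continuous on $\cP(\bbN)$ (it tests against the bounded continuous function $\indic{\cdot=d}$), so the continuous-mapping theorem for convergence in probability applies.

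It therefore suffices to prove \eqref{bp02}$\Rightarrow$\eqref{bp03}, which closes the cycle; this is the only step with real content. The obstacle is to rule out escape of mass: convergence in probability of $\P(D_n=d\mid G\nn)$ for each individual $d$ does not by itself control the tail $\sum_{d\ge D}$. I would handle it by the subsequence principle together with a diagonal extraction. Given any subsequence, use \eqref{bp02} to extract, successively for $d=0,1,2,\dots$ and by diagonalization, a single further subsequence along which $\P(D_n=d\mid G\nn)\to\P(\zeta=d)$ \as{} \emph{simultaneously} for all $d\in\bbN$ (possible since there are only countably many $d$). Along this subsequence, for a.e.\ $\go$ we have pointwise convergence of the probability mass functions, and since both $\mu_n$ and $\cL(\zeta)$ are genuine probability measures on $\bbN$, the deterministic implication \eqref{bb02}$\Rightarrow$\eqref{bb03} (Scheff\'e's lemma) gives $\sum_d\bigabs{\P(D_n=d\mid G\nn)-\P(\zeta=d)}\to0$ for a.e.\ $\go$. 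Thus \eqref{bp03} holds a.s.\ along the sub-subsequence, and the subsequence principle yields \eqref{bp03} in probability.

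Finally, the statement with $\hD_n$ in place of $D_n$ needs no new argument: $\hD_n$ is again an $\bbN$-valued random variable whose conditional law given $G\nn$ is a random element of $\cP(\bbN)$, so the identical reasoning applies verbatim.
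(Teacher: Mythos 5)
Your proof is correct and follows exactly the route the paper takes: the paper's own proof is a one-line remark that the lemma ``follows easily from the equivalence of \eqref{bb00}--\eqref{bb03}'' via the fact that convergence in probability reduces to a.s.\ convergence along subsequences, with the details omitted. You have simply supplied those omitted details (the easy implications, the diagonal extraction making \eqref{bp02} hold a.s.\ simultaneously for all $d$, and Scheff\'e's lemma pointwise in $\go$), all of which are handled correctly.
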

\begin{proof}
Using the fact that (in any metric space) a sequence converging in
probability has a subsequence converging a.s.,
  this follows easily from the 
 equivalence of \eqref{bb00}--\eqref{bb03};
we omit the details.
\end{proof}

We state a simple lemma in a general form; 
recall that $\gD\nn$ denotes the maximum degree in $G\nn$.
We are mainly interested in the case
$f(x)=x^2$;  note that in this case, 
\eqref{lbb3} and \eqref{lbb4} are equivalent to
\eqref{ta2} and \eqref{ta3}, with $\chix=\E f(\zeta)=\E \zeta^2$.

\begin{lemma}\label{LBB}
Let $\zeta$ be a random variable with values in $\bbN$
and  let $f:\bbN\to\ooo$ be any function such that $\E f(\zeta)<\infty$.
If, as \ntoo,
\begin{align}\label{lbb1}
  \cL(D_n\mid G\nn)\pto \cL(\zeta)
\end{align}
and
\begin{align}\label{lbb2}
\E f(D_n)\to \E f(\zeta),  
\end{align}
then
\begin{align}\label{lbb3}
\E(f(D_n)\mid G\nn)&\pto \E f(\zeta),
\\ \label{lbb4}
f(\gD\nn)/n&\pto0. 
\end{align}
If\/ $G\nn$ are rooted trees, we also have the same equivalences with $\hD_n$
instead of $D_n$
and $\hgD\nn:=\max_i\hd\nn_i\ge \gD\nn-1$ instead of $\gD\nn$.
\end{lemma}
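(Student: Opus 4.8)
The plan is to reduce both assertions to the bounded-test-function case already contained in \refL{LBB0}, and then to control the contribution of large degrees using the convergence of \emph{unconditional} means in \eqref{lbb2}. Throughout I write $\mu:=\E f(\zeta)$ and use the elementary observation that, since $D_n$ is the degree of a uniformly chosen vertex of $G\nn$, the conditional expectation is just the empirical average of $f$ over the degrees,
\begin{align}
  \E\bigpar{f(D_n)\mid G\nn}=\frac1n\sumin f(d_i\nn),
\end{align}
whose unconditional mean is exactly $\E f(D_n)$. Both conclusions are statements about this empirical average and its single largest term.

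For \eqref{lbb3} I would truncate. Put $f_K:=\min(f,K)$, which is bounded, so by \refL{LBB0} (equivalence of \eqref{bp00} and \eqref{bp01}) the hypothesis \eqref{lbb1} gives $\E(f_K(D_n)\mid G\nn)\pto\E f_K(\zeta)$ for each fixed $K$, and taking expectations (bounded convergence) also yields $\E f_K(D_n)\to\E f_K(\zeta)$. The nonnegative remainder $\E\bigpar{(f-f_K)(D_n)\mid G\nn}$ then has unconditional mean $\E f(D_n)-\E f_K(D_n)$, which by \eqref{lbb2} converges to $\mu-\E f_K(\zeta)$; since $f\ge0$ and $\E f(\zeta)<\infty$, monotone convergence makes this limit arbitrarily small by taking $K$ large. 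Combining, for each $K$, the convergence of the truncated part with a Markov bound on the small-mean remainder, and then letting $K\to\infty$, gives $\E(f(D_n)\mid G\nn)\pto\mu$.

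For \eqref{lbb4} the key point is that a vertex realizing the maximum degree contributes the single term $f(\gD\nn)$ to the empirical sum, so that for every $K$
\begin{align}
  \frac1n f(\gD\nn)\indic{f(\gD\nn)>K}
  \le\frac1n\sumin f(d_i\nn)\indic{f(d_i\nn)>K}
  =\E\bigpar{f(D_n)\indic{f(D_n)>K}\mid G\nn}.
\end{align}
The mean of the right-hand side is $\E\bigpar{f(D_n)\indic{f(D_n)>K}}$, and the pointwise bound $f(x)\indic{f(x)>K}\le 2\bigpar{f(x)-\min(f(x),K/2)}$ together with the mean computation of the previous paragraph shows this is $<\eps$ for all large $n$ once $K$ is fixed large enough. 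Markov's inequality then makes the right-hand side $\op(1)$ after such a choice of $K$, while on the complementary event $f(\gD\nn)\indic{f(\gD\nn)\le K}/n\le K/n\to0$; hence $f(\gD\nn)/n\pto0$. For rooted trees the same two arguments apply verbatim with $\hD_n$, $\hd_i\nn$, $\hgD\nn$ in place of $D_n$, $d_i\nn$, $\gD\nn$, using the outdegree version of \refL{LBB0}; the stated inequality $\hgD\nn\ge\gD\nn-1$ is immediate from \eqref{bb1} since $d_i\nn\le\hd_i\nn+1$.

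I expect the main obstacle to be precisely the passage from bounded to unbounded $f$: \refL{LBB0} only delivers convergence of conditional expectations against bounded test functions, so the entire content of the lemma is the verification that the mass of $f$ carried by large degrees is uniformly negligible. This is exactly where the \emph{convergence of unconditional means} \eqref{lbb2}, rather than mere distributional convergence \eqref{lbb1}, is indispensable — it supplies the uniform-integrability-type tail control that both closes the truncation estimate for \eqref{lbb3} and dominates the single extreme term in \eqref{lbb4}.
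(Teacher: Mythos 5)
Your proposal is correct and follows essentially the same route as the paper: truncate $f$ at level $M$, apply \refL{LBB0} to the bounded truncation, use \eqref{lbb2} together with monotone convergence to make the mean of the remainder $\E\bigpar{(f-f_M)(D_n)\mid G\nn}$ uniformly small, and control both \eqref{lbb3} and the single maximal term in \eqref{lbb4} by Markov's inequality, exploiting that the maximum-degree vertex carries weight at least $1/n$ in the empirical average. The only differences are cosmetic (your indicator $\indic{f>K}\le 2(f-f_{K/2})$ versus the paper's direct bound $f(\gD\nn)\le f(\gD\nn)-f_M(\gD\nn)+M$), and your closing remark correctly identifies \eqref{lbb2} as the source of the uniform-integrability-type tail control that the paper's proof also hinges on.
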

\begin{proof}
We use truncations. For $M\ge0$, let $f_M(x):=f(x)\bmin M$.
Then each $f_M$ is bounded, and thus by 
\eqref{lbb1} and \refL{LBB0},
\begin{align}\label{lbc1}
\E(f_M(D_n)\mid G\nn)\pto \E f_M(\zeta).
\end{align}
Furthermore, taking the expectation in \eqref{lbc1}, we obtain by dominated
convergence 
(for convergence in probability, see \eg{} \cite[Theorem 5.5.4]{Gut}), 
again because $f_M$ is bounded,
\begin{align}\label{lbc2}
\E f_M(D_n)\to \E f_M(\zeta).
\end{align}

We have $f_M(D_n)\le f(D_n)$ and thus, using Markov's inequality,
for any $\eps>0$.
\begin{align}
  \P\bigsqpar{\E(f(D_n)\mid G\nn)&-\E(f_M(D_n)\mid G\nn)>\eps}
\notag\\&=
  \P\bigsqpar{\E(f(D_n)- f_M(D_n)\mid G\nn)>\eps}
\notag\\&
\le \eps\qw \E\bigsqpar{\E(f(D_n)- f_M(D_n)\mid G\nn)}
\notag\\&
=
 \eps\qw \E\bigsqpar{f(D_n)- f_M(D_n)}.
\end{align}
Hence, by \eqref{lbb2} and \eqref{lbc2},
\begin{align}\label{lbc4}
&\limsup_\ntoo  \P\bigsqpar{\E(f(D_n)\mid G\nn)-\E(f_M(D_n)\mid G\nn)>\eps}
\notag\\&\qquad
\le 
\eps\qw \limsup_\ntoo\E\bigsqpar{f(D_n)- f_M(D_n)}
= \eps\qw \bigpar{\E f(\zeta)- \E f_M(\zeta)}.
\end{align}
By monotone convergence, 
\begin{align}
  \label{lbc5}
\E f_M(\zeta)\to \E f(\zeta)
\qquad \text{as $M\to\infty$},
\end{align}
and thus
\begin{align}\label{lbc6}
  \lim_{M\to\infty}\limsup_\ntoo  
\P\bigsqpar{\E(f(D_n)\mid G\nn)&-\E(f_M(D_n)\mid G\nn)>\eps}=0.
\end{align}
We have, for any $\eps>0$,
\begin{align}\label{lbc7}
\P\bigsqpar{\bigabs{\E(f(D_n)\mid G\nn)&-\E f(\zeta)}>3\eps}
\notag\\&
\le
\P\bigsqpar{\bigabs{\E(f(D_n)\mid G\nn)-\E(f_M(D_n)\mid G\nn)}>\eps}
\notag\\&\qquad\qquad+
\P\bigsqpar{\bigabs{\E(f_M(D_n)\mid G\nn)-\E f_M(\zeta)}>\eps}
\notag\\&\qquad\qquad+
\P\bigsqpar{\bigabs{\E f_M(\zeta)-\E f(\zeta)}>\eps}
\end{align}
Taking first the limsup as $\ntoo$, and then letting $\Mtoo$, 
we see 
from \eqref{lbc6}, \eqref{lbc1}, and \eqref{lbc5}
that the \rhs{} tends to 0, 
which yields \eqref{lbb3}.
(Alternatively, since convergence in probability to a constant is the same
as convergence in distribution to the same constant, 
\cite[Theorem 4.2]{Billingsley} shows that
\eqref{lbc5} and \eqref{lbc6}
enable us to let $M\to\infty$ in \eqref{lbc1},
which yields \eqref{lbb3}.)

Moreover, $\P\bigpar{D_n=\gD\nn\mid G\nn} \ge 1/n$, and thus,
for any $M\ge0$,
\begin{align}
  f(\gD\nn) \le  f(\gD\nn)-f_M(\gD\nn)+M
\le  n \E \bigpar{f(D_n)-f_M(D_n)\mid G\nn}+M.
\end{align}
Hence, for any $\eps>0$ and $M\ge0$, we have for $n>2M/\eps$,
\begin{align}
\P\sqpar{f(\gD\nn)/n>\eps}
\le \P\bigsqpar{\E \bigpar{f(D_n)-f_M(D_n)\mid G\nn}>\eps/2},
\end{align}
 and consequently, 
\begin{align}
\limsup_\ntoo
\P\sqpar{f(\gD\nn)/n>\eps}
\le 
\limsup_\ntoo\P\bigsqpar{\E \bigpar{f(D_n)-f_M(D_n)\mid G\nn}>\eps/2}.
\end{align}
This holds for every $M\ge0$, and thus \eqref{lbc6} shows that
\begin{align}
\lim_\ntoo \P\bigpar{f(\gD\nn)/n>\eps}=0
\end{align}
for every $\eps>0$, which is \eqref{lbb4}.

The proof for $\hD\nn$ is the same.
\end{proof}

\subsection{Conditioned Galton--Watson trees}\label{SSGW}

Let, as in \refE{EGW}, $G\nn$ be a \cGWt{} with $n$ vertices
defined with offspring distributed as a random variable  $\xi\in\bbN$ with
$\E\xi=1$ and 
$0<\Var\xi<\infty$.
Recall that this means that $G\nn$
is obtained by conditioning a \GWt{} $\cT$ on having exactly $n$ vertices,
where in $\cT$, every vertex has a number of children distributed as
independent copies of $\xi$;
see \eg{} \cite{SJ264}.
We denote the distribution of $\xi$ by $\cL(\xi)\in\cP(\bbN)$; this is known
as the offspring distribution. 

We regard the trees $G\nn$ and $\cT$ as rooted and ordered.

It is well known that the asymptotic degree distribution 
of $G\nn$ is the offspring distribution $\cL(\xi)$, \ie,
$\hD_n\dto\xi$ as \ntoo;
moreover, this holds also (in probability) conditioned on $G\nn$, \ie,
\begin{align}\label{gw1}
  \cL(\hD_n\mid G\nn)\pto \cL(\xi),
\end{align}
see  \cite[Theorem 7.11(ii)]{SJ264}.
(The notation is different there,
but \cite[Theorem 7.11(ii)]{SJ264} is equivalent to 
$\P(\hD_n=d\mid G\nn)\pto \P(\xi=d)$, which is equivalent to \eqref{gw1}
by \refL{LBB0}.)
Furthermore, let $f(x):=\binom x2 = x(x-1)/2$. Then,
\begin{align}\label{gw2}
  \E\bigpar{f(\hD_n)\mid G\nn}
= \frac{1}n\sumin \frac{\hd_i(\hd_i-1)}{2}.
\end{align}
Denote the sum in \eqref{gw2} by $\gU_n$, and note that if $\bt$ is the rooted
tree consisting of a root with two children,
then $\gU_n$ is the number of  subtrees of $G\nn$ that are isomorphic
to $\bt$, where we consider general subtrees, and consider each subtree as
rooted with the same parent-child relations as in $G\nn$. 
This number $\gU_n$
is treated (for general rooted and ordered trees $\bt$) in 
\cite[Theorem 1.1]{SJ355},
which shows, in particular,
that under our conditions $\E\xi=1$ and $\E\xi^2<\infty$, 
we have
\begin{align}
\frac{  \gU_n }n &\pto \E \frac{\xi(\xi-1)}{2} = \E f(\xi),
\intertext{and}
\E  \frac{\gU_n }n &\tend \E \frac{\xi(\xi-1)}{2}= \E f(\xi).
\end{align}
By \eqref{gw2}, these are precisely \eqref{lbb3} and \eqref{lbb2}, for
$\hD_n$,
with $\zeta=\xi$.
Since trivially $\E D_n=(n-1)/n\to 1=\E\xi$,
\eqref{lbb2} is trivial for $\hD_n$ and $f(x)=x$.
Hence, still taking $\zeta=\xi$,
it follows by linearity that \eqref{lbb2} holds also for $f(x)=x^2$, and
thus, recalling \eqref{gw1}, \refL{LBB} shows \eqref{lbb3} and \eqref{lbb4}
for $\hD_n$ and $f(x)=x^2$. 
By linearity again, we may also take $f(x)=(x+1)^2$ in
\eqref{lbb3}, which (using \eqref{lbb4})
easily is seen to be equivalent to \eqref{lbb3} for
$D_n$ with $f(x)=x^2$ and $\zeta=\xi+1$. 
Consequently, we have \eqref{ta2} and \eqref{ta3}, with
$\chix=\E(\xi+1)^2$. Since $\dx=2=\E(\xi+1)$, it follows that
\begin{align}
  \gamx:=\chix-\dx^2=\Var(\xi+1)=\Var\xi.
\end{align}

\begin{remark}\label{RGWr}
  It follows similarly from \cite{SJ355} 
(by taking $\bt$ as a star with a root of degree $k\le r$)
that for any integer $r\ge2$, 
if  $\E\xi^r<\infty$, then 
\begin{align}
\frac{1}{n}\sumin \hd_i^r&=  \E\bigpar{\hD_n^r\mid G\nn}\pto \E \xi^r,
\\
\frac{1}{n}\sumin d_i^r&=  \E\bigpar{D_n^r\mid G\nn}\pto \E (\xi+1)^r.
\end{align}
In other words, the moments of the (out)degree distribution converge to the
moments of $\xi$ or $\xi+1$, provided the latter moments are finite.
\end{remark}

\subsection{The random recursive tree}\label{SSRRT}
Let now $G\nn$ be a random recursive tree.
It is well known that
the asymptotic outdegree distribution is geometric $\Ge(1/2)$;
moreover, this holds conditioned on $G\nn$ in the sense \eqref{bp02},
see \eg{} 
\cite{MM1988}, 
\cite[Section 3.2]{Aldous-fringe}, 
\cite[Theorem 1]{SJ155},
\cite[Theorem 6.8]{Drmota},
and 
\cite[Example 6.1]{SJ306}.
Hence, \refL{LBB0} shows that \eqref{bp00}--\eqref{bp03} hold for $\hD_n$ with
$\zeta=\xi\sim\Ge(1/2)$; consequently they also hold for the total degree $D_n$
with $\zeta=\xi+1$.

We next verify \eqref{lbb2}, again first taking
$f(x):=\binom x2 = x(x-1)/2$. 
Since $G\nn$ is constructed with vertices added in increasing order,
$f(\hd_i)$ is the number of pairs $(j,k)$ with $i<j<k\le n$ such that
$ij$ and $ik$ are edges.
Hence, since $\P(j\sim i)=1/(j-1)$ for $j>i$, and these events for different
$j$ are independent,
\begin{align}
  n\E f(\hD_n)& 
= \sumin \E f(\hd_i)
= \sum_{1\le i<j<k\le n}\P(j\sim i \text{ and } k \sim i)
= \sum_{1\le i<j<k\le n}\frac{1}{(j-1)(k-1)}
\notag\\&
= \sum_{2\le j<k\le n}\frac{1}{k-1}
= \sum_{3\le k\le n}\frac{k-2}{k-1}
= n+O(\log n).
\end{align}
Consequently, $\E f(\hD_n)\to1$, which verifies \eqref{lbb2} since 
$\E f(\xi)=\frac12(\E\xi^2-\E\xi)=\frac12(\Var(\xi)+1-1)=1$.

We have shown \eqref{lbb2} with $f=x(x-1)/2$, and, 
as in \refSS{SSGW},
it follows by linearity
that
\eqref{lbb2} holds also for $f(x)=x^2$, and 
furthermore that \eqref{lbb2}--\eqref{lbb4} hold for $D_n$ and $f(x)=x^2$.
Hence, \eqref{ta2} and \eqref{ta3} hold, with $\chix=\E(\xi+1)^2$.

\begin{remark}
  A similar calculation shows that \eqref{lbb2} holds for $\hD_n$ and
  $f(x):=\binom xr$ for any integer $r\ge2$; it follows that \eqref{lbb2}
  holds also for $f(x)=x^r$, and thus \refL{LBB} shows that 
all moments of the outdegree distribution (given $G\nn$)
 converge in probability to the corresponding moments of $\xi\sim\Ge(1/2)$.
Hence, all moments of the  degree distribution $D_n$
 converge in probability to the corresponding moments of $\xi+1$.
\end{remark}

\newcommand\AAP{\emph{Adv. Appl. Probab.} }
\newcommand\JAP{\emph{J. Appl. Probab.} }
\newcommand\JAMS{\emph{J. \AMS} }
\newcommand\MAMS{\emph{Memoirs \AMS} }
\newcommand\PAMS{\emph{Proc. \AMS} }
\newcommand\TAMS{\emph{Trans. \AMS} }
\newcommand\AnnMS{\emph{Ann. Math. Statist.} }
\newcommand\AnnPr{\emph{Ann. Probab.} }
\newcommand\CPC{\emph{Combin. Probab. Comput.} }
\newcommand\JMAA{\emph{J. Math. Anal. Appl.} }
\newcommand\RSA{\emph{Random Structures Algorithms} }
\newcommand\DMTCS{\jour{Discr. Math. Theor. Comput. Sci.} }

\newcommand\AMS{Amer. Math. Soc.}
\newcommand\Springer{Springer-Verlag}
\newcommand\Wiley{Wiley}

\newcommand\vol{\textbf}
\newcommand\jour{\emph}
\newcommand\book{\emph}
\newcommand\inbook{\emph}
\def\no#1#2,{\unskip#2, no. #1,} 
\newcommand\toappear{\unskip, to appear}

\newcommand\arxiv[1]{\texttt{arXiv}:#1}
\newcommand\arXiv{\arxiv}

\newcommand\xand{and }
\renewcommand\xand{\& }

\def\nobibitem#1\par{}

\end{document}